\newcommand{\vertiii}[1]{{\left\vert\kern-0.25ex\left\vert\kern-0.25ex\left\vert #1 
    \right\vert\kern-0.25ex\right\vert\kern-0.25ex\right\vert}}
\newcommand{\llrrbrace}[1]{
  \left\{\mkern-6mu\left\{#1\right\}\mkern-6mu\right\}}
\newtheorem{lemma}{Lemma}[section]
\newtheorem{remark}{Remark}[section]
\newtheorem{theorem}{Theorem}[section]
\newtheorem{proposition}{Proposition}[section]
\theoremstyle{remark}
\let\oldremark\remark
\renewcommand{\remark}{\oldremark\normalfont}
\numberwithin{equation}{section}
\theoremstyle{remark}
\theoremstyle{definition}
\newtheorem*{definition}{Definition}
\begin{document}
\title[A posteriori error estimates for DG method to elasticity problem]{A posteriori error estimates for discontinuous Galerkin method to the elasticity problem
}
%
\author{LUONG Thi Hong Cam}\address{Department of Mathematics, University of Cergy-Pontoise, France}\email{thi-hong-cam.luong@u-cergy.fr}
\author{DAVEAU Christian}\address{Department of Mathematics, University of Cergy-Pontoise, France}\email{christian.daveau@u-cergy.fr}
\date{\today}
\begin{abstract} 
This work  concerns with  the discontinuous Galerkin (DG) method for the time-dependent linear elasticity problem. We  derive the {\em a posteriori} error bounds for semi-discrete and fully discrete problems, by making use of the {\em stationary elasticity reconstruction} technique which allows to estimate the error  for time-dependent problem through the error estimation of the associated stationary elasticity problem. To this end, to derive the error bound for the stationary problem, we present two  methods to obtain two different {\em a posteriori} error bounds, by $L^2$ duality technique and {\em via} energy norm.  For fully discrete scheme, we make use of the backward-Euler scheme and an appropriate space-time reconstruction. The technique  here can be applicable for a variety of DG methods as well.
\end{abstract}

\subjclass{65N30,65N15}
\keywords{finite element; discontinuous Galerkin; error analysis; a posteriori, time dependent problems; nonconforming method; elasticity; implicit time stepping;
reconstruction}
\maketitle
\section*{Introduction}

{\em A posteriori} error estimation and adaptivity recently  have become successful tools for efficient numerical computations.  Many adaptive methods have  already been widely considered for solving elliptic, parabolic and first order hyperbolic problems. For example, in~\cite{H-P-D} they consider  the {\em a posteriori} error estimates for Maxwell equation,  \cite{K-M} considers {\em a posteriori} for time dependent Stokes equation,\cite{G-L-V} investigates the {\em a posteriori} error estimate for parabolic problem.
Howerver, there are few results about {\em a posteriori} error analysis for the second order evolution problems; we mention, in particular,~\cite{B-S} and ~\cite{G-L-M} derive rigorous  {\em a posteriori} bounds for conforming finite element methods in case of fully discretization for wave equation. 

 An important class of  finite element methods goes
under the general term discontinuous Galerkin  (DG) methods which approximate the solutions by means of piecewise continuous functions.
In this work, we study the DG methods for elasticity problem with the derivation of {\em a posteriori} error bounds in the $L^\infty (L^2 )$ norm.
Concerning with the study of {\em a posteriori} error estimates for elasticity system, several different approaches have been developed for stationary problem: we refer to ~\cite{Ver} and \cite{C-H-H-X} for residual type error estimators and method based on dual variables, respectively; on the other hand, in~\cite{C-H-H-X}  and~\cite{Wihler} they consider  {\em a posteriori} error estimates  for isotropic problem.  In case of time-dependent problem, to our knowledge there is still no work for {\em a posteriori} error control for elasticity equation, so this is the first study about this. Inspired from the recent work of ~\cite{G-L-M} on {\em a posteriori} error estimates for finite element method for wave equation, we here propose an {\em a posteriori} bound in the $L^\infty(L^2)$ norm of the error  for the elasticity equations, and in case of DG method. The theory is developed for both space-discrete case, as well as for the  case of  fully discrete scheme with the implicit backward Euler method in time discretization. The main techniques for this derivation conclude the special testing procedure introduced by ~\cite{Baker}, and adapt the technique about elliptic reconstruction used in~\cite{M-N}, ~\cite{La-Mak} to stationary elasticity ({SE}) reconstruction. The {SE} reconstruction technique allows to estimate the error of the time-dependent problem {\em via} {\em an auxiliary SE equation}.

\section{Notations and approximation properties}
\subsection{Some functional spaces}
Throughout this paper, the usual Sobolev norm of $H^m$ on $E\subset {\mathbb R}^d$ is denoted by  $\|\cdot\|_{0,E}=\|v\|_{L^2(E)}$ , the standard space, norm, and inner product notation
are adopted, their definitions can be found for example in~\cite{A-F}.

For  the spaces of vector ${\bm v}=(v_i)_{1\leq i\leq d}, {\bm w}=(w_i)_{1\leq i\leq d} \in X(E)^d$ and  tensor fields  ${\bm \sigma}=(\sigma_{ij})_{1\leq i,j\leq d}, {\bm \tau}=(\tau_{ij})_{1\leq i,j\leq d }\in X(E)^{d\times d}$, we define the following scalar products:
$${\bm v } \cdot {\bm w}=\sum_{i=1}^d v_iw_i,~~~~~~~~{\bm \sigma}: {\bm \tau}=\sum_{i,j=1}^d \sigma_{ij}\tau_{ij};$$
and the norms 
$$|\bm v|=\sqrt{{\bm v } \cdot {\bm v}},~~~~|\bm \sigma|=\sqrt{{\bm \sigma } : {\bm \sigma}}.$$
In case  $\mbox{\boldmath$v$}$ is a vector-valued or matrix-valued function, the corresponding term $\|\mbox{\boldmath$v$}\|_{ s,E}$ is defined
in a similar manner. Here we use the same notations for the norms of vector-valued and matrix-valued.\\

Let $X$ is a Banach space, we will make use of the Bochner space $L^p (0,T; X)$, $1\leq p \leq \infty$
endowed with the norm
$$
\|\mbox{\boldmath$u$}\|_{L^p(0,T,X)}=\left \{
 \begin{array}{ll}
 (\int_0^T \|\mbox{\boldmath$u$}\|_{X}^p )^{1/p},~~&1 \leq p< \infty;\\
  {\rm ess~ sup}_{t\in [0,T]} \|\mbox{\boldmath$u$}\|_{X},~~&p=\infty.
 \end{array}
\right.
$$
\subsection{Triangulation}
Let ${\mathcal{T}}_h$ be a subdivision of  $\Omega$ into disjoint open
sets $\{K\}$ such that $\overline{\Omega}=\cup_{K\in {\mathcal{T}}_h} \overline{K}$, and assume that the mesh is {\em regular}, e.g. it has no hanging
nodes. Denote by $h_K:={\rm {diam}} (K)$ the diameter of element $K$ and by $h:=\max_{K\in {\mathcal T}_h} h_K$ the mesh size.

We assume that the triangulation to be {\em shape-regular}, i.e.  there exists a constant $\vartheta >0$ such that
$\vartheta:=\sup_{K\in {\mathcal T}_h}  \frac{h_K}{\rho_K} <\infty,$
here $h_K$ and $\rho_K$ denote the diameter of $K$ and the diameter of the largest ball inscribed in $K$, respectively.

The above assumptions  also imply that the mesh is {\em locally quasi-uniform}, that is, there is  a constant $\kappa >0$  such that
$\kappa \leq \frac{h_K}{h_{K'}} \leq \kappa^{-1},$
whenever $K$ and $K'$ share a common edge.
\subsection{Finite element spaces}
Given a partition or mesh ${\mathcal T}_h$ of $\Omega$, we define the following so-called broken Sobolev space for $s\geq 0$:
\begin{equation}
H^s({\mathcal T}_h)^d=\{{\bm v} \in L^2(\Omega)^d:{\bm v}|_K\in H^s(K)^d, ~K\in {\mathcal T}_h\},
\end{equation}
and endow it with the norm
$$\|{\bf v}\|_{s,{\mathcal T}_h}=\big (\sum_{K\in {\mathcal T}_h}\|{\bf v}\|_{s,K}^2\big )^{1/2}. $$
 
 Consider the DG finite element space
\begin{equation}\label{spacevh}
{\bf V}_h := \{\mbox{\boldmath$v$}\in L^2 (\Omega)^d : \mbox{\boldmath$v$}|_K  \in {\mathbb P}_r ({K})^d,~ K \in {\mathcal T}_h\},
\end{equation}
here   ${\mathbb P}_r ({K})$ denotes  the set  of polynomials of total degree at most $r$ on $K$:
\subsection{Averages and jumps}
Let ${\mathcal E}_h$ be a union of all faces (edges) of the triangulation ${\mathcal T}_h$, and denote by ${\mathcal{E}}_h={\mathcal{E}}_h^I\cup {\mathcal{E}}_h^B$ with ${\mathcal E}^I_h$ the union of all interior faces (edges) of the  triangulation ${\mathcal T}_h$,
 and  $\mathcal{E}_h^B$ being the set of all boundary faces.
Here we will generically refer to any element of $\mathcal{E}_h$ as a face, in both two and three
dimensions.\\

Let $K^+$ and $K^-$  be two adjacent elements of ${\mathcal T}_h$. Let $\bm x$ be an arbitrary point on the common side  $e=\partial K^+ \cap \partial K^-$. For a vector-valued function ${\bm q} \in H^s({\mathcal T}_h)^d,~s>\frac{1}{2}$, let us denote by $\bm q ^{\pm}$ the trace of $\bm q$ on $e$ from the interior of $K^{\pm}$. Then we define average and jump at ${\bm x} \in e$ as follows:

$$\llrrbrace{\mbox{\boldmath$q$}}=\frac{1}{2}(\mbox{\boldmath$q$}^ ++\mbox{\boldmath$q$}^-);\quad \llbracket{\mbox{\boldmath$q$}}\rrbracket=(\mbox{\boldmath$q$}^ +-\mbox{\boldmath$q$}^-).$$
If $e$ is a boundary side ($e \in  {\mathcal{E}}_h^B$), these definitions are modified to
$$\llrrbrace{\mbox{\boldmath$q$}}=\mbox{\boldmath$q$};~~~~\llbracket \mbox{\boldmath$q$}\rrbracket=\mbox{\boldmath$q$}.$$

Introducing the function $\sf h$ defined on $e\in {\mathcal E}_h$  by

 $${\sf h}|_e=\left \{
 \begin{array}{ll}
  \mbox{min} \{h_K,h_{K'}\},& \quad e \in {\mathcal{E}}^I_h,~~~~ e=\partial K\cap \partial K', \\
  h_K,&\quad e \in {\mathcal{E}}^B_h,~~~~ e=\partial K\cap \partial \Omega.
 \end{array}
 \right.
$$
\subsection{Auxiliary inequalities}
\begin{lemma}\label{invconstant}{\rm (Inverse inequality, cf.~\cite{Riv})}. Let $K$ be an element of ${\mathcal T}_h$. Then, there exists constant $C_{\rm{inv}}$  independent of $h_K$, $v$ but depend on the polynomial degree $r$ such that
\begin{enumerate}[i)]
\item
$\forall v\in {\mathbb P}_r(K), \forall e \in \partial K, \|v\|_{0,e}\leq C_{\rm{inv}}h_K^{-1/2}\|v\|_{0,K},$
\item $\forall v\in {\mathbb P}_r(K), \forall e \in \partial K, \|\nabla v\cdot {\bm \nu}_e\|_{0,e}\leq C_{\rm{inv}} h_K^{-1/2}\|\nabla v\|_{0,K}.$
\end{enumerate}
\end{lemma}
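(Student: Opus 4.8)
The plan is to establish both bounds by the classical scaling argument: transport the element $K$ onto a fixed reference element, use that all norms on the finite-dimensional polynomial space $\mathbb{P}_r$ are equivalent there, and then carry the constants back to $K$ with the help of the shape-regularity of $\mathcal{T}_h$.

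First I would fix $K\in\mathcal{T}_h$ and a face $e\subset\partial K$, and introduce the affine bijection $F_K(\widehat{x})=B_K\widehat{x}+b_K$ from a reference element $\widehat{K}$ onto $K$, so that $\widehat{e}:=F_K^{-1}(e)$ is a face of $\widehat{K}$; for $v\in\mathbb{P}_r(K)$ set $\widehat{v}:=v\circ F_K\in\mathbb{P}_r(\widehat{K})$. Since the trace operator $\widehat{v}\mapsto\widehat{v}|_{\widehat{e}}$ is a bounded linear map from the finite-dimensional space $\mathbb{P}_r(\widehat{K})$ into $L^2(\widehat{e})$, there is a constant $\widehat{C}=\widehat{C}(r,d,\widehat{K})$ with $\|\widehat{v}\|_{0,\widehat{e}}\leq\widehat{C}\,\|\widehat{v}\|_{0,\widehat{K}}$ for every $\widehat{v}\in\mathbb{P}_r(\widehat{K})$. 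Because $F_K$ is affine and $\widehat{e}$ is flat, the surface Jacobian of $F_K|_{\widehat{e}}$ equals the constant $|e|/|\widehat{e}|$, so $\|v\|_{0,e}^2=(|e|/|\widehat{e}|)\,\|\widehat{v}\|_{0,\widehat{e}}^2$, while $\|v\|_{0,K}^2=|\det B_K|\,\|\widehat{v}\|_{0,\widehat{K}}^2=(|K|/|\widehat{K}|)\,\|\widehat{v}\|_{0,\widehat{K}}^2$. Chaining the three identities yields
\[
\|v\|_{0,e}\;\leq\;\widehat{C}\Big(\tfrac{|\widehat{K}|}{|\widehat{e}|}\Big)^{1/2}\Big(\tfrac{|e|}{|K|}\Big)^{1/2}\|v\|_{0,K}.
\]
Finally $|e|\leq c_d\,h_K^{d-1}$ holds trivially (as $e$ lies in a ball of radius $h_K$), and shape-regularity gives $|K|\geq c_d'\,\rho_K^{d}\geq c_d'\,(h_K/\vartheta)^{d}$; therefore $(|e|/|K|)^{1/2}\leq C(d,\vartheta)\,h_K^{-1/2}$, and i) follows with $C_{\rm inv}$ depending only on $r$, $d$ and $\vartheta$.

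For part ii) I would avoid a second scaling argument and reduce to i): for each $j=1,\dots,d$ the partial derivative $\partial_j v$ belongs to $\mathbb{P}_{r-1}(K)\subset\mathbb{P}_r(K)$, so i) applies and gives $\|\partial_j v\|_{0,e}\leq C_{\rm inv}\,h_K^{-1/2}\|\partial_j v\|_{0,K}$. Since $|{\bm \nu}_e|=1$, the Cauchy--Schwarz inequality gives $|\nabla v\cdot{\bm \nu}_e|\leq|\nabla v|$ pointwise on $e$, so that $\|\nabla v\cdot{\bm \nu}_e\|_{0,e}^2\leq\sum_{j=1}^d\|\partial_j v\|_{0,e}^2\leq C_{\rm inv}^2\,h_K^{-1}\sum_{j=1}^d\|\partial_j v\|_{0,K}^2=C_{\rm inv}^2\,h_K^{-1}\|\nabla v\|_{0,K}^2$; taking square roots completes ii) after relabelling the constant.

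I expect the only genuine obstacle to be the bookkeeping in the first part: one must verify that shape-regularity is precisely what makes the geometric factor $(|e|/|K|)^{1/2}$ scale like $h_K^{-1/2}$ with a constant that depends on $r$, $d$, $\vartheta$ but not on $h_K$. For meshes containing non-simplicial elements one should in addition assume that the elements are affine images of finitely many reference shapes, so that $\widehat{C}$ and the ratio $|\widehat{K}|/|\widehat{e}|$ stay uniformly bounded over $\mathcal{T}_h$.
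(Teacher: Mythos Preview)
Your argument is correct and is the standard scaling proof of these inverse trace inequalities. Note, however, that the paper does not supply a proof at all: the lemma is simply quoted from Rivi\`ere's monograph (the ``cf.~\cite{Riv}'' in the statement), so there is no in-paper proof to compare against. Your write-up would therefore serve as a self-contained justification of a result the authors take as known.
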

By $|\cdot|_{j,K}$ we denote the semi-norm of $H^j(K)$. Then we also have:
\begin{lemma}\label{inv}{\rm (cf.~\cite{Riv})}.
There exists a constant $C$ depending only on the shape-regularity of the mesh, the approximation order $r$, and the dimension $d$
$$|v|_{j,K} \leq C h_k^{i-j}|v|_{i,K},\quad \forall v\in {\mathbb P}_r(K),~0\leq i\leq j\leq 2.$$
\end{lemma}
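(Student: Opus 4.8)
\textbf{Proof plan for Lemma~\ref{inv}.}
The statement is the classical local inverse (Markov-type) inequality on a shape-regular element, relating semi-norms of different orders on a polynomial space. The strategy is the standard scaling argument: transfer the estimate to a fixed reference element, where it reduces to a norm-equivalence statement on a finite-dimensional space, and then track how the various semi-norms transform under the affine pullback.

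First I would fix the reference simplex $\hat K$ and, for each $K\in{\mathcal T}_h$, write the affine map $F_K:\hat K\to K$, $F_K(\hat x)=B_K\hat x+b_K$, with $v=\hat v\circ F_K^{-1}$. On the finite-dimensional space ${\mathbb P}_r(\hat K)$ all norms are equivalent, so in particular there is a constant $\hat C=\hat C(r,d)$ with $|\hat v|_{j,\hat K}\le \hat C\,|\hat v|_{i,\hat K}$ for $0\le i\le j\le 2$ — here one uses that $i\le j$ only to the extent that the right-hand side controls the full $H^j$ norm on the reference element; actually since all semi-norms are comparable to $\|\hat v\|_{L^2(\hat K)}$ up to constants on a fixed finite-dimensional space, the inequality in both directions holds on $\hat K$, and the point of the lemma is precisely that after scaling only the stated direction survives with the right power of $h_K$. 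Then I would invoke the standard transformation estimates for Sobolev semi-norms under affine maps: $|v|_{m,K}\le C\,\|B_K^{-1}\|^m\,|\det B_K|^{1/2}\,|\hat v|_{m,\hat K}$ and $|\hat v|_{m,\hat K}\le C\,\|B_K\|^m\,|\det B_K|^{-1/2}\,|v|_{m,K}$, which are proved by the chain rule and a change of variables in the integral.

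Combining these three ingredients gives
\[
|v|_{j,K}\ \le\ C\,\|B_K^{-1}\|^{j}\,|\det B_K|^{1/2}\,|\hat v|_{j,\hat K}
\ \le\ C\,\|B_K^{-1}\|^{j}\,|\det B_K|^{1/2}\,|\hat v|_{i,\hat K}
\ \le\ C\,\|B_K^{-1}\|^{j}\,\|B_K\|^{i}\,|v|_{i,K}.
\]
It remains to bound $\|B_K\|$ and $\|B_K^{-1}\|$ in terms of $h_K$. The standard estimates $\|B_K\|\le h_K/\hat\rho$ and $\|B_K^{-1}\|\le \hat h/\rho_K$ (with $\hat h,\hat\rho$ the diameter and inscribed-ball diameter of $\hat K$) together with the shape-regularity bound $h_K/\rho_K\le\vartheta$ yield $\|B_K^{-1}\|^{j}\|B_K\|^{i}\le C\,h_K^{-j}h_K^{i}\cdot(h_K/\rho_K)^{j}\le C\,\vartheta^{j}\,h_K^{i-j}$, which is exactly the claimed factor $h_K^{i-j}$ with a constant depending only on $r$, $d$, and the shape-regularity $\vartheta$.

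The only mildly delicate point — and the one I would be most careful about — is the bookkeeping of which direction of the reference-element norm equivalence is being used and why $i\le j$ is the relevant range: on $\hat K$ one genuinely has $|\hat v|_{j,\hat K}\le \hat C|\hat v|_{i,\hat K}$ for $i\le j$ because every seminorm of a polynomial of bounded degree is controlled by any lower-order seminorm (indeed by the $L^2$ norm) on the fixed domain, so this inequality is not "differentiation decreasing a norm" but rather finite-dimensionality; the genuine scaling gain $h_K^{i-j}$ then comes entirely from the Jacobian factors. Everything else is routine application of the affine-transformation lemmas and shape-regularity, so I would not grind through those integrals in detail.
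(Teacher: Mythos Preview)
The paper does not supply its own proof of this lemma: it is simply quoted from Rivi\`ere's book with the tag ``cf.~[Riv]'' and no argument is given. Your scaling argument via the reference element is exactly the standard textbook proof (and is, in essence, what one finds in the cited reference), so there is nothing to compare against and your write-up is correct.

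One small remark on the ``mildly delicate point'' you flag: the reason the reference-element inequality $|\hat v|_{j,\hat K}\le \hat C\,|\hat v|_{i,\hat K}$ holds for seminorms (not just norms) is that $|\hat v|_{i,\hat K}=0$ forces $\hat v\in\mathbb{P}_{i-1}(\hat K)$, whence $|\hat v|_{j,\hat K}=0$ as well since $j\ge i$; so $|\cdot|_{i,\hat K}$ is a genuine norm on the quotient $\mathbb{P}_r(\hat K)/\mathbb{P}_{i-1}(\hat K)$ and finite-dimensional norm equivalence applies there. You essentially say this, but making the quotient explicit removes any ambiguity about why the seminorm on the right can serve as a controlling quantity.
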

\begin{lemma}\label{bestapp}({\rm cf. \cite{Riv}, Theorem 2.6}).
Let $K$ be an element of the triangulation. Let $v\in H^s(K)$ for $s\geq 1$ and let $r\geq 0$ be an integer. There exists a constant $C$ independent of $v$ and $h_K$ and a function ${\tilde v}\in {\mathbb P}_r(K)$ such that
$$\forall 0\leq q\leq s, \|v-{\tilde v}\|_{q,K}\leq Ch_K^{\mu-q}|v|_{s,K};$$
where $\mu={\min}(r+1,s)$ and $e \in \partial K$.
\end{lemma}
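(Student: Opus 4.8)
The plan is to transport the estimate to a fixed reference element by an affine change of variables and then invoke the Bramble--Hilbert lemma there. Let $\hat K$ be a fixed reference simplex and let $F_K\colon\hat K\to K$, $F_K(\hat{\bm x})=B_K\hat{\bm x}+{\bm b}_K$, be the affine map with $F_K(\hat K)=K$. Shape-regularity of ${\mathcal T}_h$ yields the usual bounds $\|B_K\|\le Ch_K$, $\|B_K^{-1}\|\le Ch_K^{-1}$, and $|\det B_K|\simeq h_K^d$. For $w\in H^j(K)$ put $\hat w:=w\circ F_K$; the chain rule together with these bounds gives the two-sided scaling relations
$$|\hat w|_{j,\hat K}\le C\|B_K\|^{j}|\det B_K|^{-1/2}|w|_{j,K},\qquad |w|_{j,K}\le C\|B_K^{-1}\|^{j}|\det B_K|^{1/2}|\hat w|_{j,\hat K},$$
valid for $0\le j\le s$.

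On $\hat K$ I would choose $\tilde{\hat v}:=Q\hat v$, where $Q\colon H^{\mu}(\hat K)\to{\mathbb P}_{\mu-1}(\hat K)$ is a fixed bounded linear projection that reproduces ${\mathbb P}_{\mu-1}(\hat K)$ (for instance an averaged Taylor polynomial of degree $\mu-1$, or an $L^2$-projection). Since $\mu-1\le r$ we have $Q\hat v\in{\mathbb P}_r(\hat K)$, so ${\tilde v}:=(Q\hat v)\circ F_K^{-1}\in{\mathbb P}_r(K)$ is admissible. Because $I-Q$ annihilates ${\mathbb P}_{\mu-1}(\hat K)$, for every $\hat p\in{\mathbb P}_{\mu-1}(\hat K)$ and every $0\le q\le\mu$,
$$\|\hat v-Q\hat v\|_{q,\hat K}=\|(I-Q)(\hat v-\hat p)\|_{q,\hat K}\le C\|\hat v-\hat p\|_{\mu,\hat K};$$
taking the infimum over $\hat p$ and applying the Deny--Lions/Bramble--Hilbert quotient-space estimate $\inf_{\hat p\in{\mathbb P}_{\mu-1}}\|\hat v-\hat p\|_{\mu,\hat K}\le C|\hat v|_{\mu,\hat K}$ gives $\|\hat v-Q\hat v\|_{q,\hat K}\le C|\hat v|_{\mu,\hat K}$ for $0\le q\le\mu$. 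When $\mu<q\le s$ (which occurs only if $s>r+1$, so $\mu=r+1$) one uses in addition that $|Q\hat v|_{j,\hat K}=0$ for $j\ge\mu$, so each seminorm of order $\mu\le j\le q$ of the remainder is controlled directly by $|\hat v|_{j,\hat K}\le\|\hat v\|_{s,\hat K}$.

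It then remains to scale back: applying the second scaling relation to $w=v-\tilde v$ and bounding the reference-element right-hand side by the first relation,
$$|v-\tilde v|_{q,K}\le C\|B_K^{-1}\|^{q}|\det B_K|^{1/2}\|\hat v-Q\hat v\|_{q,\hat K}\le Ch_K^{-q}|\det B_K|^{1/2}|\hat v|_{\mu,\hat K}\le Ch_K^{\mu-q}|v|_{\mu,K},$$
and since $\mu=\min(r+1,s)\le s$ one has $|v|_{\mu,K}\le\|v\|_{s,K}$; collecting the seminorm contributions of orders up to $q$ into the $H^q(K)$-norm yields the asserted bound $\|v-\tilde v\|_{q,K}\le Ch_K^{\mu-q}|v|_{s,K}$ for $0\le q\le s$.

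The main obstacle is the Bramble--Hilbert step on the reference element, namely the quotient-space inequality $\inf_{\hat p\in{\mathbb P}_{\mu-1}}\|\hat v-\hat p\|_{\mu,\hat K}\le C|\hat v|_{\mu,\hat K}$: this is where the genuine analysis lies, resting on the compactness of the embedding $H^{\mu}(\hat K)\hookrightarrow H^{\mu-1}(\hat K)$ and a standard compactness/contradiction argument (equivalently the Peetre--Tartar lemma). Everything else is bookkeeping, tracking the powers of $\|B_K\|$, $\|B_K^{-1}\|$ and $|\det B_K|$ and converting them into powers of $h_K$ through shape-regularity. A secondary point requiring care is the range $q>\mu$, where the exponent $\mu-q$ is negative and one must check that the order-$s$ data genuinely dominates the remainder.
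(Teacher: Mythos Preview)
The paper does not prove this lemma; it is quoted from Rivi\`ere's book (Theorem~2.6) without argument. Your proof---affine pullback to a fixed reference simplex, Bramble--Hilbert/Deny--Lions on $\hat K$, and scaling back via the norm equivalences for $B_K$---is precisely the classical argument given in that reference (and in Ciarlet's interpolation theory), and it is correct.

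One small point worth flagging: you finish with $|v|_{\mu,K}\le\|v\|_{s,K}$, i.e.\ the full $H^s$-norm on the right, whereas the statement as printed carries the seminorm $|v|_{s,K}$. When $\mu=s$ (that is, $s\le r+1$) the two coincide and there is no issue; but when $\mu=r+1<s$ the seminorm version actually fails (take $v\in\mathbb{P}_{s-1}(K)\setminus\mathbb{P}_r(K)$, so $|v|_{s,K}=0$ while $v-\tilde v\neq0$ for every $\tilde v\in\mathbb{P}_r(K)$). So your bound with $\|v\|_{s,K}$---or, sharper, with $|v|_{\mu,K}$---is the correct one, and the printed statement should be read that way.
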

We also have the following approximation properties (cf.~\cite{R-W-G}):
\begin{lemma}\label{bestapp2}
Let $K$ be an element of the triangulation with $h_K$ its diameter. Let $v\in H^s(K)$, let $r\geq 0$ be an integer. There exists a constant $C$ independent of $v$ and $h_K$ and a function ${\tilde v}\in {\mathbb P}_r(K)$ such that
\begin{subequations}\label{appv3}
\begin{alignat}{3}
\forall 0\leq q\leq s,& \|v-{\tilde v}\|_{q,K}\leq Ch_K^{\mu-q}\|v\|_{s,K},\quad s\geq 0,\\
\forall 0\leq q\leq s,& \|v-{\tilde v}\|_{0,e}\leq Ch_K^{\mu -1/2}\|v\|_{s,K},\quad s>1,\\
\forall 0\leq q\leq s, &\|v-{\tilde v}\|_{1,e}\leq Ch_K^{\mu -3/2}\|v\|_{s,K},\quad s>3/2,
\end{alignat}
\end{subequations}
where $\mu={\min}(r+1,s)$ and $e \in \partial K$.
\end{lemma}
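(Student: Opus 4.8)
The statement to be proved is Lemma~\ref{bestapp2}, which gives three approximation estimates: a bulk estimate in $H^q(K)$ norm, and two trace estimates on $e\in\partial K$ in the $L^2(e)$ and $H^1(e)$ norms respectively, all for a single polynomial approximant $\tilde v\in\mathbb P_r(K)$.

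The plan is to reduce everything to Lemma~\ref{bestapp} (the standard Bramble--Hilbert / Deny--Lions estimate on $K$) together with a scaled trace inequality. First I would fix $\tilde v$ to be precisely the polynomial supplied by Lemma~\ref{bestapp}, so that the bulk estimate
$\|v-\tilde v\|_{q,K}\le Ch_K^{\mu-q}|v|_{s,K}\le Ch_K^{\mu-q}\|v\|_{s,K}$
is immediate (replacing the seminorm by the full norm only weakens the bound, so the first inequality of the lemma follows at once, with $\mu=\min(r+1,s)$). The work is therefore entirely in the two trace estimates.

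For the trace estimates I would use the scaled multiplicative trace inequality on a shape-regular element: for $w\in H^1(K)$,
$\|w\|_{0,e}^2 \le C\big(h_K^{-1}\|w\|_{0,K}^2 + h_K\|w\|_{1,K}^2\big)$,
and more generally $\|w\|_{j,e}\le C\big(h_K^{-1/2}\|w\|_{j,K}+h_K^{1/2}|w|_{j+1,K}\big)$, obtained by pulling back to a reference element and tracking powers of $h_K$ via the shape-regularity constant $\vartheta$. Applying this with $w=v-\tilde v$ and $j=0$ gives
$\|v-\tilde v\|_{0,e}\le C\big(h_K^{-1/2}\|v-\tilde v\|_{0,K}+h_K^{1/2}\|v-\tilde v\|_{1,K}\big)$,
and then feeding in the bulk estimates from Lemma~\ref{bestapp} with $q=0$ and $q=1$ yields
$\|v-\tilde v\|_{0,e}\le C\big(h_K^{-1/2}h_K^{\mu}+h_K^{1/2}h_K^{\mu-1}\big)\|v\|_{s,K}=Ch_K^{\mu-1/2}\|v\|_{s,K}$,
which is the second estimate; the hypothesis $s>1$ is exactly what is needed so that the trace of $v$ on $e$ and the $H^1(K)$-bulk estimate make sense. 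The third estimate is identical with $j=1$: $\|v-\tilde v\|_{1,e}\le C\big(h_K^{-1/2}\|v-\tilde v\|_{1,K}+h_K^{1/2}\|v-\tilde v\|_{2,K}\big)\le Ch_K^{\mu-3/2}\|v\|_{s,K}$, using the bulk bound with $q=1,2$, and requiring $s>3/2$ so that $v|_e\in H^1(e)$ and the $H^2(K)$ bound is available.

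The main obstacle — really the only delicate point — is establishing the scaled trace inequality with the correct $h_K$-powers uniformly over the mesh. One must pass to a reference element $\hat K$ via an affine map, invoke the fixed trace inequality $\|\hat w\|_{j,\hat e}\le C\|\hat w\|_{j+1,\hat K}$ there, and then carefully transform norms back, where each spatial derivative contributes a factor controlled by $\|DF_K\|$ and $\|DF_K^{-1}\|$, both comparable to $h_K$ and $h_K^{-1}$ up to constants depending only on $\vartheta$ by shape-regularity; the surface measure on $e$ scales like $h_K^{d-1}$ versus $h_K^d$ on $K$, which produces the extra $h_K^{-1}$. A clean alternative is to cite this scaled trace inequality directly from a standard reference (e.g.~\cite{Riv} or~\cite{R-W-G}), in which case the proof is a two-line combination of that inequality with Lemma~\ref{bestapp}. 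I would also remark that the same argument for vector- and tensor-valued $v$ follows componentwise, consistent with the convention fixed earlier in this section.
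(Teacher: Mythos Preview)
The paper does not prove this lemma: it is stated with the preface ``We also have the following approximation properties (cf.~\cite{R-W-G})'' and no proof is given, so there is nothing to compare your argument against. Your approach --- take $\tilde v$ from Lemma~\ref{bestapp}, then combine the bulk estimates with the scaled trace inequality (which is exactly Lemma~\ref{tr} for the $L^2$ trace) --- is the standard route and is essentially how the cited reference \cite{R-W-G} proceeds.

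One small technical point: your argument for the third estimate invokes the bulk bound with $q=2$, which via Lemma~\ref{bestapp} requires $s\ge 2$, whereas the statement allows $3/2<s<2$. In that fractional range you cannot use $\|v-\tilde v\|_{2,K}$ directly; the clean fix is to carry out the trace estimate on the reference element $\hat K$ using the continuous trace map $H^s(\hat K)\to H^{s-1/2}(\hat e)\hookrightarrow H^1(\hat e)$ for $s>3/2$, apply the reference-element approximation bound, and then scale back. This is precisely the ``pass to a reference element'' route you already sketch as an alternative, so the gap is easily closed; just be aware that the integer-$q$ trace-plus-bulk combination does not cover the full stated range of $s$ without that refinement.
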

We also have the trace inequality:
\begin{lemma}\label{tr}{\rm (Trace inequality, cf.~\cite{K-Pb})}.
Let $K$ be an element of the triangulation, and denote by $h_K$ the diameter of $K$. Let $e \in \partial K$, the following inequality holds
$$\|v\|^2_{0,\partial K}\leq C_{\rm tr}(h_K^{-1}\|v\|_{0,K}^2+h_K\|\nabla v\|_{0,K}^2), \quad \forall v \in H^1(K),$$
here $C_{\rm tr}$ is independent of $h_K$ and $v$. 
\end{lemma}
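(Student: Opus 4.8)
The plan is to reduce the estimate to the fixed reference element by an affine change of variables, invoke the classical trace theorem there, and scale back, using shape-regularity to keep all constants uniform in $K$.

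First I would dispose of a density issue: it suffices to establish the bound for $v\in C^\infty(\overline K)$, since $C^\infty(\overline K)$ is dense in $H^1(K)$ and both sides of the asserted inequality are continuous with respect to the $H^1(K)$-norm (the left-hand side because the trace operator $H^1(K)\to L^2(\partial K)$ is bounded). Next, let $\hat K$ be the reference element and write the affine bijection $F_K(\hat x)=B_K\hat x+b_K$ with $F_K(\hat K)=K$. Shape-regularity (the constant $\vartheta$) yields $\|B_K\|\le C h_K$, $\|B_K^{-1}\|\le C\rho_K^{-1}\le C\vartheta h_K^{-1}$ and $c\,h_K^d\le|\det B_K|\le C\,h_K^d$, with $C,c$ depending only on $\hat K$ and $d$.

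The core computation is to track how the three norms transform under $v\mapsto\hat v:=v\circ F_K$. A change of variables in the volume integral gives $\|\hat v\|_{0,\hat K}^2=|\det B_K|^{-1}\|v\|_{0,K}^2\le C h_K^{-d}\|v\|_{0,K}^2$; from $\hat\nabla\hat v=B_K^{\top}(\nabla v)\circ F_K$ one gets $\|\hat\nabla\hat v\|_{0,\hat K}^2\le \|B_K\|^2|\det B_K|^{-1}\|\nabla v\|_{0,K}^2\le C h_K^{2-d}\|\nabla v\|_{0,K}^2$; and on a face $e\subset\partial K$, whose preimage $\hat e\subset\partial\hat K$ carries a surface Jacobian bounded by $C h_K^{d-1}$, $\|v\|_{0,e}^2\le C h_K^{d-1}\|\hat v\|_{0,\hat e}^2$, hence after summing over the faces $\|v\|_{0,\partial K}^2\le C h_K^{d-1}\|\hat v\|_{0,\partial\hat K}^2$. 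On the fixed Lipschitz domain $\hat K$ the standard trace theorem provides $\|\hat v\|_{0,\partial\hat K}^2\le\hat C\big(\|\hat v\|_{0,\hat K}^2+\|\hat\nabla\hat v\|_{0,\hat K}^2\big)$. Chaining these four inequalities,
\[
\|v\|_{0,\partial K}^2\le C h_K^{d-1}\big(h_K^{-d}\|v\|_{0,K}^2+h_K^{2-d}\|\nabla v\|_{0,K}^2\big)=C\big(h_K^{-1}\|v\|_{0,K}^2+h_K\|\nabla v\|_{0,K}^2\big),
\]
which is the claim with $C_{\rm tr}$ depending only on $\hat K$, $d$ and $\vartheta$.

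The only genuinely delicate point — the \emph{main obstacle} — is ensuring that $C_{\rm tr}$ does not depend on $K$: this is precisely where shape-regularity enters, via $\|B_K^{-1}\|\le C h_K^{-1}$; a degenerate thin element would blow this factor up and spoil the clean powers of $h_K$. As an alternative that sidesteps the reference element, for a simplicial $K$ one may pick a vector field $\psi$ with $\psi\cdot\nu\ge c>0$ on $\partial K$, $\|\psi\|_{L^\infty(K)}\le C h_K$ and $\|\div\psi\|_{L^\infty(K)}\le C$ (for instance $\psi(x)=x-x_K$ with $x_K$ the incenter), apply the divergence theorem to $|v|^2\psi$, and finish with Cauchy--Schwarz and Young's inequality together with $h_K\lesssim 1$; the same geometric information is used, only in a different guise.
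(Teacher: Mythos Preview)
Your main argument by affine scaling to the reference element is correct and is the standard proof of this estimate. The paper itself does not prove the lemma: it is stated with a citation to Karakashian--Pascal and used as a black box, so there is nothing to compare against beyond noting that your approach is the conventional one found in that literature.

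One small slip in your alternative sketch: with $\psi(x)=x-x_K$ and $x_K$ the incenter, you get $\psi\cdot\nu=\rho_K$ on each face, i.e.\ a lower bound of order $h_K$ (via shape-regularity), not a constant $c>0$ independent of $K$. Carrying that factor through, the divergence identity gives $c\,h_K\|v\|_{0,\partial K}^2\le d\,\|v\|_{0,K}^2+2h_K\|v\|_{0,K}\|\nabla v\|_{0,K}$, and after dividing by $h_K$ and applying Young's inequality you recover the claim directly, without any appeal to $h_K\lesssim 1$.
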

\begin{remark}
Note that analogous bounds as in the lemmas above can be also obtained for vector-valued function.
\end{remark}
\section{Mathematical model and DG formulation}
\subsection{Mathematical model}
We consider the stress tensor  ${\bm \sigma}({\bm u})=(\sigma_{ij} (\mbox{\boldmath$u$}))_{1\leq i,j \leq d}$,
which satisfies the constitutive relationship:
${\bm \sigma}({\bm u})= {\mathbb C}{\bm \varepsilon}({\bm u}),$ or equivalently 
$$ \forall 1 \leq i, j \leq d,
\sigma_{ij} (\mbox{\boldmath$u$}) =\sum_{k,l=1}^d
C_{ij kl}
\varepsilon_{kl}(\mbox{\boldmath$u$}),$$
where ${\mathbb C} = (C_{ij kl} )_{ij kl}$ is a fourth order tensor, independent of $t$ and satisfying some symmetry properties:
\begin{equation}\label{tensor1}
C_{ij kl} = C_{j ikl} = C_{ij lk} = C_{klij}.
\end{equation}
We assume that the stiffness tensor $\mathbb C$ is positive definite and piecewise constant in $\Omega$, then there exists a constant $c_*>0$ such that
\begin{equation}\label{tensor}\forall (\gamma_{ij} )_{ij} \neq 0,~~
 \sum_{1\leq i,j,k,l\leq d} C_{ij kl} \gamma_{kl} \gamma_{ij} \geq c_{*} \sum_{1\leq i,j\leq d} \gamma_{ij}^2  >0.
\end{equation}
\begin{remark}\label{C^*}
With piecewise constant tensor $\mathbb C$, there exists a positive constant $C^*$ such that $\|{\mathbb C}\mbox{\boldmath$\tau$}\|_{0,K} \leq C^*\|\mbox{\boldmath$\tau$}\|_{0,K} $  for any element $K\in {\mathcal T}_h$, and matrix $\mbox{\boldmath$\tau$} \in {L^2(\Omega)^{d\times d}}$.
\end{remark}
 We consider the equations of linear elasticity problem of finding the displacement vector ${{\bm u}}=(u_i({\bm x},t))_{i=1}^d $  at a point ${\bm x}$ in $\Omega$ and at a time $t\in [0,T]$  such that
\begin{equation}\label{eq1}
\left \{
\begin{array}{ll}
 \rho\partial^2 _{tt}{ u_i}-\sum_{j=1}^d \dfrac{\partial \sigma_{ij}(\mbox{\boldmath$u$})}{{\partial }{ x_j}} = f_i & \mbox{in}~  (0,T) \times \Omega,~~~~i=\overline{1,d},\\
 \mbox{\boldmath$u$}= {\bf 0} & \mbox{on}~ \partial \Omega \times (0,T],
\end{array} \right.
\end{equation}
 with initial conditions $\mbox{\boldmath$u$}_0 \in H^1_0(\Omega)^d$, and $\mbox{\boldmath$u$}_1 \in L^2(\Omega)^d$:
\begin{equation}\label{initialcon}
\left \{
\begin{array}{ll}
\mbox{\boldmath$u$} (\cdot, 0)= \mbox{\boldmath$u$}_0  &\mbox{on}~ \Omega \times \{0\},\\
\partial_t\mbox{\boldmath$u$} (\cdot, 0)= \mbox{\boldmath$u$}_1  &\mbox{on}~ \Omega \times \{0\}.
\end{array} \right.
\end{equation}
Here   ${\bm f}({\bm x},t)=(f_i({\bm x},t))_{i=1}^d \in L^2(0,T;L^2(\Omega)^d)$ is a general source function; and $\rho ({\bm x})$ is the mass density of the material.
For simplicity, we will take the coefficient $\rho=1$. 

Assume that the analytical solution ${\bm u}$ in~(\ref{eq1}) satisfies  the following variational formulation:
\begin{equation}\label{weak}
(\partial ^2_{tt}{{\bm u}},{{{\bm v}}} )_\Omega + a({{\bm u}},{{\bm v}}) = l({{\bm v}}),~ \forall \mbox{\boldmath$v$} \in H^1_0(\Omega)^d,~\mbox{ a.e. in}~ (0,T),
\end{equation}
where the bilinear form $a(\cdot,\cdot)$ is defined by
\begin{equation}
a({{\bm u}},{{\bm v}})=(\mbox{\boldmath$\sigma$}(\mbox{\boldmath$u$}), {\bm \varepsilon}({{\bm v}}))_\Omega=\int_\Omega \sum_{i,j=1}^d \sigma_{ij}(\mbox{\boldmath$u$}) \varepsilon_{ij}({{\bm v}}){\rm d}{\bm x},
\end{equation}
and the linear form $l(\cdot)$ is given by
\begin{equation}
l({{\bm v}})=\int_{\Omega}{\bm f} \cdot {{\bm v}}{\rm d}{\bm x}.
\end{equation}

\subsection{DG formulation}\begin{definition}\label{auhvinitial}
Define the bilinear form $a_h$ on $H^s({\mathcal T}_h)^d\times H^s({\mathcal T}_h)^d$, $s> 3/2$ by
\begin{equation}\label{ahuv}
\begin{split}
a_h(\mbox{\boldmath$u$},\mbox{\boldmath$v$} )&=\sum_{K\in {\mathcal{T}}_h} \int_{K}\mbox{\boldmath$\sigma$}({\bm u}):{\bm \varepsilon}({\bm v}){\rm d}{\bm x}-\sum_{e\in \mathcal{E}_h} \int_{e}\llrrbrace{(\mbox{\boldmath$\sigma$}({\bm u})) \mbox{\boldmath$\nu$}_e}\cdot\llbracket{\mbox{\boldmath$v$}}\rrbracket{\rm d}A\\
 &-\sum_{e\in \mathcal{E}_h} \int_{e}\llrrbrace{(\mbox{\boldmath$\sigma$}({\bm v})){\bm \nu}_e}\cdot\llbracket{\mbox{\boldmath$u$}}\rrbracket{\rm d}A+\sum_{e\in \mathcal{E}_h} \int_{e}{\texttt a}\llbracket{\mbox{\boldmath$u$}}\rrbracket\cdot\llbracket{\mbox{\boldmath$v$}}\rrbracket{\rm d}A;
 \end{split}
 \end{equation}
where $\mbox{\boldmath$\nu$}_e$ is an unit normal vector associated to the face $e$, oriented from $K^+$ to $K^-$.
\end{definition}
 In formulation~(\ref{ahuv}) above, the function {\texttt a} in the last terms penalies the jumps of ${\bm u}$ and ${\bm v}$ over the faces of ${\mathcal T}_h$; ${\texttt a}=\alpha {\sf h}^{-1}$ with $\alpha$ is a positive parameter independent of the local mesh sizes will be specified later to assure the coercivity of the bilinear form;
$\mbox{\boldmath$\sigma$}$ and $\mbox{\boldmath$\varepsilon$}$ are the stress tensor and symmetric gradient respectively, taken element-wise.

 The  semi-discrete DG approximation to~(\ref{weak}) then reads as follows: 
  Find ${{\bm u}_h}: [0,T] \rightarrow {\bf V}_h$ such that 
 \begin{equation}\label{DG1}
 ( \partial^2_{tt}{{\bm u}_h}, {{\bm v}})_\Omega + a_h({\bm u}_h,{\bm v} ) =( {\mbox{\boldmath$f$}},{{\bm v}} )_\Omega ~\mbox{for~all}~ {{\bm v}} \in {\bf V}_h, t \in (0,T],
 \end{equation}
with initial values
\begin{equation}\label{initialconditionuh}
\begin{split}
{{\bm u}_h}|_{t=0} &= {\sf \Pi}_h {{\bm u}}_0,\\
\partial_t{{\bm u}_h} |_{t=0}& = {\sf \Pi}_h {{\bm u}}_1,
\end{split}
\end{equation}
here ${\sf \Pi}_h$ is the orthogonal $L^2$-projection  onto ${\bf V}_h$, and the discrete bilinear form $a_h$ on
${\bf V}_h \times {\bf V}_h$ is given in~(\ref{ahuv}).

\begin{lemma}\label{coerah}{\rm (Well-posedness of the bilinear form $a_h$)}.
 Let the interior penalty parameter  be defined as in~(\ref{ahuv}). Then there are  positive constants $\kappa=\min\{\frac{c_*}{2},\frac{1}{2}\}$, $M=\max\{2, 2\alpha^{-1/2} C_{\rm{inv}}C^*+ 2C^*\} $, and $\alpha_{\rm {min}}=4C_{\rm inv}^2(C^*)^2c_*^{-1}$ independent of $h$ such that 
$$|a_h ({{\bm u}},{{\bm v}})| \leq M \vertiii{{\bm u}} \cdot \vertiii{{\bm v}},~~~\forall {{\bm u}},{{\bm v}} \in {\bf V}_h,$$
and for $\alpha \geq \alpha_{\rm {min}}$, we have that
$$a_h ({{\bm u}},{{\bm u}}) \geq \kappa \vertiii{{\bm u}}^2,~~~\forall {{\bm u}} \in {\bf V} _h,$$
with DG norm
 $$\vertiii{{\bm u}}=\big(\sum_{K \in {\mathcal{T}}_h}\|{\bm \varepsilon({{\bm u}})}\|_{0,K}^2+\sum_{e\in {\mathcal E}_h} \alpha {\sf h}^{-1} \|\llbracket{{{\bm u}}}\rrbracket\|^2_{0,e}\big)^{1/2}.$$
\end{lemma}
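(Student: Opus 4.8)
The plan is to prove the two assertions—boundedness and coercivity of $a_h$ on ${\bf V}_h$—separately, treating the four terms in the definition~\eqref{ahuv} in turn and bounding the face integrals via the inverse and trace inequalities of Lemma~\ref{invconstant}.

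\emph{Boundedness.} First I would apply Cauchy--Schwarz on each of the four terms. The volume term $\sum_K \int_K \mbox{\boldmath$\sigma$}({\bm u}):{\bm\varepsilon}({\bm v})$ is bounded by $C^*\sum_K \|{\bm\varepsilon}({\bm u})\|_{0,K}\|{\bm\varepsilon}({\bm v})\|_{0,K}$ using Remark~\ref{C^*}, hence by $C^*\vertiii{{\bm u}}\vertiii{{\bm v}}$. The penalty term $\sum_e \int_e {\texttt a}\llbracket{\bm u}\rrbracket\cdot\llbracket{\bm v}\rrbracket$ with ${\texttt a}=\alpha{\sf h}^{-1}$ is, after Cauchy--Schwarz on each face, bounded directly by $\big(\sum_e\alpha{\sf h}^{-1}\|\llbracket{\bm u}\rrbracket\|_{0,e}^2\big)^{1/2}\big(\sum_e\alpha{\sf h}^{-1}\|\llbracket{\bm v}\rrbracket\|_{0,e}^2\big)^{1/2}\le\vertiii{{\bm u}}\vertiii{{\bm v}}$. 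The two ``consistency'' terms are symmetric, so it suffices to bound $\sum_e\int_e\llrrbrace{\mbox{\boldmath$\sigma$}({\bm u})\mbox{\boldmath$\nu$}_e}\cdot\llbracket{\bm v}\rrbracket$. On each face I would insert the factor ${\sf h}^{1/2}{\sf h}^{-1/2}$, apply Cauchy--Schwarz to get $\big(\sum_e{\sf h}\|\llrrbrace{\mbox{\boldmath$\sigma$}({\bm u})\mbox{\boldmath$\nu$}_e}\rrbrace\|_{0,e}^2\big)^{1/2}\big(\sum_e{\sf h}^{-1}\|\llbracket{\bm v}\rrbracket\|_{0,e}^2\big)^{1/2}$, then use Lemma~\ref{invconstant}(i) applied to the polynomial components of $\mbox{\boldmath$\sigma$}({\bm u})$ together with the local quasi-uniformity to replace ${\sf h}^{1/2}\|\cdot\|_{0,e}$ by $C_{\rm inv}\|\mbox{\boldmath$\sigma$}({\bm u})\|_{0,K}\le C_{\rm inv}C^*\|{\bm\varepsilon}({\bm u})\|_{0,K}$, and $\big(\sum_e{\sf h}^{-1}\|\llbracket{\bm v}\rrbracket\|_{0,e}^2\big)^{1/2}\le\alpha^{-1/2}\vertiii{{\bm v}}$. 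Collecting, this term is at most $\alpha^{-1/2}C_{\rm inv}C^*\vertiii{{\bm u}}\vertiii{{\bm v}}$, and summing the four contributions gives the constant $M=\max\{2,\,2\alpha^{-1/2}C_{\rm inv}C^*+2C^*\}$ (the factor $2$ absorbing the bookkeeping over the two traces sharing each interior face).

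\emph{Coercivity.} Taking ${\bm v}={\bm u}$ in~\eqref{ahuv}, the first and last terms are the natural nonnegative quantities: by~\eqref{tensor}, $\sum_K\int_K\mbox{\boldmath$\sigma$}({\bm u}):{\bm\varepsilon}({\bm u})\ge c_*\sum_K\|{\bm\varepsilon}({\bm u})\|_{0,K}^2$, and $\sum_e\int_e{\texttt a}|\llbracket{\bm u}\rrbracket|^2=\sum_e\alpha{\sf h}^{-1}\|\llbracket{\bm u}\rrbracket\|_{0,e}^2$. The two mixed terms coincide (up to sign) and must be absorbed: $-2\sum_e\int_e\llrrbrace{\mbox{\boldmath$\sigma$}({\bm u})\mbox{\boldmath$\nu$}_e}\cdot\llbracket{\bm u}\rrbracket$. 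Using the estimate from the previous paragraph with ${\bm v}={\bm u}$, each such term is bounded in absolute value by $C_{\rm inv}C^*\big(\sum_K\|{\bm\varepsilon}({\bm u})\|_{0,K}^2\big)^{1/2}\big(\sum_e{\sf h}^{-1}\|\llbracket{\bm u}\rrbracket\|_{0,e}^2\big)^{1/2}$; then Young's inequality with a parameter $\delta>0$ splits this as $\tfrac{\delta}{2}\sum_K\|{\bm\varepsilon}({\bm u})\|_{0,K}^2+\tfrac{C_{\rm inv}^2(C^*)^2}{2\delta}\sum_e{\sf h}^{-1}\|\llbracket{\bm u}\rrbracket\|_{0,e}^2$. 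Choosing $\delta$ so that the $\|{\bm\varepsilon}({\bm u})\|^2$-loss from the two mixed terms is $\tfrac{c_*}{2}$ (i.e. $\delta=c_*/2$) and requiring $\alpha\ge\alpha_{\min}=4C_{\rm inv}^2(C^*)^2c_*^{-1}$ so that the penalty term retains at least half its mass, one obtains $a_h({\bm u},{\bm u})\ge\tfrac{c_*}{2}\sum_K\|{\bm\varepsilon}({\bm u})\|_{0,K}^2+\tfrac12\sum_e\alpha{\sf h}^{-1}\|\llbracket{\bm u}\rrbracket\|_{0,e}^2\ge\kappa\vertiii{{\bm u}}^2$ with $\kappa=\min\{c_*/2,\,1/2\}$.

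\emph{Main obstacle.} The only delicate point is the face-term estimate: one must apply the inverse inequality to $\mbox{\boldmath$\sigma$}({\bm u})\mbox{\boldmath$\nu$}_e$, which is legitimate because $\mathbb C$ is piecewise constant (Remark~\ref{C^*}), so $\mbox{\boldmath$\sigma$}({\bm u})$ is again a polynomial of degree $\le r$ on each $K$; and one must handle the average $\llrrbrace{\cdot}$ on interior faces, where the trace is taken from both sides—here local quasi-uniformity ($\kappa\le h_K/h_{K'}\le\kappa^{-1}$) ensures ${\sf h}|_e$ is comparable to $h_K$ for either neighbour, so the constant $C_{\rm inv}$ may be chosen uniformly. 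Everything else is bookkeeping to track the explicit constants $M$, $\kappa$, $\alpha_{\min}$ as stated.
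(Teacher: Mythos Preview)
The paper states Lemma~\ref{coerah} without proof (this is a standard well-posedness result for the symmetric interior penalty method), so there is no original argument to compare against. Your approach is precisely the standard one---Cauchy--Schwarz on each term, the inverse inequality of Lemma~\ref{invconstant} for the face integrals of $\mbox{\boldmath$\sigma$}({\bm u})\mbox{\boldmath$\nu$}_e$, and Young's inequality with parameter $\delta=c_*/2$ to absorb the consistency terms into the coercive ones---and it is correct. Your observation that piecewise constancy of $\mathbb C$ is what makes $\mbox{\boldmath$\sigma$}({\bm u})|_K$ a polynomial (so the inverse inequality applies) is exactly the right justification; the paper uses the same estimate elsewhere (see the proof of Lemma~\ref{liftingstable}). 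One minor remark on constant bookkeeping: when you sum $\sum_{e}{\sf h}\|\llrrbrace{\mbox{\boldmath$\sigma$}({\bm u})\mbox{\boldmath$\nu$}_e}\|_{0,e}^2$ over all faces and pass to element norms, each element contributes once per face it owns, so in principle a shape-regularity-dependent combinatorial factor (the number of faces per simplex) enters $C_{\rm inv}$; the paper absorbs this silently into $C_{\rm inv}$ as well, so your treatment is consistent with theirs.
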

\section{Semi-discrete error estimates}
\subsection{Stationary elasticity reconstruction}\label{statrecosemi}
In this section, we introduce the stationary reconstruction which is the main tool to derive the error of the time-dependent problem.
\begin{definition}\label{reconstw} { ({SE} reconstruction and error splitting)}. Let ${\bm u}_h$ be the  DG approximation given by~(\ref{DG1}). Let also ${\sf \Pi}_h : L^2 (\Omega)^d \rightarrow {\bf V}_h$ be the orthogonal $L^2$ -projection operator onto the
finite element space ${\bf V}_h$. We define the {SE} reconstruction ${\bm w}= {\bm w}(t) \in H^1_{0}(\Omega)^d$ of ${\bm  u}_h ={\bm   u}_h (t)$ at time $t \in [0, T ]$ as
the solution of the {SE} problem
\begin{equation}\label{recon-semi}
a({\bm w},\mbox{\boldmath$v$} ) = ( \mbox{\boldmath$g$}, \mbox{\boldmath$v$})_{\Omega} ~ \mbox{for all}~ \mbox{\boldmath$v$}\in H^1_{0}(\Omega)^d,
\end{equation}
where
\begin{equation}\label{gcon}
{\bm g} := {\bm   B} {\bm  u}_h - {\sf \Pi}_h \mbox{\boldmath$f$} + \mbox{\boldmath$f$},
\end{equation}
and ${\bm  B} : {\bf V}_h \rightarrow {\bf V}_h$ is the discrete  operator defined by
$$( {\bm  B}{\bm z}, {\bm v} )_\Omega = a_h({\bm z}, {\bm v} )~\mbox{for all} ~~~ {\bm v} \in {\bf V}_h,$$
for each ${\bf z} \in {\bf V}_h$.\\

We decompose the error as
\begin{equation}\label{errordecompose}
\mbox{\boldmath$e$} := {\bm  u}_h- \mbox{\boldmath$u$} = \mbox{\boldmath$\rho$}-{\bm \theta},
\end{equation}
where $\mbox{\boldmath$\theta$}:= {\bm w} -{\bm  u}_h$ and $\mbox{\boldmath$\rho$} := {\bm w}- \mbox{\boldmath$u$} \in H^1_0(\Omega)^d$.
\end{definition}
\begin{remark}({\rm Well-posedness of $\bm w$}).\label{rolew}
The {SE} reconstruction above
is well defined. Indeed, ${\bm   B} {\bm  u}_h \in {\bf V}_h$ is the unique $L^2-$Riesz representation of a linear
functional on the finite-dimensional space ${\bf V}_h$. And the existence and uniqueness of
the solution $\bm w$ in~(\ref{recon-semi}), with data ${\bm  B} {\bm  u}_h - {\sf \Pi}_h \mbox{\boldmath$f$} + \mbox{\boldmath$f$} \in L^2 (\Omega)^d$, follows from the Lax-
Milgram theorem.
\end{remark}
\begin{remark}\label{rolew22}({\rm The role of $\bm w$}).
Consider the {SE} problem of finding ${\bm w} \in {H^1_0}(\Omega)^d$ satisfying
\begin{equation}\label{eqg}
-\nabla \cdot ({\bm \sigma}({\bm w}))={\bm g},
\end{equation}
with ${\bm g}$ defined by~(\ref{gcon}).
Let ${\bm w}_h \in {\bf V}_h$ be the DG approximation to $\bm w$, defined by the
finite-dimensional linear system
$$a_h({\bm  w}_h, {\bm  v} ) = ( {\bm  B} {\bm  u}_h - {\sf \Pi}_h {\bm f} + {\bm f},{\bm  v} )_{\Omega},~\mbox{ for all}~~ {\bm  v} \in {\bf V}_h,$$
 this implies $a_h({\bm w}_h, {\bm  v} ) = ( {\bm B} {\bm u}_h, {\bm  v})_\Omega = a_h({\bm  u}_h, {\bm  v} )$ for all ${\bm  v} \in {\bf V}_h$, i.e.,
${\bm w}_h = {\bm  u}_h$. Therefore, the {SE} reconstruction $\bm w$ is the exact solution to the {SE}  problem~(\ref{eqg}) whose DG approximate solution is ${\bm u}_h$. 

\end{remark}

\subsection{Semi-discrete error relation}\label{errorrelationsemie}
\begin{lemma}\label{errorrelation}
  With reference to the notation of decomposition as in~(\ref{errordecompose}), the following error relation holds
  $$(\partial^2_{tt}\mbox{\boldmath$e$} , \mbox{\boldmath$v$})_\Omega + a(\mbox{\boldmath$\rho$},\mbox{\boldmath$v$}) = 0 ~\mbox{for all}~\mbox{\boldmath$v$} \in { H}^1_{0} (\Omega)^d.$$
\end{lemma}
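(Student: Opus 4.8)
The plan is to combine the continuous variational formulation~(\ref{weak}), the semi-discrete DG scheme~(\ref{DG1}), and the defining property of the {SE} reconstruction~(\ref{recon-semi}) to cancel all the discrete quantities and leave only the stated identity. The key point is that test functions must be taken in $H^1_0(\Omega)^d$, so we cannot simply subtract~(\ref{DG1}) from~(\ref{weak}); instead the {SE} reconstruction $\bm w$ acts as the bridge, because $a(\cdot,\cdot)$ and $a_h(\cdot,\cdot)$ agree on $H^1_0(\Omega)^d$ in the sense needed here.

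\textbf{Step 1: Rewrite the reconstruction identity using the operator $\bm B$.} Fix $\mbox{\boldmath$v$}\in H^1_0(\Omega)^d$. By Definition~\ref{reconstw}, $a(\bm w,\mbox{\boldmath$v$})=(\bm g,\mbox{\boldmath$v$})_\Omega$ with $\bm g = \bm B\bm u_h - {\sf \Pi}_h\mbox{\boldmath$f$}+\mbox{\boldmath$f$}$. The first move is to re-express $(\bm B\bm u_h,\mbox{\boldmath$v$})_\Omega$. Since $\mbox{\boldmath$v$}\notin{\bf V}_h$ in general, I will insert ${\sf \Pi}_h\mbox{\boldmath$v$}$: because $\bm B\bm u_h\in{\bf V}_h$ and ${\sf \Pi}_h$ is the $L^2$-orthogonal projection, $(\bm B\bm u_h,\mbox{\boldmath$v$})_\Omega=(\bm B\bm u_h,{\sf \Pi}_h\mbox{\boldmath$v$})_\Omega = a_h(\bm u_h,{\sf \Pi}_h\mbox{\boldmath$v$})$ by the definition of $\bm B$. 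Similarly $({\sf \Pi}_h\mbox{\boldmath$f$},\mbox{\boldmath$v$})_\Omega=(\mbox{\boldmath$f$},{\sf \Pi}_h\mbox{\boldmath$v$})_\Omega$. Hence $a(\bm w,\mbox{\boldmath$v$}) = a_h(\bm u_h,{\sf \Pi}_h\mbox{\boldmath$v$}) - (\mbox{\boldmath$f$},{\sf \Pi}_h\mbox{\boldmath$v$})_\Omega + (\mbox{\boldmath$f$},\mbox{\boldmath$v$})_\Omega$.

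\textbf{Step 2: Eliminate the discrete terms via the DG scheme.} Apply~(\ref{DG1}) with test function ${\sf \Pi}_h\mbox{\boldmath$v$}\in{\bf V}_h$: $a_h(\bm u_h,{\sf \Pi}_h\mbox{\boldmath$v$}) = (\mbox{\boldmath$f$},{\sf \Pi}_h\mbox{\boldmath$v$})_\Omega - (\partial^2_{tt}\bm u_h,{\sf \Pi}_h\mbox{\boldmath$v$})_\Omega$. Substituting into the expression from Step~1, the two $(\mbox{\boldmath$f$},{\sf \Pi}_h\mbox{\boldmath$v$})_\Omega$ terms cancel and we get $a(\bm w,\mbox{\boldmath$v$}) = (\mbox{\boldmath$f$},\mbox{\boldmath$v$})_\Omega - (\partial^2_{tt}\bm u_h,{\sf \Pi}_h\mbox{\boldmath$v$})_\Omega$. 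Again using self-adjointness of ${\sf \Pi}_h$ and that $\partial^2_{tt}\bm u_h\in{\bf V}_h$, $(\partial^2_{tt}\bm u_h,{\sf \Pi}_h\mbox{\boldmath$v$})_\Omega=(\partial^2_{tt}\bm u_h,\mbox{\boldmath$v$})_\Omega$, so $a(\bm w,\mbox{\boldmath$v$}) = (\mbox{\boldmath$f$},\mbox{\boldmath$v$})_\Omega - (\partial^2_{tt}\bm u_h,\mbox{\boldmath$v$})_\Omega$.

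\textbf{Step 3: Subtract the exact equation and reassemble.} From~(\ref{weak}), $a(\bm u,\mbox{\boldmath$v$}) = (\mbox{\boldmath$f$},\mbox{\boldmath$v$})_\Omega - (\partial^2_{tt}\bm u,\mbox{\boldmath$v$})_\Omega$. Subtracting this from the identity of Step~2 and using bilinearity of $a$, $a(\bm w-\bm u,\mbox{\boldmath$v$}) = (\partial^2_{tt}\bm u,\mbox{\boldmath$v$})_\Omega - (\partial^2_{tt}\bm u_h,\mbox{\boldmath$v$})_\Omega = -(\partial^2_{tt}\bm e,\mbox{\boldmath$v$})_\Omega$, recalling $\bm e = \bm u_h - \bm u$ and that differentiation in $t$ commutes with the spatial pairing. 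Since $\mbox{\boldmath$\rho$}=\bm w-\bm u$, this is exactly $(\partial^2_{tt}\mbox{\boldmath$e$},\mbox{\boldmath$v$})_\Omega + a(\mbox{\boldmath$\rho$},\mbox{\boldmath$v$}) = 0$ for all $\mbox{\boldmath$v$}\in H^1_0(\Omega)^d$, as claimed.

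\textbf{Anticipated obstacle.} The argument is essentially bookkeeping; the only delicate point is the repeated interchange between $\mbox{\boldmath$v$}$ and ${\sf \Pi}_h\mbox{\boldmath$v$}$, which is legitimate only because every factor paired against them ($\bm B\bm u_h$, $\partial^2_{tt}\bm u_h$, and the $\mbox{\boldmath$f$}$-terms after projection) lies in ${\bf V}_h$, together with the self-adjointness of the $L^2$-projection. One should also note that $\partial^2_{tt}\bm u_h$ is a well-defined element of ${\bf V}_h$ for each $t$ (the semidiscrete solution is smooth enough in time by standard ODE theory), so that the manipulations in Step~2 are justified pointwise in $t$. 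No regularity beyond what is already assumed for~(\ref{weak}) and~(\ref{DG1}) is needed.
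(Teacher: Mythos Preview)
Your proof is correct and uses essentially the same approach as the paper: both arguments combine the weak formulation~(\ref{weak}), the DG scheme~(\ref{DG1}) tested against ${\sf \Pi}_h\mbox{\boldmath$v$}$, the defining relation~(\ref{recon-semi}) of the reconstruction, and the $L^2$-projection identities for quantities in ${\bf V}_h$. The only difference is cosmetic ordering---the paper starts from $(\partial^2_{tt}\mbox{\boldmath$e$},\mbox{\boldmath$v$})_\Omega + a(\mbox{\boldmath$\rho$},\mbox{\boldmath$v$})$ and reduces it to zero in a single chain of equalities, whereas you first derive a closed expression for $a({\bm w},\mbox{\boldmath$v$})$ and then subtract the exact equation---but the manipulations are identical.
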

\begin{proof}
We have the following expressions
\begin{equation}
\begin{split}
( {\partial^2_{tt}{\bm e}} , \mbox{\boldmath$v$})_\Omega + a(\mbox{\boldmath$\rho$},\mbox{\boldmath$v$}) &= ( {\partial^2_{tt}{\bm u}_h} , \mbox{\boldmath$v$})_\Omega + a({\bm w}, \mbox{\boldmath$v$}) - ( \partial^2_{tt}\mbox{\boldmath$u$} , \mbox{\boldmath$v$})_\Omega - a(\mbox{\boldmath$u$}, \mbox{\boldmath$v$})  \\
           &= (  {\partial^2_{tt}{\bm u}_h}  , \mbox{\boldmath$v$})_\Omega + a({\bm w}, \mbox{\boldmath$v$}) -( \mbox{\boldmath$f$} , \mbox{\boldmath$v$})_\Omega \\
           &=({\partial^2_{tt}{\bm u}_h} , {\sf \Pi}_h \mbox{\boldmath$v$})_\Omega + a({\bm w}, \mbox{\boldmath$v$}) - ( \mbox{\boldmath$f$} , \mbox{\boldmath$v$})_\Omega \\
           &=-a_h({\bm  u}_h, {\sf \Pi}_h \mbox{\boldmath$v$})+ (  \mbox{\boldmath$f$}, {\sf \Pi}_h \mbox{\boldmath$v$})_\Omega + a({\bm w}, \mbox{\boldmath$v$})   - ( \mbox{\boldmath$f$} , \mbox{\boldmath$v$})_\Omega \\
           &=-a_h({\bm  u}_h, {\sf \Pi}_h \mbox{\boldmath$v$}) + a({\bm w}, \mbox{\boldmath$v$}) + ( {\sf \Pi}_h \mbox{\boldmath$f$} - \mbox{\boldmath$f$} , \mbox{\boldmath$v$})_\Omega \\
           &= 0,   
\end{split}
\end{equation}
 where in the first equality, we used the decomposition~(\ref{errordecompose}), in the second equality we made use of~(\ref{weak}), in the third and fifth equalities,  the properties of the orthogonal $L^2$-projection and the formulation ~(\ref{DG1}) are used, finally the last equality follows  the identity $a_h({\bm  u}_h, {\sf \Pi}_h \mbox{\boldmath$v$}) - ( {\sf \Pi}_h \mbox{\boldmath$f$} - \mbox{\boldmath$f$} , \mbox{\boldmath$v$} )_\Omega   = a({\bm w}, \mbox{\boldmath$v$})$, which is deduced from the construction of ${\bm w}$ as in Definition~\ref{reconstw}.
\end{proof}
\subsection{Abstract semi-discrete error bound }\label{sm}
We have the following result for controlling the error $\|\mbox{\boldmath$e$}\|_{{ L}^\infty (0,T;{ L}^2 (\Omega)^d)}$ in terms of the nonconforming error $\bm \theta$:
\begin{theorem}\label{absm}
 {\rm (Abstract semi-discrete error bound)}. Let $\bm u$ and ${\bm u}_h$ be the  weak solution in~(\ref{weak}) and its DG approximation defined in~(\ref{DG1}) respectively, let $\bm  w$ be the {SE} reconstruction of ${\bm u}_h$ as in Definition~\ref{reconstw}. Applying the error decomposition~(\ref{errordecompose}) that ${\bm \rho}={\bm w}-{\bm u}$, $\mbox{\boldmath$\theta$}:= {\bm w} -{\bm  u}_h$, the following
error bound holds:
\begin{equation}\label{estimaterho}
\begin{split}
\|\mbox{\boldmath$\rho$}\|_{{ L}^\infty (0,T;{ L}^2 (\Omega)^d)}& \leq
\sqrt 2 \big(\|\mbox{\boldmath$u$}_0 - {\bm  u}_h(0)\|_{0,\Omega} +\|\mbox{\boldmath$\theta$}(0)\|_{0,\Omega}\big)\\
& + 2 \int_0^T \|\partial_t\mbox{\boldmath$\theta$}\|_{0,\Omega}{\rm d}t +2{\rm C_{F\Omega}c_*^{-1/2}}\|\mbox{\boldmath$u$}_1-\partial_t{\bm  u}_h(0)\|_{0,\Omega},
\end{split}
\end{equation}
Moreover, we also have
\begin{multline}\label{estimatee}
\|\mbox{\boldmath$e$}\|_{{ L}^\infty (0,T;{ L}^2 (\Omega)^d)} \leq \|\mbox{\boldmath$\theta$}\|_{{ L}^\infty (0,T;{ L}^2 (\Omega)^d)}+ 
\sqrt 2 \big(\|\mbox{\boldmath$u$}_0 - {\bm  u}_h(0)\|_{0,\Omega} +\|\mbox{\boldmath$\theta$}(0)\|_{0,\Omega}\big)\\
 + 2 \int_0^T \|\partial_t\mbox{\boldmath$\theta$}\|_{0,\Omega}{\rm d}t +2{C_{\rm F\Omega}c_*^{-1/2}}\|\mbox{\boldmath$u$}_1-\partial_t{\bm  u}_h(0)\|_{0,\Omega},
\end{multline}
where $C_{\rm F\Omega}$ is the constant of the Poincar\'e's inequality 
$\|\mbox{\boldmath$u$}\|_{0,\Omega}\leq  C_{\rm F\Omega} \|\nabla \mbox{\boldmath$u$}\|_{0,\Omega}$, $\forall \mbox{\boldmath$u$}\in H^ 1_0 (\Omega )^d$.
\end{theorem}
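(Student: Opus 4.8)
The plan is to adapt the testing procedure of Baker \cite{Baker} to the error relation of Lemma~\ref{errorrelation}. Fix $t^{*}\in(0,T]$ and set
\[
\bm\Phi(t):=\int_{t}^{t^{*}}\bm\rho(s)\,{\rm d}s,\qquad 0\le t\le t^{*};
\]
since $\bm\rho(t)=\bm w(t)-\bm u(t)\in H^{1}_{0}(\Omega)^{d}$, each $\bm\Phi(t)$ lies in $H^{1}_{0}(\Omega)^{d}$, and $\partial_{t}\bm\Phi=-\bm\rho$, $\bm\Phi(t^{*})=\bm 0$. First I would choose $\bm v=\bm\Phi(t)$ in Lemma~\ref{errorrelation} and integrate over $(0,t^{*})$, which gives
\[
\int_{0}^{t^{*}}(\partial^{2}_{tt}\bm e,\bm\Phi)_{\Omega}\,{\rm d}t+\int_{0}^{t^{*}}a(\bm\rho,\bm\Phi)\,{\rm d}t=0.
\]

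Next I would treat the two integrals. Integrating the first by parts once in time and using $\bm\Phi(t^{*})=\bm 0$, then writing $\partial_{t}\bm e=\partial_{t}\bm\rho-\partial_{t}\bm\theta$ and $\partial_{t}\bm\Phi=-\bm\rho$, turns it into $-(\partial_{t}\bm e(0),\bm\Phi(0))_{\Omega}+\tfrac12\|\bm\rho(t^{*})\|_{0,\Omega}^{2}-\tfrac12\|\bm\rho(0)\|_{0,\Omega}^{2}-\int_{0}^{t^{*}}(\partial_{t}\bm\theta,\bm\rho)_{\Omega}\,{\rm d}t$. For the second integral, the symmetry $C_{ijkl}=C_{klij}$ from~\eqref{tensor1} makes $a(\cdot,\cdot)$ symmetric, hence $a(\bm\rho,\bm\Phi)=-a(\partial_{t}\bm\Phi,\bm\Phi)=-\tfrac12\tfrac{{\rm d}}{{\rm d}t}a(\bm\Phi,\bm\Phi)$, and integrating with $\bm\Phi(t^{*})=\bm 0$ leaves $\tfrac12 a(\bm\Phi(0),\bm\Phi(0))\ge 0$. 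Combining these yields the energy identity
\[
\tfrac12\|\bm\rho(t^{*})\|_{0,\Omega}^{2}+\tfrac12 a(\bm\Phi(0),\bm\Phi(0))=(\partial_{t}\bm e(0),\bm\Phi(0))_{\Omega}+\tfrac12\|\bm\rho(0)\|_{0,\Omega}^{2}+\int_{0}^{t^{*}}(\partial_{t}\bm\theta,\bm\rho)_{\Omega}\,{\rm d}t.
\]

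I would then bound the right-hand side. For the first term, Cauchy--Schwarz, Poincar\'e's inequality $\|\bm\Phi(0)\|_{0,\Omega}\le C_{\rm F\Omega}\|\nabla\bm\Phi(0)\|_{0,\Omega}$, Korn's inequality, and the ellipticity~\eqref{tensor} of $\mathbb{C}$ (which give $a(\bm\Phi(0),\bm\Phi(0))\ge\tfrac{c_{*}}{2}\|\nabla\bm\Phi(0)\|_{0,\Omega}^{2}$) show that $(\partial_{t}\bm e(0),\bm\Phi(0))_{\Omega}$ is controlled by a constant times $C_{\rm F\Omega}\,c_{*}^{-1/2}\|\partial_{t}\bm e(0)\|_{0,\Omega}\,a(\bm\Phi(0),\bm\Phi(0))^{1/2}$; Young's inequality then absorbs a fraction of $a(\bm\Phi(0),\bm\Phi(0))$ into the left-hand side, leaving a term proportional to $C_{\rm F\Omega}^{2}c_{*}^{-1}\|\partial_{t}\bm e(0)\|_{0,\Omega}^{2}$. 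For the last term, Cauchy--Schwarz and nonnegativity of the integrand give $\int_{0}^{t^{*}}(\partial_{t}\bm\theta,\bm\rho)_{\Omega}\,{\rm d}t\le\big(\sup_{[0,t^{*}]}\|\bm\rho\|_{0,\Omega}\big)\int_{0}^{T}\|\partial_{t}\bm\theta\|_{0,\Omega}\,{\rm d}t$. Taking the supremum over $t^{*}\in(0,T]$ and writing $R:=\|\bm\rho\|_{L^{\infty}(0,T;L^{2}(\Omega)^{d})}$, the estimate becomes a quadratic inequality of the type $\tfrac12 R^{2}\le R\int_{0}^{T}\|\partial_{t}\bm\theta\|_{0,\Omega}\,{\rm d}t+\big(\text{terms in the initial data}\big)$, whose resolution gives~\eqref{estimaterho} once one notes, using~\eqref{initialcon}, \eqref{initialconditionuh} and~\eqref{errordecompose}, that $\|\partial_{t}\bm e(0)\|_{0,\Omega}=\|\bm u_{1}-\partial_{t}\bm u_{h}(0)\|_{0,\Omega}$ and $\|\bm\rho(0)\|_{0,\Omega}\le\|\bm u_{0}-\bm u_{h}(0)\|_{0,\Omega}+\|\bm\theta(0)\|_{0,\Omega}$ (the precise constants $\sqrt2$ and $2$ in the statement coming from the particular choices made in Young's inequality).

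Finally,~\eqref{estimatee} follows from the splitting $\bm e=\bm\rho-\bm\theta$ of~\eqref{errordecompose}, the triangle inequality $\|\bm e\|_{L^{\infty}(0,T;L^{2}(\Omega)^{d})}\le\|\bm\rho\|_{L^{\infty}(0,T;L^{2}(\Omega)^{d})}+\|\bm\theta\|_{L^{\infty}(0,T;L^{2}(\Omega)^{d})}$, and~\eqref{estimaterho}. I expect the main difficulty to be the choice of the test function $\bm\Phi$ together with the time integration by parts arranged so that only first-order time derivatives of $\bm e$ survive (so that the data $\partial_{t}\bm u(0)=\bm u_{1}$ and $\partial_{t}\bm u_{h}(0)={\sf\Pi}_{h}\bm u_{1}$ are what appear), and then the concluding step that upgrades the pointwise-in-$t^{*}$ energy identity to the $L^{\infty}(0,T;L^{2})$ bound through the quadratic inequality; the symmetry and the coercivity of $a$ are precisely what force the $a(\bm\Phi,\bm\Phi)$ contributions to carry the favourable sign.
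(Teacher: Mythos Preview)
Your proposal is correct and follows essentially the same approach as the paper: the Baker test function $\bm\Phi(t)=\int_t^{t^*}\bm\rho$, integration by parts in time, the symmetry of $a(\cdot,\cdot)$ to turn $a(\bm\rho,\bm\Phi)$ into a total derivative, the Poincar\'e--Korn--coercivity chain to relate $\|\bm\Phi(0)\|_{0,\Omega}$ to $a(\bm\Phi(0),\bm\Phi(0))$, Young's inequality (the paper writes it as $AB-\tfrac14A^2\le B^2$), and then resolution of the resulting quadratic inequality after selecting $t^*$ to realise the supremum. The only cosmetic difference is that the paper picks $t^*$ with $\|\bm\rho(t^*)\|_{0,\Omega}=\max_{[0,T]}\|\bm\rho\|_{0,\Omega}$ explicitly, whereas you phrase it as taking the supremum over $t^*$; the effect is the same.
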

\begin{proof}
The proof of this estimate is based on the integration by parts and a test function which is introduced by ~\cite{Baker}, has been  widely used for example in \cite{G-S-S}, \cite{G-L-M}.
{\em Firstly, we will prove the bound~(\ref{estimaterho})}.
 Let  the test function $\hat{\mbox{\boldmath$v$}}: [0, T] \times \Omega \rightarrow \mathbb{R}^d$ defined by
\begin{equation}\label{testfunction}
\hat{\mbox{\boldmath$v$}}(t, \cdot) =
\int_t^{{t^*}} \mbox{\boldmath$\rho$}(s, \cdot) ds,~
t \in [0, T],
\end{equation}
for some fixed ${t^*} \in [0, T]$.  Clearly $\hat{\mbox{\boldmath$v$}} \in { H}^1_{0}(\Omega)^d$ and $\mbox{\boldmath$\rho$} \in  { H}^1_{0}(\Omega)^d$. Also, we observe that
$$\hat{\mbox{\boldmath$v$}}({t^*} , \cdot) = 0,~~ {\mbox{and}}~~ \partial_t \hat{\mbox{\boldmath$v$}} (t, \cdot) = -\mbox{\boldmath$\rho$}(t, \cdot)~~~\mbox{ a.e. in}~[0,T].$$
Set $\mbox{\boldmath$v$} = \hat{\mbox{\boldmath$v$}}$ in the error relation~(\ref{errorrelation}), integrate between $0$ and ${t^*}$ with respect to the variable $t$, and integrate by parts the first
term on the left-hand side to obtain
$$-\int_0^{{t^*}}
( \partial_t \mbox{\boldmath$e$},\partial_t \hat{\mbox{\boldmath$v$}})_\Omega {\rm d}t + (\partial_t \mbox{\boldmath$e$} ({t^*}), \hat{\mbox{\boldmath$v$}}({t^*} ))_\Omega - (\partial_t  \mbox{\boldmath$e$}(0), \hat{\mbox{\boldmath$v$}}(0))_\Omega +\int_0^{{t^*}} a(\mbox{\boldmath$\rho$}, \hat{\mbox{\boldmath$v$}}){\rm d}t = 0. $$
Due to the symmetry of the tensor $\mathbb C$, and using the fact that $\partial_t\hat{\mbox{\boldmath$v$}}=-{\bm \rho}$ and $\hat{\mbox{\boldmath$v$}}({t^*} , \cdot)=0$, integrate on $[0,T]$, we rewrite the identity as
$$\int_0^{{t^*}} \frac{1}{2}\frac{d}{dt}\|\mbox{\boldmath$\rho$}(t)\|_{0,\Omega}^2dt-\frac{1}{2}\int_0^{{t^*}}\frac{d}{dt}a(\hat{\mbox{\boldmath$v$}}(t),\hat{\mbox{\boldmath$v$}}(t)) {\rm d}t=\int_0^{{t^*}} (\partial_t \mbox{\boldmath$\theta$}, \mbox{\boldmath$\rho$} )_{\Omega}{\rm d}t
+(\partial_t \mbox{\boldmath$e$}(0), \hat{\mbox{\boldmath$v$}} (0))_{\Omega}$$
which implies
$$\frac{1}{2} \|\mbox{\boldmath$\rho$}({t^*})\|_{0,\Omega}^2-\frac{1}{2}\|\mbox{\boldmath$\rho$}(0)\|_{0,\Omega}^2+\frac{1}{2}a(\hat{\mbox{\boldmath$v$}}(0),\hat{\mbox{\boldmath$v$}}(0)) =\int_0^{{t^*}} (\partial_t \mbox{\boldmath$\theta$}, \mbox{\boldmath$\rho$})_{\Omega}{\rm d}t
+(\partial_t \mbox{\boldmath$e$}(0), \hat{\mbox{\boldmath$v$}} (0))_{\Omega}.$$
Hence, we deduce
\begin{equation}\label{**} 
\begin{split}
\frac{1}{2} \|\mbox{\boldmath$\rho$}({t^*})\|_{0,\Omega}^2&-\frac{1}{2} \|\mbox{\boldmath$\rho$}(0)\|_{0,\Omega}^2+\frac{1}{2}a(\hat{\mbox{\boldmath$v$}}(0),\hat{\mbox{\boldmath$v$}}(0)) \\
&\leq \max_{0\leq t\leq T} \|\mbox{\boldmath$\rho$}(t)\|_{0,\Omega}\int_0^{{t^*}} \|\partial_t\mbox{\boldmath$\theta$} \|_{0,\Omega}{\rm d}t+\|\partial_t\mbox{\boldmath$e$}(0)\|_{0,\Omega}\| \hat{\mbox{\boldmath$v$}} (0)\|_{0,\Omega}.
\end{split}
\end{equation}
From the Poincar\'e's inequality, the bound of tensor $\mathbb C$, and the Korn's inequality (cf.~\cite{Riv}, ~\cite{Ciarlet}) that
$ \|\nabla \mbox{\boldmath$v$} \|_{0,\Omega}\leq {\sqrt 2}\|\mbox{\boldmath$\varepsilon (v)$}\|_{0,\Omega}$, $\forall {\bm v} \in H^1_0(\Omega)^d$, we get that
\begin{equation}
\begin{split}
\frac{1}{2}a(\hat{\mbox{\boldmath$v$}}(0),\hat{\mbox{\boldmath$v$}}(0))&\geq \frac{1}{2}c_* \|{\bm \varepsilon}(\hat{\mbox{\boldmath$v$}} (0))\|^2_{0,\Omega}\\
&\geq \frac{1}{4}c_* \|\nabla\hat{\mbox{\boldmath$v$}} (0)\|^2_{0,\Omega}
\geq \frac{1}{4}c_* C^{-2}_{\rm F\Omega}\|\hat{\mbox{\boldmath$v$}} (0)\|^2_{0,\Omega},
\end{split}
\end{equation}
Combining
this bound with~(\ref{**}), we arrive at
\begin{equation}
\begin{split}
\frac{1}{2} \|\mbox{\boldmath$\rho$}({t^*})\|_{0,\Omega}^2-\frac{1}{2} \|\mbox{\boldmath$\rho$}(0)\|_{0,\Omega}^2&\leq \max_{0\leq t\leq T} \|\mbox{\boldmath$\rho$}(t)\|_{0,\Omega}\int_0^{{t^*}} \|\partial_t \mbox{\boldmath$\theta$}\|_{0,\Omega}{\rm d}t\\
&+\|\partial_t\mbox{\boldmath$e$}(0)\|_{0,\Omega}\| \hat{\mbox{\boldmath$v$}} (0)\|_{0,\Omega}- \frac{1}{4}c_* C^{-2}_{F\Omega}\|\hat{\mbox{\boldmath$v$}} (0)\|^2_{0,\Omega}.
\end{split}
\end{equation}
Moreover, by estimating the last two terms, we arrive at
\begin{equation}\label{hdt}
\begin{split}
\frac{1}{2} \|\mbox{\boldmath$\rho$}({t^*})\|_{0,\Omega}^2-\frac{1}{2} \|\mbox{\boldmath$\rho$}(0)\|_{0,\Omega}^2
\leq \max_{0\leq t\leq T} \|\mbox{\boldmath$\rho$}(t)\|_{0,\Omega}\int_0^{{t^*}} \|\partial_t\mbox{\boldmath$\theta$} \|_{0,\Omega}{\rm d}t+ {c_*}^{-1} C^2_{F\Omega}\|\partial_t\mbox{\boldmath$e$}(0)\|^2_{0,\Omega},
\end{split}
\end{equation}
Now, we select ${t^*}$ such that $\|\mbox{\boldmath$\rho$}({t^*} )\|_{0,\Omega} = \max_{0 \leq t\leq T}\|\mbox{\boldmath$\rho$}(t)\|_{0,\Omega}$. In~(\ref{hdt}), using the inequality $AB-\frac{1}{4}A^2 \leq B^2$ with $A=\|\mbox{\boldmath$\rho$}({t^*})\|_{0,\Omega}$, $B=\int_0^{{t^*}} \|\partial_t\mbox{\boldmath$\theta$} \|_{0,\Omega}{\rm d}t$ and then taking square root of the resulting inequality yields
\begin{equation}
\begin{split}
\|\mbox{\boldmath$\rho$}\|_{{ L}^\infty (0,T;{ L}^2 (\Omega)^d)} \leq 
\sqrt 2 \|{\bm \rho}(0)\|_{0,\Omega}+ 2 \int_0^T \|\partial_t\mbox{\boldmath$\theta$}\|_{0,\Omega}{\rm d}t +2C_{\rm F\Omega} {c_*}^{-1/2} \|\partial_t{\bm e}(0)\|_{0,\Omega};
\end{split}
\end{equation}

{ Finally we will get the estimate~(\ref{estimatee}) as a consequence of the above estimate}. Indeed, using the bound $\|\mbox{\boldmath$\rho$}(0)\|_{0,\Omega}\leq \|\mbox{\boldmath$e$}(0)\|_{0,\Omega}+\|\mbox{\boldmath$\theta$}(0)\|_{0,\Omega}$; $\mbox{\boldmath$e$}(0)={\bm  u}_h(0)-\mbox{\boldmath$u$}_0$, and $\partial _t\mbox{\boldmath$e$}(0)=\partial_t{\bm  u}_h(0)-\mbox{\boldmath$u$}_1$,
we conclude that
\begin{equation}
\begin{split}
 \|\mbox{\boldmath$e$}\|_{{ L}^\infty (0,T;{ L}^2 (\Omega)^d)} &\leq \|\mbox{\boldmath$\theta$}\|_{{L}^\infty (0,T;{ L}^2 (\Omega)^d)}+\|\mbox{\boldmath$\rho$}\|_{{ L}^\infty (0,T;{ L}^2 (\Omega)^d)}\\
&\leq \|\mbox{\boldmath$\theta$}\|_{{ L}^\infty (0,T;{ L}^2 (\Omega)^d)}+ \sqrt 2 (\|\mbox{\boldmath$u$}_0 - {\bm  u}_h(0)\|_{0,\Omega} +\|\mbox{\boldmath$\theta$}(0) \|_{0,\Omega})\\
&+ 2 \int_0^T \|\partial_t\mbox{\boldmath$\theta$}\|_{0,\Omega}{\rm d}t +2{\rm C_{F\Omega} c_*^{-1/2}} \|\mbox{\boldmath$u$}_1-\partial_t{\bm  u}_h(0)\|_{0,\Omega}.
\end{split}
\end{equation}
%
\end{proof}
 Now it remains to estimate the norms involving the conforming error ${\bm \theta}={\bm w}-{\bm u}_h$ by a computable quantity. According to Remark~\ref{rolew22},   we have that that ${\bm u}_h$ is exact the DG approximation of $\bm w$. We have then the following result.
 \begin{lemma}\label{completing}
Given ${\bm r}\in L^2(\Omega)^d$, consider the stationary elasticity problem of  finding ${\bm z}\in H^1_0(\Omega)^d$ such that
\begin{equation}\label{sta}
-\nabla \cdot ({\bm \sigma}(\bm z))={\bm r}, \quad {\bm z}={\bm 0}~~{\mbox{on}}~\partial \Omega,
\end{equation}
whose solution can be approximated by ${\bm z}_h \in {\bf V}_h$ of the following DG method:
\begin{equation}\label{DGsta}
a_h({\bm z}_h, {\bm v})=( {\bm r}, {\bm v})_\Omega, \quad \mbox{for all } {\bm v} \in {\bf V}_h.
\end{equation}
If we assume that an {\em a posteriori} estimator ${\mathscr E}_{\rm IP}$ exists, i.e.
$$\|{\bm z}-{\bm z}_h\|_{0,\Omega}\leq{\mathscr E}_{\rm IP}({\bm z}_h, {\bm r}, {\mathcal T}_h),$$
then we can  bound the terms in Theorem~\ref{absm} as follows
\begin{subequations}\label{thetaes}
\begin{alignat}{3}
 \|\mbox{\boldmath$\theta$}\|_{{ L}^\infty (0,T;{ L}^2 (\Omega)^d)} &\leq \| {\mathscr E}_{\rm IP}({\bm  u}_h,{ \bm g}, {\mathcal T}_h)\|_{L^\infty(0,T)};\\
\sqrt{2}\|\mbox{\boldmath$\theta$}(0)\|_{0,\Omega}&\leq \sqrt{2} {\mathscr E}_{\rm IP}({\bm  u}_h(0),{ \bm g}(0), {\mathcal T}_h);\\
 2 \int_0^T \|\partial_t\mbox{\boldmath$\theta$}\|_{0,\Omega}{\rm d}t &\leq 2\int_0^T  {\mathscr E}_{\rm IP}({\partial_t\bm  u}_h,\partial_t{ \bm g}, {\mathcal T}_h ){\rm d}t;
\end{alignat}
\end{subequations}
where ${\bm g}={{\bm B} {\bm u}_h}-{\sf \Pi}_h{\bm f}+{\bm f}$, and under the assumption that $\bm f$ is differentiable in time.
\end{lemma}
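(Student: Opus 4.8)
The plan is to reduce each of the three quantities in~(\ref{thetaes}) to a single application of the assumed stationary elasticity estimator ${\mathscr E}_{\rm IP}$, relying on the identification recorded in Remark~\ref{rolew22}. Recall from that remark that, for each fixed $t\in[0,T]$, the {SE} reconstruction $\bm w(t)$ is exactly the solution of the stationary elasticity problem~(\ref{eqg}) with right-hand side $\bm g(t)=\bm B\bm u_h(t)-{\sf \Pi}_h\bm f(t)+\bm f(t)$, and moreover $\bm u_h(t)$ is precisely its DG approximation, i.e. $a_h(\bm u_h(t),\bm v)=(\bm g(t),\bm v)_\Omega$ for all $\bm v\in{\bf V}_h$. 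Hence the pair $(\bm z,\bm z_h)=(\bm w(t),\bm u_h(t))$ with datum $\bm r=\bm g(t)$ is an admissible instance of~(\ref{sta})--(\ref{DGsta}), and the hypothesis gives, for a.e. $t\in[0,T]$,
$$\|\bm\theta(t)\|_{0,\Omega}=\|\bm w(t)-\bm u_h(t)\|_{0,\Omega}\leq{\mathscr E}_{\rm IP}\big(\bm u_h(t),\bm g(t),{\mathcal T}_h\big).$$
Taking the essential supremum over $t\in[0,T]$ yields the first bound of~(\ref{thetaes}), and evaluating at $t=0$ yields the second.

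For the third bound I would differentiate in time the two relations $a(\bm w,\bm v)=(\bm g,\bm v)_\Omega$ and $a_h(\bm u_h,\bm v)=(\bm g,\bm v)_\Omega$. Since the forms $a$, $a_h$, the operator $\bm B$ and the $L^2$-projection ${\sf \Pi}_h$ are linear and independent of $t$, and $\bm f$ is differentiable in time, the maps $t\mapsto\bm w(t)$ and $t\mapsto\bm u_h(t)$ are differentiable and their derivatives satisfy
$$a(\partial_t\bm w,\bm v)=(\partial_t\bm g,\bm v)_\Omega\ \ \forall\bm v\in H^1_0(\Omega)^d,\qquad a_h(\partial_t\bm u_h,\bm v)=(\partial_t\bm g,\bm v)_\Omega\ \ \forall\bm v\in{\bf V}_h,$$
with $\partial_t\bm g=\bm B\,\partial_t\bm u_h-{\sf \Pi}_h\,\partial_t\bm f+\partial_t\bm f$, since $\partial_t$ commutes with the $t$-independent operators $\bm B$ and ${\sf \Pi}_h$. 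Thus $\partial_t\bm w$ is the exact solution of the stationary problem~(\ref{sta}) with datum $\partial_t\bm g$ and $\partial_t\bm u_h$ is its DG approximation, so a second application of the hypothesis gives $\|\partial_t\bm\theta(t)\|_{0,\Omega}\leq{\mathscr E}_{\rm IP}(\partial_t\bm u_h(t),\partial_t\bm g(t),{\mathcal T}_h)$ for a.e. $t$; integrating over $(0,T)$ produces the third bound.

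The genuinely delicate point is the time-differentiation step, i.e. justifying that $\bm w$ and $\bm u_h$ are differentiable in $t$ with derivatives solving the differentiated problems. Differentiability of $\bm u_h$ comes from the fact that~(\ref{DG1}) is a linear system of second-order ODEs whose forcing $t\mapsto\bm f(t)$ is differentiable, so $\bm u_h$, and hence $\bm g=\bm B\bm u_h-{\sf \Pi}_h\bm f+\bm f$, is differentiable in time; differentiability of $\bm w$ then follows because, by Remark~\ref{rolew}, $\bm w(t)$ is the image of $\bm g(t)$ under the bounded, $t$-independent Lax--Milgram solution operator of~(\ref{recon-semi}), so one may pass the difference quotient to the limit and use linearity. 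Once this is granted, the rest is the direct substitution into the assumed estimator bound described above, together with monotonicity of the essential supremum and of the time integral; no further estimates are needed.
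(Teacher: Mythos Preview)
Your argument is correct and follows essentially the same route as the paper: the first two bounds come directly from Remark~\ref{rolew22} identifying $(\bm w(t),\bm u_h(t))$ as an exact/DG pair for the stationary problem with datum $\bm g(t)$, and the third bound comes from differentiating the relations~(\ref{recon-semi}) and $a_h(\bm u_h,\bm v)=(\bm g,\bm v)_\Omega$ in time, using that $a$, $a_h$, $\bm B$ and ${\sf \Pi}_h$ are $t$-independent. Your additional paragraph justifying the time-differentiability of $\bm w$ and $\bm u_h$ is a welcome clarification that the paper itself leaves implicit.
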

\begin{proof}
The first two bounds of~(\ref{thetaes}) are directly derived from the property of $\bm w$ as presented in Remark~\ref{rolew22}. For the third estimate, noting that  $a(\cdot,\cdot)$ and $a_h(\cdot,\cdot)$ are independent of $t$, then there holds
$$a({\partial_t\bm  w}, {\bm v})=({\partial_t\bm  g}, {\bm v})_\Omega, ~\mbox{for all}~~{\bm v} \in H^1_0(\Omega)^d,$$
and $$a_h({\partial_t\bm  u}_h, {\bm v})=(\partial_t ({\bm B}{\bm u}_h), {\bm v})_\Omega=({\partial_t\bm  g}, {\bm v})_\Omega, ~\mbox{for all}~~{\bm v} \in {\bf V}_h,$$
noting that here we make use of the fact that the projection ${\sf \Pi}_h$ commutes with the time differentiation.
The estimate  is then the difference between ${\partial_t \bm  u}_h$ and its reconstruction $\partial_t {\bm w}$.
\end{proof}

\section{{ A posteriori} residual bounds for the stationary elasticity problem}
In this section, we present two methods to derive  {\em a posteriori} error bounds in $L^2$-norm  for the stationary elasticity problem. The first one is based on the method of duality, and the second one is based on rewriting the DG formulation with lifting operator to derive the error estimate in energy norm which will provide a bound in the $L^2$-norm of the error as well.
 

\subsection{{\em A posteriori} error bound in $L^2$-norm  by duality method}
\begin{theorem}\label{IP2} {\rm ({Error obtained by duality method})}.
 Let ${\bm z} \in { H}^1_{0}(\Omega)^d$ be the solution to the stationary  problem~(\ref{sta}) and ${{\bm z}_h} \in {\bf V}_h$ be the DG approximation of $\mbox{\boldmath$z$}$  as in ~(\ref{DGsta}). Then the error for the DG method is estimated by
$$\|\mbox{\boldmath$z$}-{{\bm z}_h}\|_{0,\Omega} \leq 
 \mathscr{E}_{\rm IP}({{\bm z}_h}, \mbox{\boldmath$r$},{\mathcal T}_h),$$
with $\mathscr{E}_{\rm IP}$ is given by
\begin{align*}
\mathscr{E}_{\rm IP}({{\bm z}_h}, \mbox{\boldmath$r$}, {\mathcal T}_h ) :=C\bigg \{\sum_{K\in {\mathcal T}_h} h_K^4 &\| {\bm r}+{\nabla \cdot ({\bm \sigma ({\bm z}_h)})}\|^2_{0,K} \\
&+\sum_{e\in {\mathcal E}_h^I} {{\sf h}}^3 \|\llbracket{\bm \sigma} ({\bm z}_h) {\bm \nu}_e\rrbracket\|_{0,e}^2+\sum_{e\in {\mathcal E}_h} {{\sf h}} \|\llbracket {\bm z}_h\rrbracket\|^2_{0,e}\bigg\}^{1/2},
\end{align*}
where $C$ is a positive constant independent of ${{\bm z}_h}, {\bm r}, h$ and ${\mathcal T}_h$.
\end{theorem}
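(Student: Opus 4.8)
The plan is a duality (Aubin--Nitsche) argument adapted to the symmetric interior penalty form $a_h$. Set ${\bm e}:={\bm z}-{\bm z}_h\in L^2(\Omega)^d$ and introduce the dual elasticity problem: find ${\bm \psi}\in H^1_0(\Omega)^d$ with $-\nabla\cdot({\bm \sigma}({\bm \psi}))={\bm e}$ in $\Omega$. Throughout I assume the $H^2$-regularity estimate $\|{\bm \psi}\|_{2,\Omega}\le C_{\rm reg}\|{\bm e}\|_{0,\Omega}$ (valid, e.g., when $\Omega$ is convex and the interfaces of the piecewise-constant tensor ${\mathbb C}$ are resolved by ${\mathcal T}_h$); in particular ${\bm \sigma}({\bm \psi})\in H(\mathrm{div},\Omega)$, so ${\bm \sigma}({\bm \psi}){\bm \nu}_e$ is single-valued across each interior face, which is what makes the duality consistent.

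Then I write $\|{\bm e}\|_{0,\Omega}^2=({\bm e},-\nabla\cdot({\bm \sigma}({\bm \psi})))_\Omega$ and split it as $({\bm z},-\nabla\cdot({\bm \sigma}({\bm \psi})))_\Omega-({\bm z}_h,-\nabla\cdot({\bm \sigma}({\bm \psi})))_\Omega$. For the first piece, since ${\bm z}\in H^1_0(\Omega)^d$, integration by parts and the weak form of the stationary problem~(\ref{sta}) give $({\bm z},-\nabla\cdot({\bm \sigma}({\bm \psi})))_\Omega=({\bm \sigma}({\bm z}),{\bm \varepsilon}({\bm \psi}))_\Omega=a({\bm z},{\bm \psi})=({\bm r},{\bm \psi})_\Omega$, where the symmetry $C_{ijkl}=C_{klij}$ is used to move ${\mathbb C}$ onto ${\bm \varepsilon}({\bm \psi})$. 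For the second piece I integrate by parts element-wise and collect face terms using that ${\bm \sigma}({\bm \psi}){\bm \nu}_e$ is single-valued. Next I insert the Galerkin relation: let ${\bm \psi}_h:={\mathcal I}_h{\bm \psi}\in{\bf V}_h\cap H^1_0(\Omega)^d$ be a continuous, boundary-conforming quasi-interpolant of ${\bm \psi}$ (a Scott--Zhang or Cl\'ement operator; equivalently, nodal interpolation since ${\bm \psi}\in H^2\hookrightarrow C^0$ for $d\le 3$). By~(\ref{DGsta}), $a_h({\bm z}_h,{\bm \psi}_h)=({\bm r},{\bm \psi}_h)_\Omega$; adding $0=a_h({\bm z}_h,{\bm \psi}_h)-({\bm r},{\bm \psi}_h)_\Omega$, expanding $a_h$ (the penalty term and the term $\llrrbrace{{\bm \sigma}({\bm z}_h){\bm \nu}_e}\cdot\llbracket{\bm \psi}_h\rrbracket$ vanish because ${\bm \psi}_h$ is conforming), and integrating by parts once more against the \emph{continuous} function ${\bm \psi}-{\bm \psi}_h$, the residual volume term becomes an element residual plus a flux jump supported on interior faces, and one arrives at the error representation
\begin{multline*}
\|{\bm e}\|_{0,\Omega}^2 = \sum_{K\in{\mathcal T}_h}\int_K \big({\bm r}+\nabla\cdot({\bm \sigma}({\bm z}_h))\big)\cdot({\bm \psi}-{\bm \psi}_h)\,{\rm d}{\bm x} - \sum_{e\in{\mathcal E}_h^I}\int_e \llbracket{\bm \sigma}({\bm z}_h){\bm \nu}_e\rrbracket\cdot({\bm \psi}-{\bm \psi}_h)\,{\rm d}A \\ + \sum_{e\in{\mathcal E}_h}\int_e \llbracket{\bm z}_h\rrbracket\cdot\llrrbrace{{\bm \sigma}({\bm \psi}-{\bm \psi}_h){\bm \nu}_e}\,{\rm d}A .
\end{multline*}

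To close, I bound the three terms by the Cauchy--Schwarz inequality on each element/face together with standard interpolation estimates $\|{\bm \psi}-{\bm \psi}_h\|_{0,K}+h_K\|{\bm \varepsilon}({\bm \psi}-{\bm \psi}_h)\|_{0,K}\le Ch_K^2\|{\bm \psi}\|_{2,\omega_K}$ and the $H^2$-stability $\|{\bm \psi}_h\|_{2,K}\le C\|{\bm \psi}\|_{2,\omega_K}$, combined with the trace inequality of Lemma~\ref{tr} (and Remark~\ref{C^*} for $\|{\bm \sigma}(\cdot)\|_{0,e}\le C^*\|{\bm \varepsilon}(\cdot)\|_{0,e}$): this gives $\|{\bm \psi}-{\bm \psi}_h\|_{0,K}\le Ch_K^2\|{\bm \psi}\|_{2,\omega_K}$ (whence the weight $h_K^4$ on the element residual), $\|{\bm \psi}-{\bm \psi}_h\|_{0,e}\le Ch_K^{3/2}\|{\bm \psi}\|_{2,\omega_K}$ (weight ${\sf h}^3$ on the flux jump), and $\|{\bm \sigma}({\bm \psi}-{\bm \psi}_h){\bm \nu}_e\|_{0,e}\le Ch_K^{1/2}\|{\bm \psi}\|_{2,\omega_K}$ (weight ${\sf h}$ on $\llbracket{\bm z}_h\rrbracket$). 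Summing over $K$ and $e$, using the finite overlap of the patches $\omega_K$ and local quasi-uniformity (${\sf h}\simeq h_K$), yields $\|{\bm e}\|_{0,\Omega}^2\le C\,\mathscr{E}_{\rm IP}({\bm z}_h,{\bm r},{\mathcal T}_h)\,\|{\bm \psi}\|_{2,\Omega}\le C\,C_{\rm reg}\,\mathscr{E}_{\rm IP}({\bm z}_h,{\bm r},{\mathcal T}_h)\,\|{\bm e}\|_{0,\Omega}$, and dividing by $\|{\bm e}\|_{0,\Omega}$ (the claim being trivial if ${\bm e}={\bm 0}$) finishes the proof. I expect the main obstacle to be not any individual estimate but the bookkeeping of jumps and averages through the two rounds of element-wise integration by parts — making the conforming choice of ${\bm \psi}_h$ cancel exactly the boundary and penalty contributions so that only the three computable residuals survive, each carrying the precise power of ${\sf h}$ appearing in $\mathscr{E}_{\rm IP}$; a second, indispensable ingredient is the $H^2$-regularity of the dual problem, which is exactly what upgrades the crude $h$-powers to the $h^3$ and $h^4$ weights in the estimator.
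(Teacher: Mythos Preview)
Your proposal is correct and follows essentially the same duality (Aubin--Nitsche) route as the paper: introduce the $H^2$-regular dual solution, subtract a conforming interpolant of it via the Galerkin relation so that the penalty and one consistency term drop out, integrate by parts element-wise to obtain the same three-term residual representation, and then apply Cauchy--Schwarz together with $h_K^2$, $h_K^{3/2}$, $h_K^{1/2}$ interpolation/trace bounds and the regularity shift. The only cosmetic differences are that you use a patch-based Scott--Zhang/Cl\'ement interpolant whereas the paper invokes the local interpolant of Lemma~\ref{bestapp2}, and you reach the error representation by first splitting $\|{\bm e}\|_{0,\Omega}^2$ into the ${\bm z}$- and ${\bm z}_h$-parts rather than invoking the consistency identity $a_h({\bm e},{\bm \psi}_h)=0$ directly---both paths lead to the identical representation.
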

\begin{proof}
In this proof we denote by ${\sf e}={\bm z}-{\bm z}_h$. We consider  the adjoint  problem
\begin{equation}\label{adjointpr}
\begin{split}
-\nabla \cdot ({\bm \sigma ({\bm \phi})})&={\sf e}, \quad \mbox{in}~ \Omega\\
{\bm \phi}&={\bm 0},\quad \mbox{on} ~\partial \Omega,
\end{split}
\end{equation}
and assume that this problem is regular in the sense that the solution ${\bm \phi}$ satisfies    ${\bm \phi} \in H^2(\Omega)^d$ with continuous dependence on ${\sf e}$:
\begin{equation}\label{regulardual}
\|{\bm \phi}\|_{2,\Omega} \leq C \|{\sf e}\|_{0,\Omega}.
\end{equation}
This assumption is known to hold, in particular, if the domain $\Omega$ is convex (cf.~\cite{Ciarlet}).
On the other hand, along with Eq.~(\ref{adjointpr}), we get the following expression
\begin{equation}
\|{\sf e}\|^2_{0,\Omega}=\sum_{K\in {\mathcal T}_h}\int_{K} (-\nabla\cdot ({\bm \sigma ({\bm \phi})}))\cdot {\sf e}{\rm d}{\bm x}.
\end{equation}
Now applying integration by parts on each element and using the fact that the term $\llbracket{{(\bm \sigma ({\bm \phi}) ){\bm \nu}_e}}\rrbracket={\bm 0}$ a.e. on $e$, $\forall e\in {\mathcal E}_h^I$, we obtain
\begin{equation}\label{E2sub}
\|{\sf e}\|^2_{0,\Omega}=\sum_{K\in {\mathcal T}_h}\int_{K}  ({\bm \sigma ({\bm \phi})}): {\bm \varepsilon}({\sf e}) {\rm d}{\bm x} -\sum_{e \in {\mathcal E}_h}\int_e  \llrrbrace{({\bm \sigma ({\bm \phi})}){\bm \nu}_e}\cdot \llbracket{{\sf e}}\rrbracket {\rm d} A.
\end{equation}

Let ${\bm \phi}^*$ be a continuous interpolant  of $\bm \phi$  as in Lemma~\ref{bestapp2} satisfying  properties in~(\ref{appv3}), note that with this interpolation we  also have that  ${\bm \phi}^*={\bm \phi}={\bm 0}$ on $\partial \Omega$ (see~\cite{R-W-G}). Thanks to the consistency of the bilinear form $a_h$, we get the orthogonality equation $a_h({\sf e},{\bm \phi}^*)=0$. Then the following expression holds
\begin{equation}
\begin{split}
0= \sum_{K\in {\mathcal T}_h}\int_{K}  {\bm \sigma ({\bm \phi}^*)}: {\bm \varepsilon}({\sf e}) {\rm d}{\bm x} -\sum_{e \in {\mathcal E}_h}\int_e \llrrbrace{({\bm \sigma ({\bm \phi}^*)}){\bm \nu}_e}\cdot \llbracket{{\sf e}}\rrbracket {\rm d} A,
\end{split}
\end{equation}
noting that the jump terms $\sum_{e\in {\mathcal E}_h} \int_e \llrrbrace{({\bm \sigma ({\sf e})}){\bm \nu}_e}\cdot \llbracket{{\bm \phi} ^*}\rrbracket {\rm d}A$ and $\sum_{e\in {\mathcal E}_h} \int_e \llbracket{{\bm  \phi}^*}\rrbracket\cdot \llbracket{{\sf e}}\rrbracket {\rm d}A$ vanish due to the continuity of ${\bm \phi}^*$ which is  followed by our choice.

Subtracting the above equation from~(\ref{E2sub}), by noticing that $\int_{K}  ({\bm \sigma} ({\bm \phi})): {\bm \varepsilon}({\sf e}) {\rm d}{\bm x}=\int_{K}  ({\bm \sigma} ({\sf e})): {\bm \varepsilon}({\bm \phi}) {\rm d}{\bm x}$  we have the following equation
\begin{equation}\label{E2sub2}
\|{\sf e}\|^2_{0,\Omega}=\sum_{K\in {\mathcal T}_h}\int_{K}  {\bm \sigma ({\sf e})}: {\bm \varepsilon}({\bm \phi}-{\bm \phi}^*) {\rm d}{\bm x}-\sum_{e \in {\mathcal E}_h}\int_e \llrrbrace{({\bm \sigma ({\bm \phi}-{\bm \phi}^*)}){\bm \nu}_e}\cdot \llbracket{{\sf e}}\rrbracket {\rm d} A.
\end{equation}
Integrate by parts the first term from ~(\ref{E2sub2}),  we obtain
\begin{equation*}
\begin{split}
\|{\sf e}\|^2_{0,\Omega}&=\sum_{K\in {\mathcal T}_h}\int_{K}  \big({\bm f}+{\nabla \cdot ({\bm \sigma}({\bm z}_h)})\big)\cdot ({\bm \phi}-{\bm \phi}^*) {\rm d}{\bm x} - \sum_{e \in {\mathcal E}_h^I}\int_e  \llbracket{ ({\bm \sigma}({\bm z}_h)){\bm \nu}_e}\rrbracket \cdot \llrrbrace{ {\bm \phi}-{\bm \phi}^*}{\rm d} A\\
&+\sum_{e \in {\mathcal E}_h}\int_e \llrrbrace{({\bm \sigma ({\bm \phi}-{\bm \phi}^*)}){\bm \nu}_e}\cdot \llbracket{{\bm z}_h}\rrbracket {\rm d} A.
\end{split}
\end{equation*} 
Then by employing the Cauchy-Schwarz's inequality, we have that
\begin{equation}\label{E2sub3}
\begin{split}
\|{\sf e}\|^2_{0,\Omega}&\leq  \big(\sum_{K\in {\mathcal T}_h} h_K^4\|{\bm f}+{\nabla \cdot ({\bm \sigma ({\bm z}_h)})}\|^2_{0,K}\big)^{1/2}\big (\sum_{K\in {\mathcal T}_h}h_K^{-4} \|{\bm \phi}-{\bm \phi}^*\|^2_{0,K}\big)^{1/2} \\
&+\big( \sum_{e \in {\mathcal E}_h^I} {\sf h}^3\|\llbracket{ ({\bm \sigma}({\bm z}_h)){\bm \nu}_e}\rrbracket \|_{0,e}^2\big)^{1/2}\big(\sum_{e \in {\mathcal E}_h^I} {\sf h}^{-3}\| {\bm \phi}-{\bm \phi}^*\|^2_{0,e}\big)^{1/2}\\
&+\big (\sum_{e \in {\mathcal E}_h} {\sf h}^{-1}\|\llrrbrace {({\bm \sigma ({\bm \phi}-{\bm \phi}^*)}){\bm \nu}_e}\|^2_{0,e}\big)^{1/2} \big(\sum_{e \in {\mathcal E}_h}{\sf h}\|\llbracket{{\bm z}_h}\rrbracket\|^2_{0,e}\big)^{1/2}.
\end{split}
\end{equation}
Now applying the properties of the interpolation ${\bm \phi}^*$ as in ~(\ref{bestapp2}), we have the following estimates for an element $K$ of the triangulation and an edge or face  $e$  on $\partial K$:
\begin{equation}
\begin{split}
\|{\bm \phi}-{\bm \phi}^*\|_{0,K}^2 &\leq C h_K^4 \|{\bm \phi}\|^2_{2,K}\\
\|{\bm \phi}-{\bm \phi}^*\|_{0,e}^2 &\leq C h_K^3 \|{\bm \phi}\|^2_{2,K}\\
\|({\bm \sigma}({\bm \phi}-{\bm \phi}^*)){\bm \nu}_e\|_{0,e}^2 &\leq C h_K\|{\bm \phi}\|^2_{2,K}.
\end{split}
\end{equation}
 Finally, applying the above approximation results  to ~(\ref{E2sub3}), we deduce
\begin{equation}
\begin{split}
\|{\sf e}\|_{0,\Omega}^2 \leq C \bigg( &\sum_{K\in} h_K^4 \|{\bm f}+\nabla \cdot {\bm \sigma }({\bm z}_h)\|^2_{0,K}\\
&+{\sf h}^3\sum_{e\in {\mathcal E}_h^I}\|\llbracket{({\bm \sigma}({\bm z}_h)){\bm \nu}_e}\rrbracket\|^2_{0,e}+\sum_{e\in {\mathcal E}_h} {\sf h}\|\llbracket{{\bm z}_h}\rrbracket\|^2_{0,e}
\bigg)^{1/2}\|{{\bm \phi}}\|_{2,\Omega},
\end{split}
\end{equation}
noting that in the last two inequalities, we make use of the locally quasi-uniform of the triangulation.

Then making use of the regularity  assumption~(\ref{regulardual}) of the dual problem which allows to bound the norm of $\bm \phi$ by the norm of ${\sf e}$, the proof of the theorem is completed.
\end{proof}

\subsection{{\em A posteriori} error bound in $L^2$-norm  {\em via} energy norm}
\subsubsection{Extension of the bilinear  form $a_h$ on larger space}\label{extension}
To state our {\em a posteriori} error bounds, we define the space
${\bf V}(h) = H_0^1(\Omega)^d+ {\bf V}_h$.
On ${\bf V}(h)$, we define the mesh-dependent energy norm for ${{\bm v}}\in{\bf V}(h) $:
\begin{equation}\label{DGnormex}
\vertiii{\bm {v}}=\big(\sum_{K\in {\mathcal{T}}_h}\|{\bm \varepsilon}({\bm v})\|^2_{0,K}+\sum_{e\in {\mathcal{E}}_h} {\texttt a} \|\llbracket{\mbox{\boldmath$v$}}\rrbracket\|^2_{0,e}\big)^{1/2}.
\end{equation}
We shall extend the DG form $a_h$ in~(\ref{ahuv}) to the space ${\bf V}(h)\times {\bf V}(h)$ by using the lifting operator. This way of expending the bilinear form can be seen from ~\cite{G-S-S}.
\subsubsection{Lifting operator}
We denote by $\bm {\Sigma}_h$ the following space:
$${\bm \Sigma}_h=\{\mbox{\boldmath$\tau$} \in L^2(\Omega)^{d\times d}: \mbox{\boldmath$\tau$} |_K \in {\mathbb P}_r(K)^{d\times d}, K \in {\mathcal T}_h\}.$$
Observing that $\mbox{\boldmath$\varepsilon$}({\bf V}_h) \subset {\bm \Sigma}_h$, and for the piecewise constant tensor ${\mathbb C}$, we also have ${\mathbb C}\mbox{\boldmath$\varepsilon$}({\bf V}_h) \subset {\bm \Sigma}_h$.
\begin{definition}\label{liftingdef} For ${{\bm v}}\in {\bf V}(h) $, the lifting operator ${\mathscr L}: {\bf V}(h) \rightarrow {\bm \Sigma}_h$ is defined by
$$\int_{\Omega} {\mathscr L}(\mbox{\boldmath$v$}):\mbox{\boldmath$\tau$}{\rm d}{\bm x}=\sum_{e\in{\mathcal E}_h}\int_{e} \llbracket{\mbox{\boldmath$v$}}\rrbracket\cdot \llrrbrace{({\mathbb C}\mbox{\boldmath$\tau$} ){\bm \nu}_e}{\rm d}A,~~~\forall \mbox{\boldmath$\tau$} \in {\bm \Sigma}_h.$$
\end{definition}
Note that ${\mathscr {L}}$ is well-defined, according to  the   Riesz representation theorem for the $L^2$ scalar product in ${\bm \Sigma}_h$ (see e.g.~\cite{G-S-S}, ~\cite{Wihler}). 

\begin{lemma}{\em (Stability of lifting operator)}.\label{liftingstable}
There exists a constant $C>0$ independent of the mesh size such that
 $$\|{\mathscr{L}}(\mbox{\boldmath$v$})\|_{0,\Omega} \leq C_{\rm{inv}}C^* \big ( \sum_{e \in {\mathcal E}_h}{\sf h}^{-1} \|\llbracket{\mbox{\boldmath$v$}}\rrbracket\|_{0,e}^2\big )^{1/2},~~~\mathrm {for~ any}~ {{\bm v}}\in {\bf V}(h).$$
\end{lemma}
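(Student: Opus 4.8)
The plan is to estimate $\|\mathscr{L}(\bm v)\|_{0,\Omega}$ by testing the defining identity of the lifting operator against a cleverly chosen $\bm\tau$. Since $\mathscr{L}(\bm v)\in\bm\Sigma_h$ and the defining relation in Definition~\ref{liftingdef} must hold for \emph{all} $\bm\tau\in\bm\Sigma_h$, the natural choice is $\bm\tau=\mathscr{L}(\bm v)$ itself, which yields
\begin{equation*}
\|\mathscr{L}(\bm v)\|_{0,\Omega}^2=\sum_{e\in{\mathcal E}_h}\int_e\llbracket\bm v\rrbracket\cdot\llrrbrace{({\mathbb C}\mathscr{L}(\bm v))\bm\nu_e}\,{\rm d}A.
\end{equation*}
First I would bound the right-hand side by Cauchy--Schwarz on each face, inserting the scaling weights ${\sf h}^{\pm 1/2}$:
\begin{equation*}
\|\mathscr{L}(\bm v)\|_{0,\Omega}^2\leq\Big(\sum_{e\in{\mathcal E}_h}{\sf h}^{-1}\|\llbracket\bm v\rrbracket\|_{0,e}^2\Big)^{1/2}\Big(\sum_{e\in{\mathcal E}_h}{\sf h}\,\|\llrrbrace{({\mathbb C}\mathscr{L}(\bm v))\bm\nu_e}\|_{0,e}^2\Big)^{1/2}.
\end{equation*}

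The second factor is where the element-level estimates come in. On a face $e=\partial K\cap\partial K'$, the average $\llrrbrace{({\mathbb C}\mathscr{L}(\bm v))\bm\nu_e}$ involves the traces of ${\mathbb C}\mathscr{L}(\bm v)$ from both sides; each such trace is a polynomial (since $\mathscr{L}(\bm v)|_K\in{\mathbb P}_r(K)^{d\times d}$ and ${\mathbb C}$ is piecewise constant, so ${\mathbb C}\mathscr{L}(\bm v)|_K\in{\mathbb P}_r(K)^{d\times d}$). Applying the inverse trace inequality (Lemma~\ref{invconstant}, part~i, in its matrix-valued form via the Remark after Lemma~\ref{tr}) gives $\|({\mathbb C}\mathscr{L}(\bm v))\bm\nu_e\|_{0,e}\leq C_{\rm inv}h_K^{-1/2}\|{\mathbb C}\mathscr{L}(\bm v)\|_{0,K}\leq C_{\rm inv}C^*h_K^{-1/2}\|\mathscr{L}(\bm v)\|_{0,K}$, using Remark~\ref{C^*} for the last step. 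Multiplying by ${\sf h}^{1/2}\leq h_K^{1/2}$ (and using local quasi-uniformity to handle the neighbouring element $K'$ with a comparable constant) cancels the negative power of $h_K$, so that
\begin{equation*}
\Big(\sum_{e\in{\mathcal E}_h}{\sf h}\,\|\llrrbrace{({\mathbb C}\mathscr{L}(\bm v))\bm\nu_e}\|_{0,e}^2\Big)^{1/2}\leq C_{\rm inv}C^*\Big(\sum_{K\in{\mathcal T}_h}\|\mathscr{L}(\bm v)\|_{0,K}^2\Big)^{1/2}=C_{\rm inv}C^*\|\mathscr{L}(\bm v)\|_{0,\Omega},
\end{equation*}
where the passage from a sum over faces to a sum over elements costs only a fixed constant depending on the shape-regularity (each element has a bounded number of faces).

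Combining the two displays, one factor of $\|\mathscr{L}(\bm v)\|_{0,\Omega}$ cancels, leaving exactly $\|\mathscr{L}(\bm v)\|_{0,\Omega}\leq C_{\rm inv}C^*\big(\sum_{e\in{\mathcal E}_h}{\sf h}^{-1}\|\llbracket\bm v\rrbracket\|_{0,e}^2\big)^{1/2}$, which is the claimed bound. The main obstacle — really the only subtle point — is bookkeeping the face-to-element conversion together with the ${\sf h}$ versus $h_K$ discrepancy on interior faces: one must invoke local quasi-uniformity (the constant $\kappa$ from the triangulation assumptions) to ensure that replacing ${\sf h}|_e=\min\{h_K,h_{K'}\}$ by either $h_K$ or $h_{K'}$ only changes constants, and absorb all such mesh-dependent constants into the single constant, leaving the explicit dependence on $C_{\rm inv}C^*$ as stated. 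Everything else is a routine Cauchy--Schwarz plus inverse-inequality argument.
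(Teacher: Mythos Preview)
Your proof is correct and takes essentially the same approach as the paper: both apply Cauchy--Schwarz to the face integrals and then the inverse trace inequality (Lemma~\ref{invconstant}) together with Remark~\ref{C^*} to pass from face norms of ${\mathbb C}\mathscr{L}(\bm v)$ back to $\|\mathscr{L}(\bm v)\|_{0,\Omega}$. The only cosmetic difference is that the paper writes $\|\mathscr{L}(\bm v)\|_{0,\Omega}$ as a supremum over test tensors in $\bm\Sigma_h$ and estimates the supremand, whereas you test directly with $\bm\tau=\mathscr{L}(\bm v)$; since the supremum is attained precisely at this choice, the two arguments are equivalent.
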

\begin{proof}
By the definition of the operator ${\mathscr L}$ and from Cauchy-Schwarz's inequality we have for any ${{\bm v}}\in {\bf V}(h)$:
\begin{equation}
\begin{split}
\|{\mathscr L}(\mbox{\boldmath$v$})\|_{0,\Omega}&=\sup_{{\bm z} \in {\bf V}_h}\frac{\int_\Omega {\mathscr L}({\bm v})\cdot {\bm z}{\rm d}{\bm x}}{\|\bm z\|_{0,\Omega}}\\
&=\sup_{{\bm z} \in {\bf V}_h}\frac{\sum_{e\in {\mathcal E}_h}\int_e \llbracket{\mbox{\boldmath$v$}}\rrbracket\cdot \llrrbrace{({\mathbb C} {\bm z}) {\bm \nu}_e}{\rm d}A}{\|\bm z\|_{0,\Omega}}\\
&\leq \sup_{{\bm z} \in {\bf V}_h} \frac{\left( \sum_{e \in {\mathcal E}_h}{\sf h}^{-1} \|\llbracket{\mbox{\boldmath$v$}}\rrbracket\|_{0,e}^2\right)^{1/2}\left(  \sum_{e \in {\mathcal E}_h}{\sf h} \|\llrrbrace{({\mathbb C} {\bm z}) {\bm \nu}_e}\|_{0,e}^2\right)^{1/2}}{\|\bm z\|_{0,\Omega}},\\
& \leq C_{\rm{inv}}C^* \big ( \sum_{e \in {\mathcal E}_h}{\sf h}^{-1} \|\llbracket{\mbox{\boldmath$v$}}\rrbracket\|_{0,e}^2\big )^{1/2};
\end{split}
\end{equation}
noting that in  the above proof, we have used the inequalities
 $${\sf h}^{1/2}\|\llrrbrace{({\mathbb C} {\bm z}) {\bm \nu}_e}\|_{0,e}\leq \frac{1}{2}C_{\rm{inv}}\left (\|{\mathbb C} {\bm z}\|^2_{0, K^+}+\|{\mathbb C} {\bm z}\|^2_{0, K^-}\right)^{1/2},$$
coming directly from  the inverse inequality~~(\ref{invconstant}) and the locally quasi-uniform  of the mesh.
 \end{proof}
With these definitions, we can now introduce the following (non-consistent) DG form on ${\bf V}(h) \times {\bf V}(h)$:
\begin{equation}\label{newDG}
\begin{split}
 A(\mbox{\boldmath$u$},\mbox{\boldmath$v$} )&=\sum_{K\in {\mathcal{T}}_h} \int_{K}{\bm \sigma}({{\bm u}}):{\bm \varepsilon}({{\bm v}}){\rm d}{\bm x}-\sum_{K\in {\mathcal{T}}_h} \int_{K}{\bm \varepsilon} ({{\bm u}}):{\mathscr{L}}(\mbox{\boldmath$v$}){\rm d}{\bm x}\\
& -\sum_{K\in {\mathcal{T}}_h} \int_{K}{\bm \varepsilon}({{\bm v}}):{\mathscr{L}}(\mbox{\boldmath$u$}){\rm d}{\bm x}+\sum_{e\in \mathcal{E}_h} \int_{e}{\texttt a} \llbracket{\mbox{\boldmath$u$}}\rrbracket\cdot\llbracket{\mbox{\boldmath$v$}}\rrbracket{\rm d}A;
\end{split}
\end{equation}
where $\alpha$ is a positive constant which we will consider later on to establish the coercivity of the bilinear form $A(\cdot,\cdot)$.\\

The bilinear form $A$ above  is continuous and coercive on the entire space
${\bf V} (h) \times {\bf V} (h)$.
  Setting
$\alpha_{\rm {min}} = 4 C_{\textrm{inv}}^2(C^*)^2c_*^{-1}$.
There are constants $M$  and $\kappa$ independent of $h$ such  that
$$|A ({{\bm u}},{{\bm v}})| \leq M \vertiii{{\bm u}} \cdot \vertiii{{\bm v}},~~~\forall {{\bm u}},{{\bm v}} \in {\bf V} (h),$$
and for $\alpha \geq\alpha_{\rm {min}} $:
$$A ({{\bm u}},{{\bm u}}) \geq \kappa \vertiii{{\bm u}}^2,~~~\forall {{\bm u}} \in {\bf V} (h),$$
with DG norm $\vertiii{\cdot}$ defined in~(\ref{DGnormex}).

 Furthermore, since
$A = a_h$
on ${\bf V}_h \times {\bf V}_ h$,
and 
$A = a$ on $H^1_0(\Omega)^d\times H^1_0(\Omega)^d$,
the form $A$ can be viewed as an extension of the two forms $a_h$ and $a$ to the space
${\bf V} (h) \times {\bf V} (h)$.

The discrete problem then can be equivalently stated as follows: 
\begin{lemma}(\em Non-consistent DG formulation).
The DG formulation ~(\ref{DGsta}) is equivalent to finding  ${{\bm z}_h} \in  {\bf V}_h$ such that 
\begin{equation}\label{DGAbi}
 A({{\bm z}_h},{\bm  v} ) =( {\bm r}, {{\bm v}})_\Omega~~\mbox{ for all}~ {{\bm v}} \in {\bf V}_h.
\end{equation}
\end{lemma}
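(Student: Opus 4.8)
The plan is to read off the equivalence directly from the observation, recorded in the paragraph just above the lemma, that $A$ coincides with $a_h$ on ${\bf V}_h\times{\bf V}_h$. Once that identity is in hand, problems~(\ref{DGsta}) and~(\ref{DGAbi}) are literally the same finite-dimensional linear system, so they have the same solution set, and unique solvability of either follows from the Lax--Milgram theorem together with the boundedness and (for $\alpha\ge\alpha_{\rm{min}}$) coercivity of $A$ on ${\bf V}(h)$, in particular on the subspace ${\bf V}_h$. So essentially the whole proof reduces to establishing $A=a_h$ on ${\bf V}_h\times{\bf V}_h$.

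To make that identity explicit I would fix ${\bm u},{\bm v}\in{\bf V}_h$ and compare~(\ref{newDG}) with~(\ref{ahuv}) term by term: the first (volume) term and the last (penalty) term in~(\ref{newDG}) are already identical to those in~(\ref{ahuv}), so the task is to show that the two lifting terms reproduce the two symmetrization/consistency face integrals of $a_h$. Since ${\bm \varepsilon}({\bf V}_h)\subset{\bm \Sigma}_h$, the tensor fields ${\bm \varepsilon}({\bm u})$ and ${\bm \varepsilon}({\bm v})$ are admissible test functions in Definition~\ref{liftingdef}. Choosing ${\bm \tau}={\bm \varepsilon}({\bm u})$ in the defining relation for ${\mathscr L}({\bm v})$, and using the element-wise identity ${\mathbb C}{\bm \varepsilon}({\bm u})={\bm \sigma}({\bm u})$ (valid because $\mathbb C$ is piecewise constant), gives
\begin{equation*}
\sum_{K\in{\mathcal T}_h}\int_K{\bm \varepsilon}({\bm u}):{\mathscr L}({\bm v})\,{\rm d}{\bm x}
=\int_\Omega{\mathscr L}({\bm v}):{\bm \varepsilon}({\bm u})\,{\rm d}{\bm x}
=\sum_{e\in{\mathcal E}_h}\int_e\llbracket{\bm v}\rrbracket\cdot\llrrbrace{({\bm \sigma}({\bm u})){\bm \nu}_e}\,{\rm d}A,
\end{equation*}
and, by the same argument with ${\bm \tau}={\bm \varepsilon}({\bm v})$ in the relation for ${\mathscr L}({\bm u})$,
\begin{equation*}
\sum_{K\in{\mathcal T}_h}\int_K{\bm \varepsilon}({\bm v}):{\mathscr L}({\bm u})\,{\rm d}{\bm x}
=\sum_{e\in{\mathcal E}_h}\int_e\llbracket{\bm u}\rrbracket\cdot\llrrbrace{({\bm \sigma}({\bm v})){\bm \nu}_e}\,{\rm d}A.
\end{equation*}
Substituting these two identities into~(\ref{newDG}) and using the symmetry of the scalar product turns the terms $-\sum_K\int_K{\bm \varepsilon}({\bm u}):{\mathscr L}({\bm v})$ and $-\sum_K\int_K{\bm \varepsilon}({\bm v}):{\mathscr L}({\bm u})$ precisely into the two face integrals $-\sum_e\int_e\llrrbrace{({\bm \sigma}({\bm u})){\bm \nu}_e}\cdot\llbracket{\bm v}\rrbracket$ and $-\sum_e\int_e\llrrbrace{({\bm \sigma}({\bm v})){\bm \nu}_e}\cdot\llbracket{\bm u}\rrbracket$ appearing in~(\ref{ahuv}); hence $A({\bm u},{\bm v})=a_h({\bm u},{\bm v})$.

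Finally I would conclude: since $A$ and $a_h$ agree on ${\bf V}_h\times{\bf V}_h$, a function ${\bm z}_h\in{\bf V}_h$ satisfies $a_h({\bm z}_h,{\bm v})=({\bm r},{\bm v})_\Omega$ for all ${\bm v}\in{\bf V}_h$ if and only if it satisfies $A({\bm z}_h,{\bm v})=({\bm r},{\bm v})_\Omega$ for all ${\bm v}\in{\bf V}_h$, which is the asserted equivalence; existence and uniqueness of such a ${\bm z}_h$ follow from Lax--Milgram together with the continuity and coercivity of $a_h$ on ${\bf V}_h$ established in Lemma~\ref{coerah}. There is no genuine obstacle in this argument; the only two points that need to be verified carefully are the ones used above, namely that ${\bm \varepsilon}({\bm u}),{\bm \varepsilon}({\bm v})\in{\bm \Sigma}_h$ so that they may legitimately be inserted as test tensors in Definition~\ref{liftingdef}, and that ${\mathbb C}{\bm \varepsilon}(\cdot)={\bm \sigma}(\cdot)$ holds element-wise, which is exactly where the piecewise-constancy of the stiffness tensor $\mathbb C$ enters.
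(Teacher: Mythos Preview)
Your proof is correct and follows exactly the approach the paper intends: the paper records (without proof) just before the lemma that $A=a_h$ on ${\bf V}_h\times{\bf V}_h$ and treats the lemma as an immediate consequence, and you simply spell out the verification of that identity via the defining property of the lifting operator. The only minor remark is that the existence/uniqueness discussion, while correct, is not strictly part of the equivalence being asserted.
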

\subsubsection{{ A posteriori} error bound}
The proof of the {\em a posteriori} error bound is based on the approximation results in~\cite{K-P} which allow us to find a conforming finite element function which is close to any discontinuous one. 
\begin{lemma}\label{Ka}
 ({\rm Bounding the nonconforming part {\em via} jumps, cf.~\cite{K-P}}). 
 Let ${\bf V}_h$ be the space of piecewise polynomial functions as defined in ~(\ref{spacevh}), then for any
function ${\bm  z}_h \in {\bf V}_h$ there exists a function ${\bm  z}_h^c \in  {\bf V}_c={\bf V}_h \cap { H}^1_0(\Omega)^d$ such that
$$\|{{\bm z}_h} - {\bm  z}_h^c\|^2_{0,\Omega} \leq C_1\big (\sum_{e\in {\mathcal{E}}_h}{ \sf h} \|\llbracket{{\bm z}_h}\rrbracket\|^2_{0,e }\big),$$
and
$$\sum_{K\in {\mathcal T}_h }\|\nabla  ({{\bm z}_h} - {\bm  z}_h ^c)\|^2_{0,K} \leq C_2 \big(\sum_{e\in {\mathcal{E}}_h}{\sf h}^{-1}\| \llbracket{{\bm z}_h}\rrbracket\|^2_{0,e}\big),$$
where $C_1 , C_2 > 0$ constants depending on the shape-regularity of the triangulation.
\end{lemma}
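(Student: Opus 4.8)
The plan is to build $\bm z_h^c$ explicitly through an Oswald-type nodal averaging operator and then control the two norms elementwise by scaling to a reference element. First I would fix, once and for all, a set of Lagrange interpolation nodes generating the conforming space $\bm V_c=\bm V_h\cap H^1_0(\Omega)^d$. For an interior node $\nu$ shared by the elements $K_1,\dots,K_{m_\nu}$ I set
\[
\bm z_h^c(\nu) := \frac{1}{m_\nu}\sum_{i=1}^{m_\nu}\big(\bm z_h|_{K_i}\big)(\nu),
\]
while for a node $\nu$ lying on $\partial\Omega$ I set $\bm z_h^c(\nu):=\bm 0$; this prescribes a unique $\bm z_h^c\in\bm V_c$, and the choice at boundary nodes is precisely what guarantees $\bm z_h^c\in H^1_0(\Omega)^d$.

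Next, on a fixed element $K\in\mathcal T_h$ the restriction $(\bm z_h-\bm z_h^c)|_K$ is a polynomial in $\mathbb P_r(K)^d$, hence determined by its values at the nodes $\nu$ of $K$. Equivalence of norms on the finite-dimensional space $\mathbb P_r(\widehat K)^d$ over a reference element, together with the affine scaling $\|\cdot\|_{0,K}\simeq h_K^{d/2}\|\cdot\|_{0,\widehat K}$ and the inverse estimate of Lemma~\ref{inv}, gives
\[
\|\bm z_h-\bm z_h^c\|_{0,K}^2\le C\,h_K^{d}\sum_{\nu\in\mathcal N(K)}|(\bm z_h-\bm z_h^c)(\nu)|^2,\qquad \|\nabla(\bm z_h-\bm z_h^c)\|_{0,K}^2\le C\,h_K^{d-2}\sum_{\nu\in\mathcal N(K)}|(\bm z_h-\bm z_h^c)(\nu)|^2 .
\]
For an interior node $\nu\in\mathcal N(K)$ one has $(\bm z_h|_K)(\nu)-\bm z_h^c(\nu)=m_\nu^{-1}\sum_i\big((\bm z_h|_K)(\nu)-(\bm z_h|_{K_i})(\nu)\big)$, a sum of at most $m_\nu-1$ nodal jumps $\llbracket \bm z_h \rrbracket(\cdot)$ taken on interior faces of the vertex patch $\omega_\nu:=\bigcup_i K_i$ (telescoping through neighbouring elements); for a boundary node the difference is simply $\llbracket \bm z_h \rrbracket(\nu)$ with the boundary convention. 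Since shape-regularity bounds $m_\nu$ and the number of faces in $\omega_\nu$ by a universal constant, $|(\bm z_h-\bm z_h^c)(\nu)|^2\le C\sum_{e\subset\omega_\nu}\sum_{\nu'\in\mathcal N(e)}|\llbracket \bm z_h \rrbracket(\nu')|^2$.

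It then remains to pass from nodal jump values to the $L^2$-norm of the jump on each face. On a face $e$, $\llbracket \bm z_h \rrbracket|_e\in\mathbb P_r(e)^d$, so norm equivalence and scaling yield $\sum_{\nu'\in\mathcal N(e)}|\llbracket \bm z_h \rrbracket(\nu')|^2\simeq |e|^{-1}\|\llbracket \bm z_h \rrbracket\|_{0,e}^2$; using $|e|\simeq h_K^{d-1}$ and the local quasi-uniformity $h_K\simeq h_{K'}\simeq{\sf h}|_e$, this gives $h_K^{d}|\llbracket \bm z_h \rrbracket(\nu')|^2\le C\,{\sf h}\,\|\llbracket \bm z_h \rrbracket\|_{0,e}^2$ and $h_K^{d-2}|\llbracket \bm z_h \rrbracket(\nu')|^2\le C\,{\sf h}^{-1}\|\llbracket \bm z_h \rrbracket\|_{0,e}^2$. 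Inserting these into the elementwise bounds and summing over all $K$ and all nodes, and noting that each face is shared by at most two elements while each element carries a number of nodes depending only on $r$ and $d$, each quantity $\|\llbracket \bm z_h \rrbracket\|_{0,e}^2$ is counted only boundedly often. This produces the two asserted estimates with constants $C_1,C_2$ depending only on the shape-regularity of the mesh (and on $r$, $d$).

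The main obstacle is not any individual inequality — each is a routine scaling argument — but the combinatorial bookkeeping: one must verify that the multiplicities (elements and faces meeting at a node, nodes per element or per face) and the replacement of the various mesh sizes $h_K$ inside a vertex patch by the single face scale ${\sf h}|_e$ are all uniformly controlled by shape-regularity and local quasi-uniformity, independently of $h$. A secondary point needing care is exactly how the averaging operator is forced to vanish at boundary nodes so that $\bm z_h^c$ genuinely belongs to $\bm V_c=\bm V_h\cap H^1_0(\Omega)^d$, with the boundary faces then handled by the same estimates via the convention $\llbracket \bm z_h \rrbracket=\bm z_h$ on $\mathcal E_h^B$.
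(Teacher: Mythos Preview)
Your proposal is correct and follows the standard Oswald-averaging construction that is precisely the content of the cited reference~\cite{K-P}; the paper itself does not give a proof of this lemma but simply invokes that reference. The nodal averaging operator, the scaling to a reference element, the telescoping of nodal differences into face jumps, and the use of shape-regularity to bound patch multiplicities are exactly the ingredients of the Karakashian--Pascal argument, so there is nothing to compare.
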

\begin{theorem}\label{IP}{\rm ($L^2-$}a posteriori {\rm error bound via energy norm)}.
 Let ${\bm z} \in { H}^1_{0}(\Omega)^d$ be the solution to the stationary elasticity problem~(\ref{sta}) and ${{\bm z}_h} \in {\bf V}_h$ be the DG approximation of $\mbox{\boldmath$z$}$  as in~(\ref{DGAbi}). Then
$$\|\mbox{\boldmath$z$}-{{\bm z}_h}\|_{0,\Omega} \leq 
 \mathscr{E}_{\rm IP}({{\bm z}_h}, \mbox{\boldmath$r$},{\mathcal T}_h),$$
where 
\begin{equation*}
\begin{split}
\mathscr{E}_{\rm IP}({{\bm z}_h}, \mbox{\boldmath$r$}, {\mathcal T}_h ) :=C \bigg( \sum_{K \in {\mathcal{T}}_h}{h}_K^2 \|\mbox{\boldmath$r$}+\nabla\cdot \mbox{\boldmath$\sigma$}({{\bm z}_h})\|^2_{0,K}&+ \sum_{e\in {\mathcal E}^I_{h}}{\sf h}\|\llbracket{({\bm \sigma}({{\bm z}_h})) {\bm \nu}_e}\rrbracket\|^2_{0,e}\\
&+\sum_{e\in {\mathcal{E}}_h}({\sf h}+\alpha {\sf h}^{-1})\|\llbracket{{\bm z}_h}\rrbracket\|^2_{0,e} \bigg)^{1/2},
\end{split}
\end{equation*}
where $C$ is a positive constant independent of ${{\bm z}_h}, {\bm r}, h$ and ${\mathcal T}_h$.
\end{theorem}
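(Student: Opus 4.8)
The plan is to estimate $\|{\bm z}-{\bm z}_h\|_{0,\Omega}$ by first passing to a conforming companion ${\bm z}_h^c \in {\bf V}_c$ supplied by Lemma~\ref{Ka}, then controlling the error of that conforming object in the energy norm $\vertiii{\cdot}$ via the coercivity of the extended form $A$, and finally converting the energy estimate into an $L^2$ estimate using the Poincar\'e and Korn inequalities already invoked in the proof of Theorem~\ref{absm}. Concretely, I would write
\[
\|{\bm z}-{\bm z}_h\|_{0,\Omega} \le \|{\bm z}-{\bm z}_h^c\|_{0,\Omega} + \|{\bm z}_h^c-{\bm z}_h\|_{0,\Omega},
\]
where the second term is immediately bounded by $\big(C_1\sum_{e}{\sf h}\|\llbracket{\bm z}_h\rrbracket\|_{0,e}^2\big)^{1/2}$ from Lemma~\ref{Ka}, which already produces the $\sum_e {\sf h}\|\llbracket{\bm z}_h\rrbracket\|^2_{0,e}$ contribution in $\mathscr{E}_{\rm IP}$. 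Since ${\bm z}-{\bm z}_h^c \in H^1_0(\Omega)^d$, Poincar\'e and Korn give $\|{\bm z}-{\bm z}_h^c\|_{0,\Omega} \le C\,\vertiii{{\bm z}-{\bm z}_h^c}$, so the core of the argument is to bound $\vertiii{{\bm z}-{\bm z}_h^c}$.

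For that bound I would use coercivity of $A$ on ${\bf V}(h)$: $\kappa\vertiii{{\bm z}-{\bm z}_h^c}^2 \le A({\bm z}-{\bm z}_h^c,\,{\bm z}-{\bm z}_h^c)$. I'd split the test slot by writing ${\bm z}-{\bm z}_h^c = ({\bm z}-{\bm z}_h^c) $ and then decompose the first argument through ${\bm z}_h$, i.e. ${\bm z}-{\bm z}_h^c = ({\bm z}-{\bm z}_h) + ({\bm z}_h-{\bm z}_h^c)$. The term $A({\bm z}_h-{\bm z}_h^c,\,{\bm z}-{\bm z}_h^c)$ is handled by continuity of $A$ and the energy-norm bound on ${\bm z}_h-{\bm z}_h^c$ from Lemma~\ref{Ka} (its gradient part $\sum_K\|\nabla({\bm z}_h-{\bm z}_h^c)\|_{0,K}^2$ together with its jump part, noting $\llbracket{\bm z}_h-{\bm z}_h^c\rrbracket = \llbracket{\bm z}_h\rrbracket$ since ${\bm z}_h^c$ is conforming), which again feeds the jump terms of $\mathscr{E}_{\rm IP}$. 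For $A({\bm z}-{\bm z}_h,\,{\bm z}-{\bm z}_h^c)$ I would exploit that on the conforming test function the lifting terms involving ${\mathscr L}({\bm z}-{\bm z}_h^c)$ drop (since $\llbracket{\bm z}-{\bm z}_h^c\rrbracket={\bm 0}$), and that $A=a$ on $H^1_0 \times H^1_0$ while $A=a_h$ on ${\bf V}_h\times{\bf V}_h$; using the defining relations $a({\bm z},{\bm v})=({\bm r},{\bm v})_\Omega$ for all ${\bm v}\in H^1_0(\Omega)^d$ and $A({\bm z}_h,{\bm v})=({\bm r},{\bm v})_\Omega$ for all ${\bm v}\in{\bf V}_h$ (Lemma on non-consistent DG formulation), the cross term reduces to a residual pairing against ${\bm z}-{\bm z}_h^c$. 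I'd then integrate by parts elementwise: the interior contributions produce $\sum_K({\bm r}+\nabla\cdot{\bm \sigma}({\bm z}_h))\cdot({\bm z}-{\bm z}_h^c)$ and the interelement contributions produce $\sum_{e\in{\mathcal E}_h^I}\llbracket{\bm \sigma}({\bm z}_h){\bm \nu}_e\rrbracket\cdot\llrrbrace{{\bm z}-{\bm z}_h^c}$, plus a lifting remainder $\sum_K{\bm \varepsilon}({\bm z}-{\bm z}_h^c):{\mathscr L}({\bm z}_h)$ which by Lemma~\ref{liftingstable} is controlled by the jump terms times $\vertiii{{\bm z}-{\bm z}_h^c}$.

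The residual and flux-jump terms I would estimate by Cauchy--Schwarz, the trace inequality (Lemma~\ref{tr}), and a Poincar\'e-type bound on $H^1_0$, giving factors $h_K\|{\bm r}+\nabla\cdot{\bm \sigma}({\bm z}_h)\|_{0,K}$ and ${\sf h}^{1/2}\|\llbracket{\bm \sigma}({\bm z}_h){\bm \nu}_e\rrbracket\|_{0,e}$ multiplied by $\|\nabla({\bm z}-{\bm z}_h^c)\|_{0,\cdot}\le C\vertiii{{\bm z}-{\bm z}_h^c}$ (up to absorbing constants and using local quasi-uniformity to pass between $h_K$ and ${\sf h}$). Collecting all pieces gives $\kappa\vertiii{{\bm z}-{\bm z}_h^c}^2 \le C\,\mathscr{R}\,\vertiii{{\bm z}-{\bm z}_h^c}$ with $\mathscr{R}$ the square-bracketed estimator, hence $\vertiii{{\bm z}-{\bm z}_h^c}\le C\mathscr{R}$; combining with the two initial splittings finishes the proof. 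I expect the main obstacle to be the bookkeeping around the lifting operator: one must be careful that the non-consistency of $A$ only manifests through the ${\mathscr L}({\bm z}_h)$ remainder (the ${\mathscr L}$ term on the conforming test argument vanishing), and that this remainder is genuinely absorbable into $\vertiii{{\bm z}-{\bm z}_h^c}$ rather than reintroducing an uncontrolled quantity — this is where Lemma~\ref{liftingstable} and the choice $\alpha\ge\alpha_{\min}$ are essential, and it is the step most prone to hidden constant-tracking errors. A secondary subtlety is ensuring the $\alpha{\sf h}^{-1}$-weighted jump term in $\mathscr{E}_{\rm IP}$ is correctly produced — it enters precisely through the $\vertiii{{\bm z}_h-{\bm z}_h^c}$ contributions and the penalty part of $A$ acting on the conforming correction.
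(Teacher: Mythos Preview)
Your overall strategy matches the paper's closely: the same conforming companion ${\bm z}_h^c$, the same coercivity argument for ${\bm e}_c:={\bm z}-{\bm z}_h^c$, the same handling of ${\bm e}_d:={\bm z}_h^c-{\bm z}_h$ via Lemma~\ref{Ka}, and the same final passage from energy to $L^2$ via Poincar\'e. However, there is a genuine gap at the step where you estimate the interior residual and flux-jump terms.

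After integration by parts you obtain $\sum_K \int_K ({\bm r}+\nabla\cdot{\bm\sigma}({\bm z}_h))\cdot {\bm e}_c$, and you claim that Cauchy--Schwarz together with ``a Poincar\'e-type bound on $H^1_0$'' yields the factor $h_K\|{\bm r}+\nabla\cdot{\bm\sigma}({\bm z}_h)\|_{0,K}$ multiplied by $\|\nabla{\bm e}_c\|$. This would require $\|{\bm e}_c\|_{0,K}\le Ch_K\|\nabla{\bm e}_c\|_{0,K}$ elementwise, but no such local Poincar\'e inequality holds: ${\bm e}_c$ neither vanishes on $\partial K$ nor has zero mean on $K$. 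The global Poincar\'e constant on $H^1_0(\Omega)^d$ depends on the diameter of $\Omega$, not on $h_K$. The same defect contaminates the flux-jump term, where you need ${\sf h}^{-1/2}\|{\bm e}_c\|_{0,e}$ controlled by $\|\nabla{\bm e}_c\|$.

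The paper fixes this with one ingredient you omitted: an elementwise-constant approximation $\tilde{\bm e}_c\in{\bf V}_h$ of ${\bm e}_c$ (Lemma~\ref{bestapp}). Since $\tilde{\bm e}_c\in{\bf V}_h$, the discrete equation~(\ref{DGAbi}) gives $A({\bm z}_h,\tilde{\bm e}_c)=({\bm r},\tilde{\bm e}_c)_\Omega$, so the residual pairing is against ${\bm\eta}:={\bm e}_c-\tilde{\bm e}_c$ rather than ${\bm e}_c$ itself. For ${\bm\eta}$ one does have $\|{\bm\eta}\|_{0,K}\le Ch_K|{\bm e}_c|_{1,K}$ and (via the trace inequality, Lemma~\ref{tr}) ${\sf h}^{-1/2}\|{\bm\eta}\|_{0,\partial K}\le C\|\nabla{\bm e}_c\|_{0,K}$, which is what produces the local powers of $h_K$ and ${\sf h}$ in $\mathscr{E}_{\rm IP}$. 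Since $\nabla{\bm\eta}|_K=\nabla{\bm e}_c|_K$, the lifting and penalty terms are unaffected by this substitution. This Galerkin-orthogonality-plus-local-approximation step is the standard localization mechanism in residual-type a~posteriori analysis and is the missing piece in your sketch.
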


\begin{proof}
We decompose the error as follows
$${\bm z}- {{\bm z}_h}=\underbrace{{\bm z}- {\bm  z}_h^c}_{{\bm e}_c  \in {H}^1_0(\Omega)^d}+\underbrace{{\bm  z}_h^c- {\bm z}_h}_{{\bm e}_d\in {\bf V}_h}  ,$$
noting that ${\bm e}_c  \in {H}^1_0(\Omega)^d$ and ${\bm e}_d\in {\bf V}_h  $.
where ${\bm z}_h^c \in {\bf V}_c$ is the conforming approximation of ${\bm z}_h$ from Lemma~\ref{Ka}.

 Let ${\tilde{\bm e}}_c$ denote an  approximation of ${\bm e}_c$ in the space of element-wise constant vector functions as in Lemma~\ref{bestapp}, we have that ${\tilde{\bm e}}_c\in {\bf V}_h $. Define $\mbox{\boldmath$\eta$}=\mbox{\boldmath$e$}_c-{\tilde{\bm e}}_c$, we then have that
 $$A({\bm e},{\bm e}_c)=A({\bm z},{\bm e}_c)-A({{\bm z}_h},\mbox{\boldmath$e$}_c)=( \mbox{\boldmath$r$},\mbox{\boldmath$e$}_c )_{\Omega}-A({{\bm z}_h},\mbox{\boldmath$\eta$})-A({{\bm z}_h},{\tilde{\bm e}}_c).$$
 $$=( \mbox{\boldmath$r$},\mbox{\boldmath$\eta$})_{\Omega}-A({{\bm z}_h},\mbox{\boldmath$\eta$}).$$
 Moreover, noting that $A({{\bm z}_h},{\tilde{\bm e}}_c)=( \mbox{\boldmath$r$},{\tilde{\bm e}}_c)_{\Omega}$,
 then we have
 \begin{equation}\label{ec}
c_* \|{\bm \varepsilon}(\mbox{\boldmath$e$}_c)\|^2_{0,\Omega}\leq A(\mbox{\boldmath$e$}_c,\mbox{\boldmath$e$}_c)=( \mbox{\boldmath$r$},\mbox{\boldmath$\eta$})_{\Omega}-A({{\bm z}_h},\mbox{\boldmath$\eta$})-A(\mbox{\boldmath$e$}_d,\mbox{\boldmath$e$}_c).
 \end{equation}
 {\em Now our purpose is to bound the terms on the right hand side of~(\ref{ec}) }.
 \begin{enumerate}[i)]
 \item
Making use of integrating by parts element-wise $A({{\bm z}_h},\mbox{\boldmath$\eta$})$, then the sum of the first two terms on the right hand side of~(\ref{ec})  can be written as follows:
 \begin{equation}\label{2term}
 \begin{split}
 ( \mbox{\boldmath$r$},\mbox{\boldmath$\eta$})_\Omega-A({{\bm z}_h},\mbox{\boldmath$\eta$})=\sum_{K \in {\mathcal T}_h} \int_K  \left( \mbox{\boldmath$r$}+ \nabla \cdot \mbox{\boldmath$\sigma$}({{\bm z}_h})\right)\cdot\mbox{\boldmath$\eta$}{\mathrm d}{\bm x}-\sum_{e \in  {\mathcal E}_h^I}\int_e\llrrbrace{\mbox{\boldmath$\eta$}}\cdot \llbracket(\mbox{\boldmath$\sigma$}({{\bm z}_h}) ) {\bm \nu}_{e}\rrbracket{\rm d}A~\hspace{1cm}\\
 +\sum_{K \in {\mathcal T}_h} \int_K {\mathscr L}({{\bm z}_h}):\mbox{\boldmath$\varepsilon$}(\mbox{\boldmath$\eta$}){\mathrm d}{\bm x} -\sum_{e\in {\mathcal E}_h}\int_e\frac{\alpha}{{\sf h}}\llbracket\mbox{\boldmath$\eta$}\rrbracket\cdot \llbracket{{\bm z}_h}\rrbracket{\rm d}A~\hspace{2cm}.
  \end{split}
 \end{equation}
  Now we work on the parts on the right hand-side of~(\ref{2term}).\\
  {\em  For the first part of~(\ref{2term})}, by employing the discrete Cauchy-Schwarz's inequality  we have
  \begin{equation}\label{term2a}
   \begin{split}
\big|\sum_{K \in {\mathcal T}_h} \int_K ({\bm r} &+\nabla \cdot \mbox{\boldmath$\sigma$}({{\bm z}_h}))\cdot \mbox{\boldmath$\eta$}{\mathrm d}{\bm x}\big| \\
&\leq  \big(\sum_{{K\in {\mathcal T}_h}}h_K^2\|{\bm r} +\nabla \cdot \mbox{\boldmath$\sigma$}({{\bm z}_h})\|^2_{0,K}\big)^{1/2}\big(\sum_{K\in {\mathcal T}_h}h_K^{-2}\|\mbox{\boldmath$\eta$}\|^2_{0,K}\big)^{1/2}\\
  &\leq C\big( \sum_{K\in {\mathcal T}_h}h_K^2\| {\bm r} +\nabla \cdot \mbox{\boldmath$\sigma$}({{\bm z}_h})\|^2_{0,K}\big)^{1/2}\|\nabla \mbox{\boldmath$e$}_c\|_{0,\Omega},
   \end{split}
  \end{equation} 
  noting the last inequality comes from the approximation~(\ref{bestapp}).\\
 {\em For the second part of~(\ref{2term})}
\begin{equation}\label{term2b}
\begin{split}
\big |\sum_{e \in  {\mathcal E}^I_h}\int_{e}\llrrbrace{\mbox{\boldmath$\eta$}}\cdot &\llbracket(\mbox{\boldmath$\sigma$}({{\bm z}_h}) ){\bm \nu}_{e}\rrbracket{\rm d}A\big|\\
&\leq \frac{1}{2}\big(\sum_{K \in {\mathcal T}_h}{\sf h }^{-1}\|\mbox{\boldmath$\eta$}\|^2_{0,\partial K}\big)^{1/2}\big(\sum_{e \in  {\mathcal E}_h^I}{\sf h }\|\llbracket(\mbox{\boldmath$\sigma$}({{\bm z}_h}) ) \mbox{\boldmath$\nu$}_{e}\rrbracket\|^2_{0,e}\big)^{1/2}.
\end{split}
\end{equation}
Now to estimate the first factor of~(\ref{term2b}), we will use the trace theorem~(\ref{tr}) in conjunction with the  approximation property~(\ref{bestapp}):
\begin{equation}
\begin{split}
{\sf h }^{-1}\|\mbox{\boldmath$\eta$}\|^2_{0,\partial K}&\leq C_{\textrm{tr}}{\sf h }^{-1}\left( h_K^{-1}\|\bm \eta\|_{0,K}^2+h_K\|\nabla {\bm \eta}\|_{0,K}^2\right)\\
&\leq C \left ( h_K^{-2} \|\bm \eta\|_{0,K}^2 +\|\nabla {\bm e}_c\|_{0,K}^2\right)\\
&\leq C \|\nabla {\bm e}_c\|_{0,K}^2;
\end{split}
 \end{equation}
 noting that in the last two inequalities, we make use of the fact that $\nabla \mbox{\boldmath$\eta$}|_K=\nabla \mbox{\boldmath$e$}_c|_K$ for all $K\in {\mathcal T}_h$, the locally quasi-uniform property  of the mesh
and the inverse inequality~(\ref{inv}), then  ~(\ref{term2b}) infers that
 \begin{equation}
 \begin{split}
 \big|\sum_{e \in  {\mathcal E}^I_h}\int_{e}\llrrbrace{\mbox{\boldmath$\eta$}}\cdot\llbracket(\mbox{\boldmath$\sigma$}({{\bm z}_h}) ) {\bm \nu}_{e}\rrbracket{\rm d}A\big|
 &\leq C \|\nabla {\bm e}_c\|_{0,K} \times \big (\sum_{e \in  {\mathcal E}_h^I}{\sf h }\|\llbracket(\mbox{\boldmath$\sigma$}({{\bm z}_h}))\mbox{\boldmath$\nu$}_{e}\rrbracket\|^2_{0,e}\big)^{1/2}\\
 &\leq C \|\nabla \mbox{\boldmath$e$}_c\|_{0,\Omega}\big(\sum_{e \in  {\mathcal E}_h^I}{\sf h}\|\llbracket(\mbox{\boldmath$\sigma$}({{\bm z}_h})) \mbox{\boldmath$\nu$}_{e}\rrbracket\|^2_{0,e}\big)^{1/2}.
 \end{split}
\end{equation}
{\em For the third part of~(\ref{2term})}
\begin{equation}\label{term2c}
\begin{split}
\big|\sum_{K \in {\mathcal T}_h} \int_K {\mathscr L}({{\bm z}_h}):\mbox{\boldmath$\varepsilon$}(\mbox{\boldmath$\eta$}){\mathrm d}{\bm x} \big|
&\leq \big(\sum_{K \in {\mathcal T}_h}\|{\mathscr L}({{\bm z}_h})\|^2_{0,K}\big)^{1/2}\big(\sum_{K \in {\mathcal T}_h}\|\mbox{\boldmath$\varepsilon$}(\mbox{\boldmath$\eta$})\|^2_{0,K}\big)^{1/2}\\
&\leq C \big(\sum_{e\in \mathcal{E}_h}{\sf h }^{-1}\|\llbracket{{\bm z}_h}\rrbracket\|^2_{0,e}\big)^{1/2}\big(\sum_{K \in {\mathcal T}_h}\|\mbox{\boldmath$\varepsilon$}(\mbox{\boldmath$\eta$})\|^2_{0,K}\big)^{1/2}\\
&\leq C \big(\sum_{e\in {\mathcal E}_h}{\sf h }^{-1}\|\llbracket{{\bm z}_h}\rrbracket\|^2_{0,e}\big)^{1/2}\|\mbox{\boldmath$\varepsilon$}(\mbox{\boldmath$e$}_c)\|_{0,\Omega} ;
\end{split}
\end{equation}
noting that $\mbox{\boldmath$\varepsilon$}(\mbox{\boldmath$\eta$})|_K=\mbox{\boldmath$\varepsilon$}(\mbox{\boldmath$e$}_c)|_K$ for all $K\in {\mathcal T}_h$.\\
 {\em And the final part of~(\ref{2term})}:
 \begin{equation}\label{term2d}
  \begin{split}
 |\sum_{e\in {\mathcal E}_h}\int_e\frac{\alpha}{{\sf h }}\llbracket\mbox{\boldmath$\eta$}\rrbracket\cdot \llbracket{{\bm z}_h}\rrbracket{\rm d}A|& \leq C (\sum_{e\in {\mathcal E}_h}{\sf h }^{-1}\|\llbracket\mbox{\boldmath$\eta$}\rrbracket\|^2_{0,e})^{1/2} \times (\sum_{e\in {\mathcal E}_h}{\sf h }^{-1}\|\llbracket{{\bm z}_h}\rrbracket\|^2_{0,e})^{1/2}\\
 &\leq C\|\nabla \mbox{\boldmath$e$}_c\|_{0,\Omega}\times (\sum_{e\in {\mathcal E}_h}{\sf h }^{-1}\|\llbracket{{\bm z}_h}\rrbracket\| ^2_{0,e})^{1/2},
  \end{split}
 \end{equation}
 noting that here we use the same approach as in~(\ref{term2b}).
 \item 
It remains now to estimate   the   term $-A(\mbox{\boldmath$e$}_d,\mbox{\boldmath$e$}_c)$ from~(\ref{ec}). We have that
 \begin{equation}
 \begin{split}
 -A(\mbox{\boldmath$e$}_d,\mbox{\boldmath$e$}_c)&\leq C^* \big(\sum_{K \in {\mathcal T}_h}\|\mbox{\boldmath$\varepsilon$}(\mbox{\boldmath$e$}_d)\|_{0,K}\big)\|\mbox{\boldmath$\varepsilon$}(\mbox{\boldmath$e$}_c)\|_{0,\Omega}+\|{\mathscr L}({\mbox{\boldmath$e$}_d})\|_{0,\Omega}\|\mbox{\boldmath$\varepsilon$}(\mbox{\boldmath$e$}_c)\|_{0}\\
& \leq C \|\mbox{\boldmath$\varepsilon$}(\mbox{\boldmath$e$}_c)\|_{0,\Omega}\left(\sum_{K \in {\mathcal T}_h} \|\mbox{\boldmath$\varepsilon$}(\mbox{\boldmath$e$}_d)\|^2_{0,K}+ \sum_{e\in \mathcal{E}_h}{\sf h }^{-1}\|\llbracket{\mbox{\boldmath$e$}_d}\rrbracket\|^2_{0,e}\right)^{1/2}.
 \end{split}
 \end{equation}
 Observing that $\|\llbracket{\mbox{\boldmath$e$}_d}\rrbracket\|_e=\|\llbracket{\bm z}_h\rrbracket\|_e, \forall {e\in \mathcal{E}_h} $. We also have $\|\mbox{\boldmath$\varepsilon$}(\mbox{\boldmath$e$}_d)\|_{0,K} \leq C\|\nabla \mbox{\boldmath$e$}_d\|_{0,K}$ for $K\in {\mathcal T}_h$, and remark that from Lemma~\ref{Ka}, the nonconforming  term can be bounded in terms of the jumps of discrete solution. We then obtain the following result
  \begin{equation}\label{term3}
  -A(\mbox{\boldmath$e$}_d,\mbox{\boldmath$e$}_c)\leq C \|\mbox{\boldmath$\varepsilon$}(\mbox{\boldmath$e$}_c)\|_{0,\Omega}(\sum_{e\in \mathcal{E}_h}{\sf h }^{-1}\|\llbracket{{\bm z}_h}\rrbracket\|^2_{0,e})^{1/2},
\end{equation}
\end{enumerate}

Then, summing up all the bounds~(\ref{term2a}),~(\ref{term2b}),~(\ref{term2c}),~(\ref{term2d}) and ~(\ref{term3}), using  the equivalent norms $\|\nabla \mbox{\boldmath$e$}_c\|_{0,\Omega}$ and $\| \mbox{\boldmath$\varepsilon$}{({\bm e}_c)}\|_{0,\Omega}$ in ${H}^1_0(\Omega)^d$, factorize the term $\nabla {\bm e}_c$ and finally deviding both sides of the resulting inequality by  $\nabla {\bm e}_c$, we arrive at the energy  bound for the conforming part of the error as follows:
  \begin{equation}\label{energyno}
  \begin{split}
\|\nabla \mbox{\boldmath$e$}_c\|_{0,\Omega}\leq C \bigg( \sum_{K \in {\mathcal{T}}_h}{h}_K^2 \|\mbox{\boldmath$r$}+\nabla\cdot \mbox{\boldmath$\sigma$}({{\bm z}_h})\|^2_{0,K}
&+ \sum_{e\in {\mathcal E}^I_{h}}{\sf h}\|\llbracket{({\bm \sigma}({{\bm z}_h})) {\bm \nu}_e}\rrbracket\|^2_{0,e}\\
&+\sum_{e\in {\mathcal{E}}_h}\alpha {\sf h}^{-1}\|\llbracket{{\bm z}_h}\rrbracket\|^2_{0,e} \bigg)^{1/2}.
\end{split}
\end{equation}

Now to derive the error in $L^2(\Omega)$-norm, we will use triangular inequality and Poincar\'e's inequality. By triangular inequality we have that 
\begin{equation}
\|\mbox{\boldmath$z$}- {{\bm z}_h}\|_{0,\Omega}\leq \|{\bm e}_c\|_{0,\Omega}+\|{\bm e}_d\|_{0,\Omega}.
\end{equation}
Here, by using Poincar\'e's inequality for the function ${\bm e}_c \in H^1_0(\Omega)^d$, we have that
$$\|{\bm e}_c\|_{0,\Omega}\leq C_{\rm F\Omega}\|\nabla{\bm e}_c\|_{0,\Omega}.$$
Then making use of the bound for the energy norm ~(\ref{energyno}), we derive the bound for $L^2$-norm of ${\bm e}_c$, and combining with Lemma~\ref{Ka} to bound the term $\|{\bm e}_d\|_{0,\Omega}$, we complete the proof.
 \end{proof}
  \begin{remark}
 {\em (Residual-based {\em a posteriori} error indicators)}.
 Notice that the sum of the above quantities can be decomposed into element-wise contributions. For example in case of the global estimator in Theorem \ref{IP} we can use the local error indicator: 
 \begin{equation*}
 \begin{split}
 &\eta_K=\\
 &\sqrt{{h}_K^2\|{\bm r}+\nabla \cdot ({\bm \sigma}({\bm z}_h))\|^2_{0,K}+ {\sf h}\|\llbracket{({\bm \sigma}({{\bm z}_h})) {\bm \nu}_e}\rrbracket\|^2_{0,\partial K \setminus \partial  \Omega}
+({\sf h}+\alpha {\sf h}^{-1})\|\llbracket{{\bm z}_h}\rrbracket\|^2_{0,\partial K} },
\end{split}
\end{equation*}
and note that these contributions provide local measures of the magnitude of the residual and are able to provide refinement indicators for adaptive strategies.
And in practice, because the mesh is taken with small $h_K$, then the term $\sf h$ is very small in comparison with $\alpha {\sf h}^{-1}$, so this term $({\sf h}+\alpha {\sf h}^{-1})\|\llbracket{{\bm z}_h}\rrbracket\|^2_{0,\partial K}$ can be considered as $\alpha {\sf h }^{-1}\|\llbracket{{\bm z}_h}\rrbracket\|^2_{0,\partial K}$ .
 \end{remark} 
 \begin{remark}
 We present here two alternative ways to derive {\em a posteriori} error bunds for stationary elasticity problem. In te first approach, we make use of the duality technique, which is well-known for deriving the $L^2-$norm of the errrors in finite elements methods. But notice that by this method, we need the regularity assumption of the adjoint equation. In the second approach, we obtain automatically a bound in $L^2-$ norm from the bound of energy norm, but we also recognize that this bound loses an order of the mesh size in comparison with the bound obtained by the duality method. 
 \end{remark}
\subsection{Semi-discrete {\em a posteriori} error estimates}\label{expliciterrorsemie}

 Now in view of Lemma~\ref{completing} and the error bound in stationary case in Theorem~\ref{IP2} or \ref{IP}, we obtain the following {\em a posteriori} error bound for semi-discrete problem.  
\begin{theorem} With the notations defined as in Definition~\ref{reconstw}, assume  that $\bm f$ is differentiable with respect to time. Then the following error bound holds
\begin{equation}
 \begin{split}
\|\bm e\|_{L^\infty (0,T;L^2 (\Omega)^d} &\leq  \| {\mathscr E}_{\rm IP}({\bm  u}_h,{ \bm g}, {\mathcal T}_h)\|_{L^\infty(0,T)}\\
&+2 \int_0^T  {\mathscr E}_{\rm IP}(\partial_t{\bm  u}_h,\partial_t{ \bm g}, {\mathcal T}_h){\rm d}t+\sqrt {2 }  {\mathscr E}_{\rm IP}({\bm  u}_h(0),{ \bm g}(0), {\mathcal T}_h)\\
&+\sqrt 2 \|{\bm u}_0 - {\bm  u}_h(0)\|_{0,\Omega} +2C_{\rm F\Omega}c_*^{-1/2}\|{\bm u}_1-\partial_t{\bm  u}_h(0)\|_{0,\Omega}.
 \end{split}
\end{equation}
with ${\bm g}={{\bm B} {\bm u}_h}-{\sf \Pi}_h{\bm f}+{\bm f}$, and ${\mathscr E}_{\rm IP}$ is given by~~(\ref{IP2}) or (\ref{IP}).
\end{theorem}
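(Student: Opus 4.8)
The plan is to obtain the stated bound by a direct substitution: the abstract semi-discrete estimate of Theorem~\ref{absm} reduces everything to the three quantities $\|\bm \theta\|_{L^\infty(0,T;L^2(\Omega)^d)}$, $\|\bm \theta(0)\|_{0,\Omega}$ and $\int_0^T\|\partial_t\bm\theta\|_{0,\Omega}\,{\rm d}t$ involving the conforming error $\bm\theta={\bm w}-{\bm u}_h$, plus two data-approximation terms; and Lemma~\ref{completing}, fed by either stationary a posteriori bound of Theorem~\ref{IP2} or Theorem~\ref{IP}, converts those three quantities into computable expressions in $\mathscr E_{\rm IP}$. So the proof is short, but I would carry out the steps in the following order.

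First, I would recall from Remark~\ref{rolew22} that for each fixed $t$ the SE reconstruction $\bm w(t)$ is exactly the weak solution of the stationary elasticity problem~(\ref{eqg}) with right-hand side ${\bm g}(t)={\bm B}{\bm u}_h(t)-{\sf\Pi}_h{\bm f}(t)+{\bm f}(t)$, and that ${\bm u}_h(t)$ is precisely its DG approximation in the sense of~(\ref{DGsta})--(\ref{DGAbi}). Hence Theorem~\ref{IP2} (or Theorem~\ref{IP}) applies verbatim with ${\bm z}\leftarrow{\bm w}(t)$, ${\bm z}_h\leftarrow{\bm u}_h(t)$, ${\bm r}\leftarrow{\bm g}(t)$, giving $\|\bm\theta(t)\|_{0,\Omega}=\|{\bm w}(t)-{\bm u}_h(t)\|_{0,\Omega}\le\mathscr E_{\rm IP}({\bm u}_h(t),{\bm g}(t),{\mathcal T}_h)$ for all $t\in[0,T]$. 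Taking the essential supremum in $t$ bounds $\|\bm\theta\|_{L^\infty(0,T;L^2(\Omega)^d)}$ by $\|\mathscr E_{\rm IP}({\bm u}_h,{\bm g},{\mathcal T}_h)\|_{L^\infty(0,T)}$, and evaluating at $t=0$ bounds $\|\bm\theta(0)\|_{0,\Omega}$; these are the first two bounds of~(\ref{thetaes}).

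Next, for $\int_0^T\|\partial_t\bm\theta\|_{0,\Omega}\,{\rm d}t$ I would reuse the argument from the proof of Lemma~\ref{completing}: since $a(\cdot,\cdot)$ and $a_h(\cdot,\cdot)$ are independent of $t$ and ${\sf\Pi}_h$ commutes with $\partial_t$, differentiating~(\ref{recon-semi}) in time shows that $\partial_t{\bm w}(t)$ solves the stationary problem with data $\partial_t{\bm g}(t)$ and that $\partial_t{\bm u}_h(t)$ is its DG approximation (this is where the hypothesis that $\bm f$ is differentiable in time enters). Applying the stationary estimator once more gives $\|\partial_t\bm\theta(t)\|_{0,\Omega}\le\mathscr E_{\rm IP}(\partial_t{\bm u}_h(t),\partial_t{\bm g}(t),{\mathcal T}_h)$, and integrating over $(0,T)$ yields the third bound of~(\ref{thetaes}).

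Finally, I would insert the three bounds of~(\ref{thetaes}) into the abstract estimate~(\ref{estimatee}) of Theorem~\ref{absm}, keeping the remaining two terms $\sqrt2\,\|{\bm u}_0-{\bm u}_h(0)\|_{0,\Omega}$ and $2C_{\rm F\Omega}c_*^{-1/2}\|{\bm u}_1-\partial_t{\bm u}_h(0)\|_{0,\Omega}$ unchanged, as they are already fully computable initial-data approximation errors; collecting the terms reproduces the claimed inequality. I do not anticipate a genuine obstacle; the only points needing a little care are (i) checking that $t\mapsto\mathscr E_{\rm IP}(\partial_t{\bm u}_h(t),\partial_t{\bm g}(t),{\mathcal T}_h)$ is integrable on $(0,T)$, which follows from the regularity in time of ${\bm u}_h$ and $\bm f$ together with the continuous dependence of $\mathscr E_{\rm IP}$ on its data, and (ii) when the duality-based estimator of Theorem~\ref{IP2} is chosen, ensuring that the $H^2$-regularity hypothesis~(\ref{regulardual}) for the adjoint of the reconstruction problem holds uniformly in $t$, e.g. under the usual assumption that $\Omega$ is convex.
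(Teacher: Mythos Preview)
Your proposal is correct and follows exactly the approach of the paper: the theorem is obtained by inserting the three bounds of Lemma~\ref{completing} (fed by the stationary estimator of Theorem~\ref{IP2} or Theorem~\ref{IP}) into the abstract estimate~(\ref{estimatee}) of Theorem~\ref{absm}. The paper states this in one sentence without further detail, so your write-up is in fact more explicit than the original.
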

\section{Fully discrete error estimates}
\subsection{Fullly discrete formulation}
\subsubsection{Space-time meshes and spaces}
We now discretize the time derivative by using the backward Euler discretization with a subdivison of the time interval $(0,T)$ into subintervals $(t^{n-1}, t^n), n=1,\ldots, N,$ with $t^0=0,$ and $t^N = T$, and with the uniform time-step $\tau=t^n-t^{n-1}$. Associated with the time subdivision,  let ${\mathcal T}_h^n$, $n = 0, \ldots, N$
be a sequence of meshes  which may be different from ${\mathcal T}_h^{n-1}$  when $n\geq 1$. We also assume that the sequences of meshes  ${\mathcal T}_h^{n}$, $n=0,\ldots,N$ are regular and shape regular, uniformly  in $n$.

 For each $n=0,\ldots,N$, we denote by ${\bf V}_h^n$  a DG finite element space of fixed degree $r$ built on the partition ${\mathcal T}_h^n$ 
$${\bf V}_h^n:= \{\mbox{\boldmath$v$}\in L^2 (\Omega)^d : \mbox{\boldmath$v$}|_K  \in {\mathbb P}_r ({K})^d,~ K \in {\mathcal T}_h^n\},$$
 and denote by ${\mathcal E}_h^n$ the union of all edges (or faces) of the triangulation ${\mathcal T}_h^n$.

\subsection{Fully discretization formulation}\label{schemef}
 The fully discretization reads as follows:
 For each $n = 1, \ldots ,N$, find ${\bm u}_{h}^ n \in {\bf V}^n_h$
such that
\begin{equation}\label{fu41}
( {\partial  }^2 {\bm u}_{h}^ n , {{\bm v}})_\Omega + a_h^n({\bm u}_{h}^ n , {{\bm v}} ) = ( \mbox{\boldmath$f$}^n ,{\bm  v})_\Omega
~~~\mbox{for all}~{{\bm v}} \in {{\bf V}}_h^n,
\end{equation}
where the bilinear form $a_h^n$ is given by
\begin{equation}\label{ahnbw}
\begin{split}
a_h^n(\mbox{\boldmath$u$},\mbox{\boldmath$v$} )&=\sum_{K\in {\mathcal{T}}_h^n} \int_{K}\mbox{\boldmath$\sigma$}({\bm u}):\mbox{\boldmath$\varepsilon$}({\bm v}){\rm d}{\bm x}-\sum_{e\in \mathcal{E}^n_h} \int_{e}\llrrbrace{(\mbox{\boldmath$\sigma$}({\bm u})) \mbox{\boldmath$\nu$}_e}\cdot\llbracket{\mbox{\boldmath$v$}}\rrbracket{\rm d}A\\
 &-\sum_{e\in \mathcal{E}^n_h} \int_{e}\llrrbrace{(\mbox{\boldmath$\sigma$}({\bm v})){\bm \nu}_e}\cdot\llbracket{\mbox{\boldmath$u$}}\rrbracket{\rm d}A+\sum_{e\in \mathcal{E}^n_h} \int_{e}{\texttt a}\llbracket{\mbox{\boldmath$u$}}\rrbracket\cdot\llbracket{\mbox{\boldmath$v$}}\rrbracket{\rm d}A;
 \end{split}
 \end{equation}
and $\mbox{\boldmath$f$}^ n := \mbox{\boldmath$f$} (t^n , \cdot)$,  the backward  finite difference scheme for the discrete temporal derivatives
\begin{equation}\label{backwardeuler2}
{ \partial}^ 2 {\bm u}_{h }^ n :=\frac{{\partial  } {\bm u}_{h}^ n - {\partial  } {\bm u}_h^{ n-1}}{\tau},
\end{equation}
with
\begin{equation}\label{backeu1}
{ \partial} {\bm u}_{h}^n:=
\frac{{\bm u}_{h}^n-{\bm u}_h^{n-1}}{\tau} ~~\mbox{for}~n=1,2,\ldots, N,
\end{equation}
and  the initial values ($n=0$) are given by: $${\bm u}_{h}^0 := {\sf \Pi}_h^0 \mbox{\boldmath$u$}_0, \quad \partial {\bm u}_h^0={\sf \Pi}_h^0\mbox{\boldmath$u$}_1,$$
here ${\sf \Pi}_h^0$ is the orthogonal $L^2$-projection onto the finite element space ${\bf V}_h^0$  (although other projections onto ${\bf V}_h^0$ can also be used for setting the values of  ${\bm u}_{h}^0$ and $\partial {\bm u}_h^0$).

It's known that the backward Euler scheme is unconditional stable, from the coercivity of the bilinear form  $a_h^n(\cdot,\cdot)$, we have that the problem~(\ref{fu41})
 admits a unique solution $({\bm u}_h^n)_{1\leq n\leq N}$ at each time step.

\subsection{Space-time reconstruction}\label{sectionspacetime}
With the notations of finite element meshes, spaces introduced in Section~\ref{schemef} for fully discrete formulation, we have the following definitions:
\begin{definition}\label{fulrec}({\em Space-time reconstruction}).
 \begin{enumerate}[i)]
 \item Let ${\bm u}_h^n, n=0,\ldots, N$, be the fully discrete solution computed by the method~(\ref{fu41}), ${\sf \Pi}^n_h:L^2(\Omega)^d\rightarrow {\bf V}^n_h$ be the orthogonal $L^2$-projection, and ${\bm B}^n: {\bf V}^n_h \rightarrow {\bf V}^n_h$ to be the discrete operator defined by
 \begin{equation}\label{defBn}
 \mbox{for}~~ {\bm z} \in {\bf V}^n_h, ( {\bm B}^n {\bm z}, {\bm v})_\Omega=a_h^n({\bm z},{\bm v}),~\forall {\bm v}\in {\bf V}^n_h.
 \end{equation}
 We define the {SE} reconstruction ${\bm w}^n\in H^1_{0}(\Omega)^d$ of ${\bm u}_h^n$ to be the solution of the {SE} problem
\begin{equation}\label{constwnpa}
a({\bm w}^n,\mbox{\boldmath$v$})=( \mbox{\boldmath$g$}^n,\mbox{\boldmath$v$})_\Omega ,~\forall \mbox{\boldmath$v$}\in H^1_{0}(\Omega)^d,
\end{equation}
with $\mbox{\boldmath$g$}^n={\bm B}^n {\bm u}_h^n-{\sf \Pi}^n_h\mbox{\boldmath$f$}^n+\widetilde{\mbox{\boldmath$f$}}^n$;
where ${\sf \Pi}^n_h$ be the orthogonal $L^2$- projection onto the space ${\bf V}_h^n$,  ${\widetilde{\mbox{\boldmath$f$}}}^0=\mbox{\boldmath$f$}(0,\cdot)$, ${\widetilde{\mbox{\boldmath$f$}}}^n=\tau^{-1}\int_{t^{n-1}}^{t^n}\mbox{\boldmath$f$}(t,\cdot){\rm d}t$,  for $n = 1, \ldots, N$. 

\item Define the reconstruction ${\partial  } {\bm w}^0 \in {  H}^1_0 (\Omega)^d$ to be the solution to the {SE} problem
\begin{equation}\label{partialw}
a({ \partial} {\bm w}^0, \mbox{\boldmath$v$}) = ( {\partial  } \mbox{\boldmath$g$}^0, \mbox{\boldmath$v$})_\Omega,~~~~
\forall \mbox{\boldmath$v$ }\in { H}^1_0(\Omega)^d,
\end{equation}
with
$${ \partial} \mbox{\boldmath$g$}^0 := {\bm B}^0 (\partial{\bm u}_h^0) - {\sf \Pi}^0_h \mbox{\boldmath$f$}^0 +\mbox{\boldmath$f$}^0,$$
recall that ${\sf \Pi}^0_h$ denotes the orthogonal $L^2$ projection onto the space ${\bf V}_h^0$ and $\partial{\bm u}_h^0={\sf \Pi}_h^0 {\bm u}_1$ as introduced in Section~\ref{schemef}.

\item The continuous time-extension ${\bm u}_{N} : [0, t^N] \times \Omega \rightarrow {\mathbb R}^d$ of $\{{\bm u}_h^ n \}_{n=0}^N$
is defined by
\begin{equation}\label{uN}
{\bm u}_{N}(t) :=
\frac{t - t^{n-1}}{\tau} {\bm u}_h^n+\frac{t^n - t}{\tau} {\bm u}_h^{n-1}-\frac{( t - t ^{n-1}) (t^n - t)^2}{\tau}{{\partial  } ^2{\bm u}_h^n},
\end{equation}
for $t \in (t^{n-1}, t^n ], n = 1, \ldots, N$, with ${\partial  }^2 {\bm u}_h^n$ given by backward Euler scheme~(\ref{backwardeuler2}). We
note that ${\bm u}_{N}$ is a $C^1$ function in the time variable, with ${\bm u}_{N}(t^n ) = {\bm u}_h^n$ and $\partial_t{\bm u}_{N} (t^n ) = {\partial  } {\bm u}_h^ n$ for $n = 0, 1, \ldots, N$.
\item
The time-continuous extension  $ \mbox{\boldmath$w$}_N:[0, t^N] \times \Omega \rightarrow {\mathbb R}^d$ of $\{{\bm w}^n\}_{n=0}^N$ is given by
\begin{equation}\label{wN}
 {\bm w}_N(t) :=\frac{t - t^{n-1}}{\tau} { {\bm w}}^n+\frac{t^n - t}{\tau} { {\bm w}}^{n-1}-\frac{( t - t ^{n-1}) (t^n - t)^2}{\tau}{ {\partial  }} ^2{{\bm w}^n},
 \end{equation}
for $t \in (t^{n-1}, t^n ], n = 1, \ldots, N$; with ${\partial  }^2 {\bm w}^n$ given by the backward Euler scheme as in~(\ref{backwardeuler2}), noting that ${ {\partial  } } {\bm w}^0$ is defined as in~(\ref{partialw}). By construction, this is also a $C^ 1$ function in the time variable.
\end{enumerate}
 \end{definition}
\begin{definition}\label{decomposefully}({\em Splitting the error}). Decomposing the error as follows:\\
\begin{equation}
\begin{split}
\mbox{\boldmath$e$}_N &:= {\bm u}_{N}- \mbox{\boldmath$u$} \\
&:= \mbox{\boldmath$\rho$}_N-\mbox{\boldmath$\theta$}_N,
\end{split}
\end{equation}
where $\mbox{\boldmath$\theta$}_N:= {\bm w}_N-{\bm u}_{N} $  and $\mbox{\boldmath$\rho$}_N := {\bm w}_N- \mbox{\boldmath$u$} \in H^1_0(\Omega)^d$.
\end{definition}
\subsection{Fully discrete error relation}\label{fullyrelationerror}
\begin{theorem}\label{fullyrelation}
{\em (Fully discrete error relation)}. Under the notation as in Definition~\ref{decomposefully}, for $t  \in (t^{n-1}, t^n], n = 1,\ldots, N$ we have
\begin{equation}
\begin{split}
( \partial^2_{tt}\mbox{\boldmath$e$}_N, \mbox{\boldmath$v$})_\Omega + a(\mbox{\boldmath$\rho$}_N, \mbox{\boldmath$v$}) = &( ({\sf I} -{\sf \Pi}_h ^n )\partial^2_{tt}{\bm u}_{N}, \mbox{\boldmath$v$}) _\Omega+  \mu^n (t) ( {\partial  }^2{\bm  u}_h^ n,{\sf \Pi}_h^n \mbox{\boldmath$v$})_\Omega \\
&+ a({\bm w}_N-{\bm w}^n, \mbox{\boldmath$v$}) 
+( \widetilde{\mbox{\boldmath$f$}}^n-\mbox{\boldmath$f$},\mbox{\boldmath$v$})_\Omega, ~{\mbox {for all}}~~ \mbox{\boldmath$v$}\in { H}^1_0(\Omega)^d;
\end{split}
\end{equation}
 where $\sf I$ is the identity mapping in ${ L}^2 (\Omega)^d$, and
 $\mu^n(t):=-6\tau (t - \frac{t^{n}+t^{n-1}}{2}).$
\end{theorem}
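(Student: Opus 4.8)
The plan is to run the same argument as in the semi-discrete error relation (Lemma~\ref{errorrelation}), now carrying along the extra terms generated by the backward-Euler time stepping and the change of mesh between time levels. Fix $t\in(t^{n-1},t^n]$ and $\bm v\in H^1_0(\Omega)^d$. Using the error splitting of Definition~\ref{decomposefully}, $\bm e_N=\bm u_N-\bm u$ and $\bm\rho_N=\bm w_N-\bm u$, so that
\[
(\partial^2_{tt}\bm e_N,\bm v)_\Omega+a(\bm\rho_N,\bm v)
=(\partial^2_{tt}\bm u_N,\bm v)_\Omega+a(\bm w_N,\bm v)-\big[(\partial^2_{tt}\bm u,\bm v)_\Omega+a(\bm u,\bm v)\big],
\]
and the weak formulation~(\ref{weak}) replaces the bracket by $(\bm f,\bm v)_\Omega$. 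Localizing to the current interval, I write $a(\bm w_N,\bm v)=a(\bm w_N-\bm w^n,\bm v)+a(\bm w^n,\bm v)$ and use~(\ref{constwnpa}) to get $a(\bm w^n,\bm v)=(\bm g^n,\bm v)_\Omega$ with $\bm g^n=\bm B^n\bm u_h^n-{\sf \Pi}_h^n\bm f^n+\widetilde{\bm f}^n$.

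The key step is to simplify $(\bm B^n\bm u_h^n,\bm v)_\Omega-({\sf \Pi}_h^n\bm f^n,\bm v)_\Omega$. Since $\bm B^n\bm u_h^n\in{\bf V}_h^n$ and ${\sf \Pi}_h^n$ is the self-adjoint, idempotent $L^2$-projection onto ${\bf V}_h^n$, one has $(\bm B^n\bm u_h^n,\bm v)_\Omega=(\bm B^n\bm u_h^n,{\sf \Pi}_h^n\bm v)_\Omega=a_h^n(\bm u_h^n,{\sf \Pi}_h^n\bm v)$ by~(\ref{defBn}), while $({\sf \Pi}_h^n\bm f^n,\bm v)_\Omega=(\bm f^n,{\sf \Pi}_h^n\bm v)_\Omega$; testing the fully discrete equation~(\ref{fu41}) against ${\sf \Pi}_h^n\bm v\in{\bf V}_h^n$ then yields
\[
(\bm B^n\bm u_h^n,\bm v)_\Omega-({\sf \Pi}_h^n\bm f^n,\bm v)_\Omega
=a_h^n(\bm u_h^n,{\sf \Pi}_h^n\bm v)-(\bm f^n,{\sf \Pi}_h^n\bm v)_\Omega
=-(\partial^2\bm u_h^n,{\sf \Pi}_h^n\bm v)_\Omega .
\]
Substituting this, splitting $(\partial^2_{tt}\bm u_N,\bm v)_\Omega=(({\sf I}-{\sf \Pi}_h^n)\partial^2_{tt}\bm u_N,\bm v)_\Omega+(\partial^2_{tt}\bm u_N,{\sf \Pi}_h^n\bm v)_\Omega$, and keeping the surviving forcing term $(\widetilde{\bm f}^n-\bm f,\bm v)_\Omega$, the left-hand side becomes
\[
(({\sf I}-{\sf \Pi}_h^n)\partial^2_{tt}\bm u_N,\bm v)_\Omega+\big(\partial^2_{tt}\bm u_N-\partial^2\bm u_h^n,\,{\sf \Pi}_h^n\bm v\big)_\Omega+a(\bm w_N-\bm w^n,\bm v)+(\widetilde{\bm f}^n-\bm f,\bm v)_\Omega .
\]

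It remains to compute $\partial^2_{tt}\bm u_N$ on $(t^{n-1},t^n]$ from~(\ref{uN}). The two terms affine in $t$ contribute nothing, and for the cubic correction $-\tau^{-1}(t-t^{n-1})(t^n-t)^2\,\partial^2\bm u_h^n$ a direct differentiation (set $s=t-t^{n-1}$, $t^n-t=\tau-s$) gives $\partial^2_{tt}\big[(t-t^{n-1})(t^n-t)^2\big]=6(t-t^{n-1})-4\tau$, hence $\partial^2_{tt}\bm u_N=(1+\mu^n(t))\,\partial^2\bm u_h^n$ with $\mu^n$ the function of the statement; note $\int_{t^{n-1}}^{t^n}\mu^n(t)\,{\rm d}t=0$, the vanishing-moment property built into the reconstruction. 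Inserting $\partial^2_{tt}\bm u_N-\partial^2\bm u_h^n=\mu^n(t)\,\partial^2\bm u_h^n$ produces exactly the asserted identity.

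The only genuinely delicate point is the middle step: one must track carefully which projection (${\sf \Pi}_h^n$, using only that it is self-adjoint and idempotent on ${\bf V}_h^n$), which bilinear form ($a$ on $H^1_0(\Omega)^d$ versus $a_h^n$ on ${\bf V}_h^n$), and which datum ($\bm f(t,\cdot)$, $\bm f^n$, or $\widetilde{\bm f}^n$) occurs in each line, so that the four residuals land in the correct slots. The differentiation of $\bm u_N$ is elementary but must be done attentively, as it is what pins down the precise form of $\mu^n(t)$.
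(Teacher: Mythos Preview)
Your proof is correct and follows essentially the same route as the paper's: both start from the decomposition $(\partial^2_{tt}\bm e_N,\bm v)_\Omega+a(\bm\rho_N,\bm v)=(\partial^2_{tt}\bm u_N,\bm v)_\Omega+a(\bm w_N,\bm v)-(\bm f,\bm v)_\Omega$, split off $(({\sf I}-{\sf\Pi}_h^n)\partial^2_{tt}\bm u_N,\bm v)_\Omega$ via the $L^2$-projection, use the fully discrete scheme~(\ref{fu41}) tested against ${\sf\Pi}_h^n\bm v$ together with the definition of $\bm w^n$ to produce the $a(\bm w_N-\bm w^n,\bm v)$ and $(\widetilde{\bm f}^n-\bm f,\bm v)_\Omega$ terms, and invoke the identity $\partial^2_{tt}\bm u_N=(1+\mu^n(t))\,\partial^2\bm u_h^n$ coming from differentiating~(\ref{uN}). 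The only cosmetic difference is ordering: the paper applies the projection split first and then unfolds the discrete equation, whereas you first expand $a(\bm w^n,\bm v)=(\bm g^n,\bm v)_\Omega$ and then bring in~(\ref{fu41}); the algebra is identical.
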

\begin{proof}

 Noting that $\partial^2_{tt}{\bm u}_{N} (t) = (1 + \mu^n (t)){\partial}^ 2 {\bm u}_h^ n$ for $t \in (t^{n-1}, t^n ], n = 1, \ldots, N$; then for all ${\bm v} \in H^1_0(\Omega)^d$ we have: 
\begin{equation*}
 \begin{split}
  (\partial^2_{tt}\mbox{\boldmath$e$}_N, {\bm v})_\Omega + a({\bm \rho}_N, {\bm v})& = ( \partial^2_{tt}{\bm u}_{N}, {\bm v})_\Omega + a({\bm w}_N,{\bm v}) -( {\bm f},{\bm v})_\Omega \\
&=(  ({\sf I} - {\sf \Pi}^ n_h )\partial^2_{tt}{\bm u}_{N}, {\bm v})_\Omega + (\partial^2_{tt}{\bm u}_{N}, {\sf \Pi}_h^ n {\bm v})_\Omega\\
& + a({\bm w}_N,{\bm v}) -( {\bm f},{\bm v})_\Omega \\
&=(  ({\sf I} - {\sf \Pi}^ n_h )\partial^2_{tt}{\bm u}_{N}, {\bm v})_\Omega + \mu^n(t) ( {\partial  }^ 2 {\bm u}_h^ n, {\sf \Pi}^ n_h {\bm v})_\Omega \\
&\quad - a_h^n({\bm u}_h^ n, {\sf \Pi}^ n_h  {\bm v}) + a( {\bm w}_N, {\bm v}) + ( {\sf \Pi}^n_h  {\bm f}^ n - {\bm f},{\bm v})_\Omega \\
& =(  ({\sf I} - {\sf \Pi}^ n_h )\partial^2_{tt}{\bm u}_{N},{\bm v})_\Omega + \mu^n(t) ( {\partial  }^ 2 {\bm u}_h^ n, {\sf \Pi}^ n_h {\bm v})_\Omega  \\
&+ a({\bm w}_N-{\bm w}^n, {\bm v}) +( \widetilde{{\bm f}}^n-{\bm f},{\bm v})_\Omega;
 \end{split}
\end{equation*}
noting that the first equation follows from the formulation~(\ref{weak}) of the weak solution $\bm u$. In the second and  third equalities, the properties of orthogonal $L^2$-projection ${\sf \Pi}_h^n$ on space ${\bf V}_h^n$ are employed,  and we also use the formulation of fully discrete scheme in the third equality, and the last equality comes from  the identity
$a_h^n({\bm u}_h^n, {\sf \Pi}_h^n \mbox{\boldmath$v$}) -( {\sf \Pi}_h^n \mbox{\boldmath$f$}^n- \widetilde{\mbox{\boldmath$f$}}^n,\mbox{\boldmath$v$})_\Omega = ( {\bm B}^n {\bm u}_h^n,{\sf \Pi}_h^n \mbox{\boldmath$v$})_\Omega -( {\sf \Pi}_h^n \mbox{\boldmath$f$}^n- \widetilde{\mbox{\boldmath$f$}}^n,\mbox{\boldmath$v$})_\Omega= a({\bm w}^n, \mbox{\boldmath$v$}),~~\forall {\bm v}\in H^1_0(\Omega)^d$.
\end{proof}
\begin{remark}\label{vanish}
({\em Zero-mean value property}). The particular form of the remainder $\mu^n (t)$ satisfies the
zero-mean value property
$$\int^{t^n}_
{t^{n-1}}
\mu^n (t) dt = 0, \quad n=1,\ldots, N,
$$
which is an important point for estimating the {\em a posteriori} bound derived from  the term $\mu^n(t) ( {\partial  }^ 2 {\bm u}_h^ n, {\sf \Pi}^ n_h {\bm v})_\Omega$ in the error relation~(\ref{fullyrelation}) above (see later in Lemma~\ref{I3moment} for the proof of time-reconstruction error bound).
\end{remark}
\subsection{Abstract fully {\em a posteriori} error bound}\label{abstractfullyerrorapos}
To analyze the error, we introduce the following quantities  that will provide
error estimator of the fully discrete scheme in Theorem~\ref{ful}. These indicators are originally introduced by Lakkis et al. in ~\cite{G-L-M} concerning {\em a posteriori} error estimates for finite element method to the wave equation.
\begin{definition}\label{indicators}
 {\em (A posteriori error indicators)}.   We define
\begin{enumerate}[i)]
 \item  The mesh change indicator is given by  
 $$\zeta_{\rm {MC}} :=\sum_{n=1}^{N}\int_{t^{n-1}}^{t^n} \|({\sf I}- {{\sf \Pi}}_h^n)\partial_t{\bm u}_{N}\|_{0,\Omega} {\rm d}t+\sum_{n=1}^{N-1}({t^N}-t^n) \|({\sf \Pi}_h^{n+1}- {\sf \Pi}_h^n){{ \partial} \bm u}_h^n\|_{0,\Omega}.$$

\item The evolution error indicator reads
 $$\zeta_{\rm{evo}}  :=\int_{0}^{{t^N}}\|\mathcal{G}\|_{0,\Omega}{\rm d}t,$$
 where ${\mathcal G}: (0,t^N] \rightarrow {\mathbb R}^d$ with ${\mathcal G}|_{(t^{n-1},t^n]}:={\mathcal G}^n$, $n=1,\ldots,N$ and
 \begin{equation}\label{defG}
 {\mathcal G}^n(t):=\frac{(t^n-t)^2}{2}{\partial  } {\bm g}^n-\left (\frac{(t^n-t)^4}{4\tau}-\frac{(t^n-t)^3}{3}\right ){\partial  }^2\mbox{\boldmath$g$}^n-{\bm \gamma}_n,
 \end{equation}
 with
 $\mbox{\boldmath$g$}^n$ as in Definition~\ref{fulrec} and ${\bm \gamma}_n:={\bm \gamma}_{n-1}+(\tau^2/2){\partial  } \mbox{\boldmath$g$}^n+(\tau^3/12){\partial  }^2 \mbox{\boldmath$g$}^n$, $n=1,\ldots,N$
with ${\bm \gamma}_0=\bm 0$;
\item The data error indicators are given by
 $$\zeta_{\rm {osc}}  :=\frac{1}{2\pi}\sum_{n=1}^{N}\left (\int_{t^{n-1}}^{t^n} \tau^3 \|{\widetilde{\bm  f}}^n-{\bm f}\|^2_{0,\Omega}{\rm d}t \right)^{1/2},$$
 which can be viewed as an error estimator related to the time-oscillation of the source term.

 \item The time reconstruction error indicators
  $$\zeta_{{\rm{T.Rec}}} := \frac{1}{2\pi}\sum_{n=1}^{N-1}\left (\int_{t^{n-1}}^{t^n} \tau^3 \|\mu^n{\partial  }^2{\bm u}_h^n\|^2_{0,\Omega} {\rm d}t \right)^{1/2}.$$
\end{enumerate}
\end{definition}
\begin{theorem}\label{ful}
  {\rm (Abstract fully discrete error bound)}. Let $\bm u$ be the  weak solution to ~(\ref{weak}); ${\bm u}_N$ and ${\bm w}_N$ are reconstructed from the fully discrete solution $\{{\bm u}_h^n\}_{n=1}^N$ and its { SE} reconstruction $\{{\bm w}^n\}_{n=1}^N$, respectively, as in~(\ref{uN}) and~(\ref{wN}),  and recalling the notation of 
indicators in Definition~\ref{indicators} then the following {\em a posteriori} error estimate holds
\begin{equation}
\begin{split}
\|\mbox{\boldmath$e$}_N\|_{{ L}^\infty (0,t^N;{ L}^2 (\Omega)^d)}& \leq \|\mbox{\boldmath$\theta$}_N\|_{{ L}^\infty (0,t^N;{L}^2 (\Omega)^d)}+\sqrt 2\|\mbox{\boldmath$\theta$}_N(0)\| _{0,\Omega} +2 \int_0^{t^N} \|\partial_t \mbox{\boldmath$\theta$}_N\| _{0,\Omega}{\rm d}t\\
&+2\big ({\zeta}_{\rm{MC}}+ {\zeta}_{\rm{evo}}+ {\zeta}_{\rm{osc}}+ {\zeta}_{\rm{T.Rec}}\big)\\
&+ \sqrt 2 \|\mbox{\boldmath$u$}_{0} - {\bm u}_h^0\| _{0,\Omega}+2{C_{F\Omega} c_*^{-1/2}}\|\mbox{\boldmath$u$}_1-\partial{\bm u}_h^0\| _{0,\Omega},
\end{split}
\end{equation}
where $C_{\rm F\Omega}$ is the constant of the Poincar\'e inequality, ${\bm u}_h^0$ and $\partial{\bm u}_h^0$ are the orthogonal $L^2-$projections onto the space ${\bf V}_h^0$ defined in Section~\ref{schemef}.
\end{theorem}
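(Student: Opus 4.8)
I would follow the Baker-type testing argument already used for Theorem~\ref{absm}, replacing the semi-discrete error relation by the fully discrete one of Theorem~\ref{fullyrelation}; the genuinely new work is to bound the four extra right-hand side terms there by the indicators of Definition~\ref{indicators}. First I would fix $t^*\in[0,t^N]$ and set $\hat{\bm v}(t,\cdot):=\int_t^{t^*}{\bm \rho}_N(s,\cdot)\,{\rm d}s\in H^1_0(\Omega)^d$, so that $\hat{\bm v}(t^*)=0$ and $\partial_t\hat{\bm v}=-{\bm \rho}_N$. Plugging ${\bm v}=\hat{\bm v}$ into the identity of Theorem~\ref{fullyrelation}, integrating over $(0,t^*)$, integrating by parts in time in the $(\partial^2_{tt}{\bm e}_N,\hat{\bm v})_\Omega$ term, and using the symmetry of $\mathbb C$ together with ${\bm e}_N={\bm \rho}_N-{\bm \theta}_N$ and $\partial_t\hat{\bm v}=-{\bm \rho}_N$ exactly as in the proof of Theorem~\ref{absm}, I obtain
\begin{equation*}
\tfrac12\|{\bm \rho}_N(t^*)\|^2_{0,\Omega}-\tfrac12\|{\bm \rho}_N(0)\|^2_{0,\Omega}+\tfrac12 a(\hat{\bm v}(0),\hat{\bm v}(0))=\int_0^{t^*}(\partial_t{\bm \theta}_N,{\bm \rho}_N)_\Omega\,{\rm d}t+(\partial_t{\bm e}_N(0),\hat{\bm v}(0))_\Omega+\mathcal R,
\end{equation*}
where, writing $n=n(t)$ for the index with $t\in(t^{n-1},t^n]$,
\begin{equation*}
\mathcal R:=\int_0^{t^*}\!\Big[\big(({\sf I}-{\sf \Pi}_h^n)\partial^2_{tt}{\bm u}_N,\hat{\bm v}\big)_\Omega+\mu^n(t)\big(\partial^2{\bm u}_h^n,{\sf \Pi}_h^n\hat{\bm v}\big)_\Omega+a({\bm w}_N-{\bm w}^n,\hat{\bm v})+\big(\widetilde{\bm f}^n-{\bm f},\hat{\bm v}\big)_\Omega\Big]{\rm d}t.
\end{equation*}

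\textbf{Estimating $\mathcal R$.} The point is that $|\mathcal R|\le\big(\max_{[0,t^N]}\|{\bm \rho}_N\|_{0,\Omega}\big)\big(\zeta_{\rm MC}+\zeta_{\rm evo}+\zeta_{\rm osc}+\zeta_{\rm T.Rec}\big)$, up to absorbing the universal constants. For the mesh-change term I integrate by parts in time on each $(t^{n-1},t^n]$: the volume part becomes $\int(({\sf I}-{\sf \Pi}_h^n)\partial_t{\bm u}_N,{\bm \rho}_N)_\Omega$, which by self-adjointness of ${\sf I}-{\sf \Pi}_h^n$ and Cauchy--Schwarz is controlled by $\max\|{\bm \rho}_N\|$ times the first sum in $\zeta_{\rm MC}$, while the node contributions, using $\partial_t{\bm u}_N(t^n)=\partial{\bm u}_h^n$ and telescoping over $n$, produce the $\sum_n(t^N-t^n)\|({\sf \Pi}_h^{n+1}-{\sf \Pi}_h^n)\partial{\bm u}_h^n\|_{0,\Omega}$ sum. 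For the term carrying $\mu^n$ I use the zero-mean property $\int_{t^{n-1}}^{t^n}\mu^n\,{\rm d}t=0$ of Remark~\ref{vanish}: since $\partial^2{\bm u}_h^n$ is constant in $t$ on the subinterval, I may replace $\hat{\bm v}(t)$ by $\hat{\bm v}(t)$ minus its time-average over $(t^{n-1},t^n)$, and a sharp Wirtinger inequality in time (best constant $\tau/\pi$) combined with $\partial_t\hat{\bm v}=-{\bm \rho}_N$ yields the factor $\tfrac1{2\pi}\tau^{3/2}\|\mu^n\partial^2{\bm u}_h^n\|_{0,\Omega}$ on each interval, i.e.\ $\zeta_{\rm T.Rec}$. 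Finally, by the definitions of the SE reconstructions ${\bm w}^n,{\bm w}_N$ (Definition~\ref{fulrec}) and of $\widetilde{\bm f}^n$, the combination $a({\bm w}_N-{\bm w}^n,\hat{\bm v})+(\widetilde{\bm f}^n-{\bm f},\hat{\bm v})_\Omega$ rewrites in terms of $\partial{\bm g}^n$, $\partial^2{\bm g}^n$ and the telescoping corrections ${\bm \gamma}_n$ as $(\mathcal G^n,\partial_t\hat{\bm v})_\Omega=-(\mathcal G^n,{\bm \rho}_N)_\Omega$ after an integration by parts in time; the role of ${\bm \gamma}_n$ is precisely to make $\mathcal G$ continuous across the nodes so that no spurious boundary terms survive, and Cauchy--Schwarz then gives the bound by $\max\|{\bm \rho}_N\|\cdot\zeta_{\rm evo}$, while the pure data part $(\widetilde{\bm f}^n-{\bm f},\cdot)$ contributes $\zeta_{\rm osc}$ via the same Wirtinger argument used for $\zeta_{\rm T.Rec}$.

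\textbf{Closing.} As in Theorem~\ref{absm}, Poincar\'e's and Korn's inequalities give $\tfrac12 a(\hat{\bm v}(0),\hat{\bm v}(0))\ge\tfrac14 c_*C_{\rm F\Omega}^{-2}\|\hat{\bm v}(0)\|^2_{0,\Omega}$, which absorbs the term $(\partial_t{\bm e}_N(0),\hat{\bm v}(0))_\Omega$ at the cost of $c_*^{-1}C_{\rm F\Omega}^2\|\partial_t{\bm e}_N(0)\|^2_{0,\Omega}$. Choosing $t^*$ so that $\|{\bm \rho}_N(t^*)\|_{0,\Omega}=\max_{[0,t^N]}\|{\bm \rho}_N\|_{0,\Omega}$ and applying the elementary inequality $AB-\tfrac14A^2\le B^2$ with $A=\|{\bm \rho}_N(t^*)\|_{0,\Omega}$ and $B=\int_0^{t^N}\|\partial_t{\bm \theta}_N\|_{0,\Omega}\,{\rm d}t+\zeta_{\rm MC}+\zeta_{\rm evo}+\zeta_{\rm osc}+\zeta_{\rm T.Rec}$, then taking square roots, bounds $\|{\bm \rho}_N\|_{L^\infty(0,t^N;L^2(\Omega)^d)}$. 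The triangle inequality $\|{\bm e}_N\|\le\|{\bm \theta}_N\|+\|{\bm \rho}_N\|$, the bound $\|{\bm \rho}_N(0)\|_{0,\Omega}\le\|{\bm e}_N(0)\|_{0,\Omega}+\|{\bm \theta}_N(0)\|_{0,\Omega}$, and the identities ${\bm e}_N(0)={\bm u}_h^0-{\bm u}_0$, $\partial_t{\bm e}_N(0)=\partial{\bm u}_h^0-{\bm u}_1$ then give the asserted estimate.

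\textbf{Main obstacle.} The energy/testing manipulations are routine adaptations of Theorem~\ref{absm}; the delicate step is the treatment of $\mathcal R$: identifying $a({\bm w}_N-{\bm w}^n,\cdot)+(\widetilde{\bm f}^n-{\bm f},\cdot)$ with $(\mathcal G^n,\partial_t\hat{\bm v})_\Omega$ and verifying that the corrections ${\bm \gamma}_n$ make $\mathcal G$ node-continuous, together with extracting the correct $\tau^{3/2}$ powers and the $1/(2\pi)$ constants from the zero-mean property of $\mu^n$ through a sharp Wirtinger-type inequality in time; correctly tracking the node (boundary-in-time) contributions created by the $n$-dependence of the meshes and projections — which are exactly what produces the second sum in $\zeta_{\rm MC}$ — also requires care.
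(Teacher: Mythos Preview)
Your proposal is correct and follows essentially the same route as the paper: the paper also uses the Baker test function $\hat{\bm v}_N(t)=\int_t^{t^*}{\bm\rho}_N\,{\rm d}s$, integrates the relation of Theorem~\ref{fullyrelation} over $(0,t^*)$, splits the residual into four pieces $\mathcal I_1,\dots,\mathcal I_4$, bounds each by $\zeta_{\rm MC},\zeta_{\rm evo},\zeta_{\rm osc},\zeta_{\rm T.Rec}$ times $\max\|{\bm\rho}_N\|$ via exactly the mechanisms you describe (integration by parts in $t$ for the mesh-change term, the antiderivative $\mathcal G$ for the evolution term, and the zero-mean/Poincar\'e-in-time argument for the oscillation and time-reconstruction terms), and then closes as in Theorem~\ref{absm}. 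One small imprecision: in the paper $\mathcal G$ arises from $a({\bm w}_N-{\bm w}^n,\hat{\bm v})$ \emph{alone} (not from the combination with $(\widetilde{\bm f}^n-{\bm f},\hat{\bm v})_\Omega$), and the data term $(\widetilde{\bm f}^n-{\bm f},\hat{\bm v})_\Omega$ is handled separately by the Wirtinger argument to produce $\zeta_{\rm osc}$; also the sharp mean-zero Poincar\'e constant used is $\tau/(2\pi)$, which is what gives the $1/(2\pi)$ prefactor in $\zeta_{\rm osc}$ and $\zeta_{\rm T.Rec}$.
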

\begin{proof}
The derivation  of this error bound is  similar to  in semi-discrete case. Firstly, we will take a test function ${\bm v}= \hat{\bm v}_N$ with $ \hat{\bm v}_N$ is defined similarly as in~(\ref{testfunction})
\begin{equation}
\hat{\mbox{\boldmath$v$}}_N(t, \cdot) =
\int_t^{{t^*}} \mbox{\boldmath$\rho$}_N(s, \cdot) ds,~
t \in [0, t^N],
\end{equation}
 assuming that $t^{m-1}\leq {t^*} \leq  t^m$ for some integer $m$ with $1\leq m\leq N$, and ${\bm \rho}_N$ is defined as in ~(\ref{decomposefully}). Then $\hat{\mbox{\boldmath$v$}}_N \in { H}^1_{0}(\Omega)^d$ and ${\mbox{\boldmath$\rho$}}_N \in  { H}^1_{0}(\Omega)^d$, and  we observe that
$$\hat{\mbox{\boldmath$v$}}_N({t^*} , \cdot) = 0, ~~\mbox{ and}~~ \partial_t \hat{\mbox{\boldmath$v$}}_N(t, \cdot) = -\mbox{\boldmath$\rho$}_N(t, \cdot)~~~\mbox{ a.e. in}~[0,t^N].$$
 We integrate the resulting equation with respect to $t$ between $0$ and ${t^*}$, to arrive at
\begin{equation}\label{1stequ}
\int_0^{t^*} ( \partial^2_{tt}{\bm e}_N, {\hat {\bm v}}_N)_\Omega {\rm d}t+\int_0^{t^*} a({\bm \rho}_N, {\hat {\bm v}}_N){\rm d}t=\sum_{i=1}^4{\mathcal I}_{i}({t^*}),
\end{equation}
where 
\begin{equation}\label{notationI1234}
\begin{split}
{\mathcal I}_{1}({t^*})&:=\sum_{n=1}^{m-1}\int_{t^{n-1}}^{t^n} (({\sf I}-{\sf \Pi}_h^n)\partial^2_{tt}{\bm u}_{N}, \hat{\bm v}_N)_{\Omega}{\rm d}t+\int_{t^{m-1}}^{{t^*}} (({\sf I}-{\sf \Pi}_h^m)\partial^2_{tt}{\bm u}_{N}, \hat{\bm v}_N)_{\Omega}{\rm d}t\\
{\mathcal I}_{2 }({t^*})&:==\sum_{n=1}^{m-1}\int_{t^{n-1}}^{t^n} a({\bm w}_N-{\bm w}^n,\hat{\bm v}_N){\rm d}t+\int_{t^{m-1}}^{{t^*}}a({\bm w}_N-{\bm w}^m,\hat{\bm v}_N){\rm d}t\\
{\mathcal I}_{3}({t^*})&:=\sum_{n=1}^{m-1}\int_{t^{n-1}}^{t^n} (\widetilde{\bm f}^n-{\bm f}, \hat{\bm v}_N)_{\Omega}{\rm d}t+\int_{t^{m-1}}^{{t^*}} (\widetilde{\bm f}^m-{\bm f}, \hat{\bm v}_N)_{\Omega}{\rm d}t,\\
{\mathcal I}_{4}({t^*})&:=\sum_{n=1}^{m-1}\int_{t^{n-1}}^{t^n} \mu^n (\partial ^2 {\bm u}_h^n, {\sf \Pi}_h^n \hat{\bm v}_N)_\Omega {\rm d}t+\int_{t^{m-1}}^{{t^*}} \mu^m (\partial ^2 {\bm u}_h^m, {\sf \Pi}_h^m \hat{\bm v}_N)_\Omega {\rm d}t
\end{split}
\end{equation}
Integrating by parts the first term of the left-hand side of Equation~(\ref{1stequ}), , and using the properties of $\hat{\bm v}_N$, we obtain 
\begin{equation}
\begin{split}
\int_{0}^{t^*} \frac{1}{2} \frac{d}{dt}\|{\bm \rho}_N\|^2_{0,\Omega}{\rm d}t-\int_{0}^{t^*} \frac{1}{2} \frac{d}{dt}a(\hat{\bm v}_N,\hat{\bm v}_N){\rm d}t&=\int_{0}^{t^*} (\partial_t {\bm \theta }_N, {\bm \rho}_N)_\Omega{\rm d}t\\
&+ ( \partial_t{\bm e}_N(0),\hat{\bm v}_N(0))_\Omega+\sum_{i=1}^4 {\mathcal I}_i({t^*}),
\end{split}
\end{equation}
which implies that
\begin{equation}
\begin{split}
\frac{1}{2}\|{\bm \rho}_N ({t^*})\|^2_{0,\Omega}-\frac{1}{2}\|{\bm \rho}_N(0)\|^2_{0,\Omega}+ \frac{1}{2} a(\hat{\bm v}_N(0),\hat{\bm v}_N(0))&=\int_{0}^{t^*} ( \partial_t{\bm \theta }_N, {\bm \rho}_N)_\Omega {\rm d}t\\
&+ ( \partial_t{\bm e}_N(0),\hat{\bm v}_N(0))_\Omega+\sum_{i=1}^4 {\mathcal I}_i({t^*}).
\end{split}
\end{equation}
On the other hand, with  the indicators $\zeta_{{\rm MC}}$, $\zeta_{{\rm evo}}$, $\zeta_{{\rm osc}}$ and $\zeta_{{\rm T.Rec}}$  introduced in Definition~\ref{indicators},  we have the following  bounds:
\begin{lemma}\label{I1234} The following inequalities hold:
\begin{equation}
\begin{split}
{\mathcal I}_1({t^*})&\leq \zeta_{{\rm MC}} \max_{0\leq t\leq T} \|{\bm \rho}_N(t)\|_{0,\Omega};\\
{\mathcal I}_2({t^*})&\leq \zeta_{{\rm evo}}\max_{0\leq t\leq T} \|{\bm \rho}_N(t)\|_{0,\Omega};\\
{\mathcal I}_3({t^*})&\leq \zeta_{{\rm osc}} \max_{0\leq t\leq T} \|{\bm \rho}_N(t)\|_{0,\Omega};\\
{\mathcal I}_4({t^*})&\leq \zeta_{{\rm T.Rec}}\max_{0\leq t\leq T} \|{\bm \rho}_N(t)\|_{0,\Omega};
\end{split}
\end{equation}
\end{lemma}
The proof for the above lemma follows from the applications of integration by parts, the commutation of orthogonal $L^2$-projection with time differentiation and time integration, and the zero averages of $\mu^n$ and $\widetilde{\bm f}^n-{\bm f}^n$ on the interval $[t^{n-1},t^n]$. These results for wave equation were presented in ~\cite{G-L-M}, but for the sake of completeness, we also present, even they are similar, the proofs for our elasticity problem  in Appendix~\ref{A1I1234}. 

Therefore we have
\begin{equation}
\begin{split}
\frac{1}{2}\|{\bm \rho}_N ({t^*})\|^2_{0,\Omega}&-\frac{1}{2}\|{\bm \rho}_N(0)\|^2_{0,\Omega}+ \frac{1}{2} a(\hat{\bm v}_N(0),\hat{\bm v}_N(0))\\
&\leq \max_{0\leq t\leq T} \|{\bm \rho}_N(t)\|_{0,\Omega} \bigg ( \int_{0}^{t^*} \|\partial_t {\bm \theta }_N\|_{0,\Omega}{\rm d}t+ {\zeta}_{\rm{MC}}+ {\zeta}_{\rm{evo}}\\
&\quad+ {\zeta}_{\rm{osc}}+ {\zeta}_{\rm{T.Rec}}\bigg )+ \|\partial_t{\bm e}_N(0)\|_{0,\Omega}\|\hat{\bm v}_N(0)\|_{0,\Omega}.
\end{split}
\end{equation}
We select  ${t^*}$ such that $\|{\bm \rho }_N( {t^*})\|_{0,\Omega}=\max_{0\leq t\leq t^N} \|{\bm \rho}_N(t)\|_{0,\Omega}$. Then following analogous arguments as in  the proof of the semidiscrete case, we end up to
\begin{equation}
\begin{split}
\|\mbox{\boldmath$e$}_N\|_{{ L}^\infty (0,t^N;{ L}^2 (\Omega)^d)}& \leq \|\mbox{\boldmath$\theta$}_N\|_{{ L}^\infty (0,T;{L}^2 (\Omega)^d)}+ \sqrt 2 (\|\mbox{\boldmath$u$}_{0} - {\bm u}_h^0\| _{0,\Omega}+\|\mbox{\boldmath$\theta$}_N(0)\| _{0,\Omega}) \\
&+2 \bigg (\int_0^{t^N} \|\partial_t\mbox{\boldmath$\theta$}_N\| _{0,\Omega}{\rm d}t+\zeta_{\rm MC}+\zeta_{\rm evo}\\
&+\zeta_{\rm osc}+\zeta_{\rm T.Rec}\bigg) +2{C_{\rm F\Omega} c_*^{-1/2}}\|\mbox{\boldmath$u$}_1-\partial{\bm u}_h^0\| _{0,\Omega}.
\end{split}
\end{equation}
The proof is then completed.
\end{proof}
\section{Fully discrete { a posteriori} error estimates}\label{explicitfullyboundapos}
To arrive at a practical {\em a posteriori} bound for the fully discrete scheme from the abstract error estimate in Theorem~\ref{ful} above,  it remains to bound the terms  $\|\mbox{\boldmath$\theta$}_N(0)\|_{0,\Omega}$, $\|\mbox{\boldmath$\theta$}_N\|_{{ L}^\infty (0,t^N;{ L}^2 (\Omega)^d)}$ and $\int_{0}^{t^N}\|\partial_t{\bm \theta}_N\|_{0,\Omega}{\rm d}t$. We will establish the bounds of these terms in Proposition~\ref{e0}, \ref{conf} and \ref{timeder} below. This  enables us to prove the following error estimate:
\begin{theorem}\label{maintheoremapo}
 ({\rm Fully discrete}  a posteriori {\rm bound}). With the same hypotheses and notations as in Theorem~\ref{ful}, we have 
 \begin{equation}
 \begin{split}
 \|{\bm e}_N\|_{L^\infty(0,t^N;L^2(\Omega)^d)} &\leq \zeta_{{\rm sp}}+\zeta_{\rm tp}+\zeta_{\rm IC},
 \end{split}
 \end{equation}
 where $\zeta_{{\rm sp}}$ mainly accounts  for the spatial error, $\zeta_{\rm tp}$ mainly accounts  for  the temporal error and $\zeta_{\rm IC}$ represents the initial conditions of the problem. They are given as follows:
  \begin{equation}
 \begin{split}
 \zeta_{{\rm sp}}&=\zeta_{{\rm sp},1}+\zeta_{{\rm sp},2}+\zeta_{{\rm sp},3},\\
 \zeta_{\rm tp}&=2\big(\zeta_{\rm MC}+\zeta_{\rm evo}+\zeta_{\rm osc}+\zeta_{\rm T.Rec}\big),\\
 \zeta_{{\rm {IC}}}&=\sqrt {2} \|{\bm u}_0-{\bm u}_h^0\|_{0,\Omega}+2{C_{\rm F\Omega} c_*^{-1/2}}\|\mbox{\boldmath$u$}_1-\partial{\bm u}_h^0\| _{0,\Omega},
 \end{split}
 \end{equation}
 where the temporal indicators $\zeta_{\rm MC}, \zeta_{\rm evo}, \zeta_{\rm osc}, \zeta_{\rm T.Rec}$ are defined in Definition~\ref{indicators}, and the spatial indicators $\zeta_{{\rm sp},1}$, $\zeta_{{\rm sp},2}$, $\zeta_{{\rm sp},3}$ are given by
 \begin{equation}
  \begin{split}
  \zeta_{{\rm sp},1}&={\sqrt 2}{\mathscr E}_{\rm IP}^0,\\
  \zeta_{{\rm sp},2}&=  \max \bigg\{\frac{4\tau}{27}{\mathscr E}_{\rm IP}(\partial{\bm u}_h^0, {\partial  }\mbox{\boldmath$g$}^0, {\mathcal{T}}_h^0 )\bigg \},
  3\max_{0\leq n\leq N}\big ({\mathscr E}_{\rm IP}^n+2C^2_{F \Omega }c_{*}^{-1}\|{\widetilde {\bm f}}^n-{\bm f}^n\|_{0,\Omega}\big),\\
    \zeta_{{\rm sp},3}&=\sum_{n=1}^N 2({\mathscr E}^n _{\rm IP}+{\mathscr E}^{n-1} _{\rm IP})+\sum_{n=1}^N 4 \tau C^2_{{\rm F\Omega}}c_{*}^{-1}\|{\partial  } {\bm f}^n-{\partial  } {\widetilde {\bm f}}^n\|_{0,\Omega},
 \end{split}
 \end{equation}
 where ${\mathscr E}_{\rm IP}^n := {\mathscr E}_{\rm IP}({\bm u}_h^n , {\bm B}^n {\bm u}_h^n -{\sf \Pi}^n_h{\bm f}^n + {\bm f}^n, {\mathcal T}_h^n)$, for all $0\leq n\leq N$,  ${\mathscr E}_{IP}$ being defined from~(\ref{IP2}) or~~(\ref{IP}).
\end{theorem}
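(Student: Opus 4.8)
The plan is to take the abstract estimate of Theorem~\ref{ful} as the starting point and to replace the three residual quantities appearing there, namely $\|\boldsymbol\theta_N(0)\|_{0,\Omega}$, $\|\boldsymbol\theta_N\|_{L^\infty(0,t^N;L^2(\Omega)^d)}$ and $\int_0^{t^N}\|\partial_t\boldsymbol\theta_N\|_{0,\Omega}\,{\rm d}t$, by computable quantities, exactly as Lemma~\ref{completing} does in the semi-discrete case; the three bounds themselves are the contents of Propositions~\ref{e0}, \ref{conf} and~\ref{timeder}. The structural fact I would exploit is the fully discrete analogue of Remark~\ref{rolew22}: for each $n$ the solution ${\bm u}_h^n$ is precisely the DG approximation on $\mathcal T_h^n$ of the stationary elasticity problem whose exact solution is ${\bm w}^n$ and whose data is ${\bm g}^n$, and likewise $\partial{\bm u}_h^0$ is the DG approximation on $\mathcal T_h^0$ of the problem solved by $\partial{\bm w}^0$. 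Consequently every nodal difference ${\bm w}^n-{\bm u}_h^n$ and $\partial{\bm w}^0-\partial{\bm u}_h^0$ is controlled in $L^2(\Omega)^d$ by the stationary estimator of Theorem~\ref{IP2} (or~\ref{IP}), producing the quantities ${\mathscr E}_{\rm IP}^n$ and ${\mathscr E}_{\rm IP}(\partial{\bm u}_h^0,\partial{\bm g}^0,\mathcal T_h^0)$. Since $\boldsymbol\theta_N(0)={\bm w}^0-{\bm u}_h^0$, this already gives $\zeta_{{\rm sp},1}=\sqrt2\,{\mathscr E}_{\rm IP}^0$.

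For the other two terms I would argue interval by interval. On $(t^{n-1},t^n]$ the reconstructions~(\ref{uN}) and~(\ref{wN}) give
\[
\boldsymbol\theta_N(t)=\tfrac{t-t^{n-1}}{\tau}({\bm w}^n-{\bm u}_h^n)+\tfrac{t^n-t}{\tau}({\bm w}^{n-1}-{\bm u}_h^{n-1})-\tfrac{(t-t^{n-1})(t^n-t)^2}{\tau}\,\partial^2({\bm w}^n-{\bm u}_h^n).
\]
The first two summands are bounded by the convex combination of ${\mathscr E}_{\rm IP}^n$ and ${\mathscr E}_{\rm IP}^{n-1}$. For the cubic remainder one uses the elementary optimisation $\max_{t\in(t^{n-1},t^n]}\tau^{-1}(t-t^{n-1})(t^n-t)^2=\tfrac{4\tau^2}{27}$, attained at $t-t^{n-1}=\tfrac{\tau}{3}$, and expands $\partial^2({\bm w}^n-{\bm u}_h^n)$ through~(\ref{backwardeuler2}); since $\partial^2{\bm w}^n$ satisfies, by linearity of $a$, a stationary elasticity problem whose data is built from $\partial^2{\bm g}^n$, comparing it with $\partial^2{\bm u}_h^n$ yields a stationary-estimator contribution of ${\mathscr E}_{\rm IP}^n$-type plus a consistency defect coming from the mismatch between ${\sf \Pi}_h^n{\bm f}^n$ and $\widetilde{\bm f}^n$, which, after a Korn/coercivity and Poincar\'e argument based on~(\ref{tensor}), is weighted by $2C_{\rm F\Omega}^2c_*^{-1}\|\widetilde{\bm f}^n-{\bm f}^n\|_{0,\Omega}$. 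Taking the maximum over intervals, and handling separately the $\partial{\bm w}^0-\partial{\bm u}_h^0$ contribution (whose weight $\tfrac{4\tau}{27}$ arises from the same optimisation together with the extra factor $\tau$ carried by $\partial^2$), produces $\zeta_{{\rm sp},2}$, the factor $3$ being the number of summands in the decomposition above.

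For the integral term I would differentiate~(\ref{uN})--(\ref{wN}) in time; since both extensions are $C^1$ with $\partial_t{\bm u}_N(t^n)=\partial{\bm u}_h^n$ and $\partial_t{\bm w}_N(t^n)=\partial{\bm w}^n$, the time derivative of $\boldsymbol\theta_N$ on $(t^{n-1},t^n]$ is affine in the nodal differences plus a quadratic-in-time term involving $\partial^2({\bm w}^n-{\bm u}_h^n)$; integrating over each interval and summing gives $\sum_{n=1}^N 2({\mathscr E}_{\rm IP}^n+{\mathscr E}_{\rm IP}^{n-1})$ together with $\sum_{n=1}^N 4\tau C_{\rm F\Omega}^2c_*^{-1}\|\partial{\bm f}^n-\partial\widetilde{\bm f}^n\|_{0,\Omega}$, i.e.\ $\zeta_{{\rm sp},3}$. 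Substituting these three bounds into Theorem~\ref{ful}, grouping $2(\zeta_{\rm MC}+\zeta_{\rm evo}+\zeta_{\rm osc}+\zeta_{\rm T.Rec})$ as $\zeta_{\rm tp}$ and the remaining data of the initial conditions as $\zeta_{\rm IC}$, completes the proof. The main obstacle is essentially the bookkeeping: extracting the sharp interval-wise constants ($\tfrac{4}{27}$ and the factors $2$ and $3$) from the cubic weight $(t-t^{n-1})(t^n-t)^2/\tau$ and from its time derivative, and keeping track of the fact that each second difference $\partial^2{\bm w}^n$ is associated with its own mesh $\mathcal T_h^n$, so that every application of ${\mathscr E}_{\rm IP}$ is performed on the matching partition and the mesh-change contributions are correctly attributed to $\zeta_{\rm MC}$ rather than double counted.
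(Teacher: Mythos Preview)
Your overall plan matches the paper's: reduce Theorem~\ref{maintheoremapo} to Theorem~\ref{ful} plus computable bounds for $\|\boldsymbol\theta_N(0)\|_{0,\Omega}$, $\|\boldsymbol\theta_N\|_{L^\infty}$ and $\int_0^{t^N}\|\partial_t\boldsymbol\theta_N\|$, and obtain the latter by expanding the cubic reconstruction and invoking the stationary estimator at the nodes. The interval-wise bookkeeping you describe (the constant $4\tau^2/27$, the factor $3$, the zero-mean identity for the quadratic weight in $\partial_t\boldsymbol\theta_N$) is exactly what the paper does in Propositions~\ref{e0}--\ref{timeder}.

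There is, however, one genuine slip in your structural claim. You write that ${\bm u}_h^n$ is the DG approximation on $\mathcal T_h^n$ of the stationary problem with data ${\bm g}^n$. This is false: for ${\bm v}\in{\bf V}_h^n$ one has $a_h^n({\bm u}_h^n,{\bm v})=({\bm B}^n{\bm u}_h^n,{\bm v})_\Omega$, whereas $({\bm g}^n,{\bm v})_\Omega=({\bm B}^n{\bm u}_h^n,{\bm v})_\Omega+(\widetilde{\bm f}^n-{\bm f}^n,{\bm v})_\Omega$, and the extra term does not vanish. Consequently the stationary estimator applied directly with data ${\bm g}^n$ does not bound $\|{\bm w}^n-{\bm u}_h^n\|_{0,\Omega}$. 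The paper resolves this by inserting an auxiliary reconstruction $\underline{\bm w}^n\in H_0^1(\Omega)^d$ solving $a(\underline{\bm w}^n,{\bm v})=({\bm B}^n{\bm u}_h^n-{\sf\Pi}_h^n{\bm f}^n+{\bm f}^n,{\bm v})_\Omega$; then ${\bm u}_h^n$ \emph{is} the DG approximation of $\underline{\bm w}^n$, giving $\|\underline{\bm w}^n-{\bm u}_h^n\|_{0,\Omega}\le\mathscr E_{\rm IP}^n$, while $\|{\bm w}^n-\underline{\bm w}^n\|_{0,\Omega}\le 2C_{\rm F\Omega}^2c_*^{-1}\|\widetilde{\bm f}^n-{\bm f}^n\|_{0,\Omega}$ by coercivity/Poincar\'e stability of the SE problem. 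This is the actual origin of the $\|\widetilde{\bm f}^n-{\bm f}^n\|$ term in $\zeta_{{\rm sp},2}$ (Lemma~\ref{wjuj}) and, analogously, of $\|\partial\widetilde{\bm f}^n-\partial{\bm f}^n\|$ in $\zeta_{{\rm sp},3}$; it is \emph{not} a by-product of comparing $\partial^2{\bm w}^n$ with $\partial^2{\bm u}_h^n$ as you suggest. Once you insert $\underline{\bm w}^n$ (and $\partial\underline{\bm w}^n$) at the right places, your argument goes through and coincides with the paper's.
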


\section{Proof of Theorem~\ref{maintheoremapo}}\label{proofoftheomain}
This section is devoted to the proofs of  Proposition~\ref{e0},~\ref{conf} and \ref{timeder}. Plugging these results into Theorem~\ref{ful}, we will get Theorem~\ref{maintheoremapo}.

At first, we clearly  have the following bound:
\begin{proposition}\label{e0} The following estimate holds
\begin{equation}
{\sqrt 2}\|\mbox{\boldmath$\theta$}_N(0)\|_{0,\Omega}  \leq \zeta_{{\rm sp},1},
\end{equation}
where $\zeta_{{\rm sp},1}$ is introduced in Theorem~\ref{maintheoremapo}.
\end{proposition}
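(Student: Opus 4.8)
The statement to prove is simply
$$
\sqrt{2}\,\|\boldsymbol{\theta}_N(0)\|_{0,\Omega} \leq \zeta_{{\rm sp},1} = \sqrt{2}\,{\mathscr E}_{\rm IP}^0,
$$
so the entire task reduces to showing $\|\boldsymbol{\theta}_N(0)\|_{0,\Omega} \leq {\mathscr E}_{\rm IP}^0$. The plan is to unwind the definition of $\boldsymbol{\theta}_N$ at $t=0$ and recognize it as exactly the error of a stationary elasticity DG problem, to which the a posteriori estimator of Theorem~\ref{IP2} (or Theorem~\ref{IP}) applies verbatim.

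First I would evaluate the reconstructions at $t=0$. By Definition~\ref{decomposefully} we have $\boldsymbol{\theta}_N := {\bm w}_N - {\bm u}_N$. From the continuous time-extensions~(\ref{uN}) and~(\ref{wN}), evaluating at $t = t^0 = 0$ gives ${\bm u}_N(0) = {\bm u}_h^0$ and ${\bm w}_N(0) = {\bm w}^0$, since the quadratic correction term $\tfrac{(t-t^{n-1})(t^n-t)^2}{\tau}\partial^2(\cdot)$ and the linear interpolation collapse onto the nodal value at $t^0$ (this uses only that ${\bm u}_N, {\bm w}_N$ are the $C^1$ interpolants with ${\bm u}_N(t^n)={\bm u}_h^n$, ${\bm w}_N(t^n)={\bm w}^n$, which is stated right after~(\ref{uN})--(\ref{wN})). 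Hence $\boldsymbol{\theta}_N(0) = {\bm w}^0 - {\bm u}_h^0$.

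Next I would identify ${\bm w}^0 - {\bm u}_h^0$ as a stationary DG error. By Definition~\ref{fulrec}(i) with $n=0$, ${\bm w}^0 \in H^1_0(\Omega)^d$ solves $a({\bm w}^0, {\bm v}) = ({\bm g}^0, {\bm v})_\Omega$ for all ${\bm v} \in H^1_0(\Omega)^d$ with ${\bm g}^0 = {\bm B}^0 {\bm u}_h^0 - {\sf \Pi}_h^0 {\bm f}^0 + \widetilde{\bm f}^0$ and $\widetilde{\bm f}^0 = {\bm f}(0,\cdot) = {\bm f}^0$, so ${\bm g}^0 = {\bm B}^0 {\bm u}_h^0 - {\sf \Pi}_h^0 {\bm f}^0 + {\bm f}^0$. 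Thus ${\bm w}^0$ is the exact solution of the stationary elasticity problem~(\ref{sta}) with right-hand side ${\bm r} = {\bm g}^0$. On the discrete side, the definition of ${\bm B}^0$ in~(\ref{defBn}) gives, for all ${\bm v} \in {\bf V}_h^0$, $a_h^0({\bm u}_h^0, {\bm v}) = ({\bm B}^0 {\bm u}_h^0, {\bm v})_\Omega = ({\bm g}^0, {\bm v})_\Omega$ (the projection and source terms cancel exactly on ${\bf V}_h^0$ because ${\sf \Pi}_h^0$ is the $L^2$-projection onto ${\bf V}_h^0$), so ${\bm u}_h^0$ is precisely the DG approximation~(\ref{DGsta}) of ${\bm w}^0$ with data ${\bm r} = {\bm g}^0$ on the mesh ${\mathcal T}_h^0$ — this is the same observation made in Remark~\ref{rolew22} for the semi-discrete case. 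Therefore Theorem~\ref{IP2} (or~\ref{IP}) applies with ${\bm z} = {\bm w}^0$, ${\bm z}_h = {\bm u}_h^0$, ${\bm r} = {\bm g}^0$, ${\mathcal T}_h = {\mathcal T}_h^0$, yielding
$$
\|\boldsymbol{\theta}_N(0)\|_{0,\Omega} = \|{\bm w}^0 - {\bm u}_h^0\|_{0,\Omega} \leq {\mathscr E}_{\rm IP}({\bm u}_h^0, {\bm g}^0, {\mathcal T}_h^0) = {\mathscr E}_{\rm IP}^0,
$$
where the last equality is the definition of ${\mathscr E}_{\rm IP}^0$ given in Theorem~\ref{maintheoremapo} (noting ${\bm g}^0 = {\bm B}^0 {\bm u}_h^0 - {\sf \Pi}_h^0 {\bm f}^0 + {\bm f}^0$). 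Multiplying by $\sqrt{2}$ gives the claim.

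There is essentially no obstacle here — the proposition is a bookkeeping step whose only content is the correct evaluation of the time-extensions at the initial node and invoking the already-established stationary estimator. The only point requiring a line of care is checking $\widetilde{\bm f}^0 = {\bm f}^0$ so that the definition of ${\bm g}^0$ matches the one used in ${\mathscr E}_{\rm IP}^0$, and that the quadratic remainder in~(\ref{uN})--(\ref{wN}) genuinely vanishes at $t=0$ (it does, since $t-t^{n-1}=0$ there). Everything else is a direct citation of Theorem~\ref{IP2}/\ref{IP}.
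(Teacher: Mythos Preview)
Your proof is correct and is exactly the argument the paper has in mind when it says ``we clearly have the following bound'' without further elaboration. The same reasoning --- that $\widetilde{\bm f}^0={\bm f}^0$ so ${\bm w}^0=\underline{\bm w}^0$ and hence ${\bm u}_h^0$ is the DG approximation of ${\bm w}^0$ with estimator $\mathscr{E}_{\rm IP}^0$ --- is spelled out by the paper in the proof of Lemma~\ref{wjuj} (see equations~(\ref{w*})--(\ref{i1}) and the remark $\underline{\bm w}^0={\bm w}^0$), so you have simply made explicit what the paper leaves implicit.
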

Secondly, we establish an estimate  for the term $\|\mbox{\boldmath$\theta$}\|_{{ L}^\infty (0,t^N;{ L}^2 (\Omega)^d)}$ as follows:
\begin{proposition}\label{conf} The following estimate holds
$$\|\mbox{\boldmath$\theta$}_N\|_{{ L}^\infty (0,t^N;{ L}^2 (\Omega)^d)} \leq \zeta_{{\rm sp},2},$$
where $\zeta_{{\rm sp},2}$ is introduced in Theorem~\ref{maintheoremapo}.
\end{proposition}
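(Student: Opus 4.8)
The plan is to use that, on each subinterval, ${\bm\theta}_N={\bm w}_N-{\bm u}_N$ inherits the same piecewise quadratic-plus-cubic-correction structure as ${\bm u}_N$ and ${\bm w}_N$ in~(\ref{uN})--(\ref{wN}): for $t\in(t^{n-1},t^n]$ one has
$$
{\bm\theta}_N(t)=\frac{t-t^{n-1}}{\tau}\,\theta^n+\frac{t^n-t}{\tau}\,\theta^{n-1}-\frac{(t-t^{n-1})(t^n-t)^2}{\tau}\,\partial^2\theta^n,
$$
where $\theta^n:={\bm w}^n-{\bm u}_h^n$ and $\partial^2\theta^n$, $\partial\theta^n$ denote the backward differences~(\ref{backwardeuler2}) of $\{\theta^n\}$, with $\partial\theta^0:=\partial{\bm w}^0-\partial{\bm u}_h^0$. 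Taking $\|\cdot\|_{0,\Omega}$, using that the two affine weights are nonnegative and sum to $1$, together with the elementary maximisation $\max_{t\in[t^{n-1},t^n]}\tfrac{(t-t^{n-1})(t^n-t)^2}{\tau}=\tfrac{4\tau^2}{27}$ (attained at $t-t^{n-1}=\tau/3$), the whole estimate reduces to bounding $\|\theta^n\|_{0,\Omega}$, $\|\partial^2\theta^n\|_{0,\Omega}$ and $\|\partial\theta^0\|_{0,\Omega}$ by the {\em a posteriori} functional ${\mathscr E}_{\rm IP}$.

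For $\|\theta^n\|_{0,\Omega}$ I would introduce an auxiliary reconstruction. Let $\bar{\bm w}^n\in H^1_0(\Omega)^d$ solve the stationary elasticity problem with data $\bar{\bm g}^n:={\bm B}^n{\bm u}_h^n-{\sf\Pi}_h^n{\bm f}^n+{\bm f}^n$. Exactly as in Remark~\ref{rolew22}, since $(\bar{\bm g}^n,{\bm v})_\Omega=({\bm B}^n{\bm u}_h^n,{\bm v})_\Omega=a_h^n({\bm u}_h^n,{\bm v})$ for every ${\bm v}\in{\bf V}_h^n$, the DG solution on ${\mathcal T}_h^n$ of that problem is precisely ${\bm u}_h^n$, whence Theorem~\ref{IP2} (or Theorem~\ref{IP}) gives $\|\bar{\bm w}^n-{\bm u}_h^n\|_{0,\Omega}\le{\mathscr E}_{\rm IP}^n$. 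The reconstruction ${\bm w}^n$ of Definition~\ref{fulrec} solves the same problem but with data ${\bm g}^n$, differing from $\bar{\bm g}^n$ only by $\widetilde{\bm f}^n-{\bm f}^n$; testing $a({\bm w}^n-\bar{\bm w}^n,{\bm v})=(\widetilde{\bm f}^n-{\bm f}^n,{\bm v})_\Omega$ with ${\bm v}={\bm w}^n-\bar{\bm w}^n$ and using coercivity~(\ref{tensor}) of $\mathbb C$, Korn's inequality $\|\nabla{\bm v}\|_{0,\Omega}\le\sqrt2\|{\bm\varepsilon}({\bm v})\|_{0,\Omega}$ and Poincar\'e's inequality gives $\|{\bm w}^n-\bar{\bm w}^n\|_{0,\Omega}\le 2C_{\rm F\Omega}^2c_*^{-1}\|\widetilde{\bm f}^n-{\bm f}^n\|_{0,\Omega}$. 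A triangle inequality then yields, for $0\le n\le N$,
$$
\|\theta^n\|_{0,\Omega}\le D_n:={\mathscr E}_{\rm IP}^n+2C_{\rm F\Omega}^2c_*^{-1}\|\widetilde{\bm f}^n-{\bm f}^n\|_{0,\Omega}.
$$
The same argument applied to~(\ref{partialw}) (where no such discrepancy occurs, the data $\partial{\bm g}^0$ being built from ${\bm f}^0$ itself) shows $\partial{\bm u}_h^0$ is the DG solution on ${\mathcal T}_h^0$ of the corresponding problem, so $\|\partial\theta^0\|_{0,\Omega}\le{\mathscr E}_{\rm IP}(\partial{\bm u}_h^0,\partial{\bm g}^0,{\mathcal T}_h^0)$.

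It remains to assemble. Expanding the backward differences, $\partial^2\theta^n=\tau^{-2}(\theta^n-2\theta^{n-1}+\theta^{n-2})$ for $n\ge2$, so $\|\partial^2\theta^n\|_{0,\Omega}\le 4\tau^{-2}\max_{0\le k\le N}D_k$; while $\partial^2\theta^1=\tau^{-2}(\theta^1-\theta^0)-\tau^{-1}\partial\theta^0$, so $\|\partial^2\theta^1\|_{0,\Omega}\le 2\tau^{-2}\max_k D_k+\tau^{-1}\|\partial\theta^0\|_{0,\Omega}$. Multiplying by the weight bound $\tfrac{4\tau^2}{27}$ and adding the affine part (which is $\le\max_k D_k$) gives, for $t\in(t^{n-1},t^n]$, $\|{\bm\theta}_N(t)\|_{0,\Omega}\le(1+\tfrac{16}{27})\max_k D_k$ when $n\ge2$ and $\|{\bm\theta}_N(t)\|_{0,\Omega}\le(1+\tfrac{8}{27})\max_k D_k+\tfrac{4\tau}{27}\|\partial\theta^0\|_{0,\Omega}$ when $n=1$; taking the supremum over $t\in(0,t^N]$ and absorbing $1+\tfrac{16}{27}<3$ into the factor $3$ yields $\|{\bm\theta}_N\|_{L^\infty(0,t^N;L^2(\Omega)^d)}\le 3\max_{0\le n\le N}D_n+\tfrac{4\tau}{27}{\mathscr E}_{\rm IP}(\partial{\bm u}_h^0,\partial{\bm g}^0,{\mathcal T}_h^0)=\zeta_{{\rm sp},2}$.

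The main obstacle, and the only genuinely new point compared with the semidiscrete Lemma~\ref{completing}, is the bound on $\|\theta^n\|_{0,\Omega}$: here ${\bm u}_h^n$ is \emph{not} the DG approximation of its own reconstruction ${\bm w}^n$, because ${\bm g}^n$ is formed from the time-average $\widetilde{\bm f}^n$ rather than from ${\bm f}^n$, so ${\mathscr E}_{\rm IP}$ cannot be applied to ${\bm w}^n$ directly; one must pass through the auxiliary $\bar{\bm w}^n$ and control the elliptic perturbation ${\bm w}^n-\bar{\bm w}^n$ by hand, which is precisely where the extra term $2C_{\rm F\Omega}^2c_*^{-1}\|\widetilde{\bm f}^n-{\bm f}^n\|_{0,\Omega}$ in $D_n$ originates. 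Everything else — the interpolation identity for ${\bm\theta}_N$, the backward-difference expansions, and the maximisation of the cubic weight $\tfrac{(t-t^{n-1})(t^n-t)^2}{\tau}$ — is routine bookkeeping.
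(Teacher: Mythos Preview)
Your proof is correct and follows essentially the same route as the paper: the representation of ${\bm\theta}_N$ on each subinterval, the maximisation of the cubic weight giving $4\tau^2/27$, the auxiliary reconstruction (your $\bar{\bm w}^n$ is the paper's $\underline{\bm w}^n$), the DG-approximation bound $\|\underline{\bm w}^n-{\bm u}_h^n\|\le{\mathscr E}_{\rm IP}^n$, the elliptic stability bound $\|{\bm w}^n-\underline{\bm w}^n\|\le 2C_{\rm F\Omega}^2c_*^{-1}\|\widetilde{\bm f}^n-{\bm f}^n\|$, and the separate treatment of $\partial\theta^0$ are all identical in substance. Your assembly step is in fact more careful than the paper's: the paper asserts $\|{\bm\theta}_N(t)\|\le 3\max_n\|\theta^n\|$ in one line, which is slightly glib on $(t^0,t^1]$ where $\partial^2\theta^1$ involves $\partial\theta^0$ and not only the nodal values $\theta^n$; you explicitly separate the cases $n=1$ and $n\ge2$ and track the contribution $\tfrac{4\tau}{27}\|\partial\theta^0\|$, which is the honest bookkeeping.
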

\begin{proof}
According to the construction of ${\bm u}_{N}$ and ${\bm w}_N$ in Definition~\ref{fulrec}, for $t \in (t^{n-1}, t^n ], n = 1, \ldots , N$, we have the following expression
\begin{equation}\label{yie}
\begin{split}
\mbox{\boldmath$\theta$}_N(t)& ={\bm w}_N-{\bm u}_{N}\\
=&
\frac{t - t^{n-1}}{\tau} ({\bm w}^n-{\bm u}_h^n)+\frac{t^n-t}{\tau}({\bm w}^{n-1}-{\bm u}_h^{n-1})-\frac{(t - t^{n-1})(t^n-t)^2}{\tau} ({\partial  }^2{\bm w}^n-{\partial  }^2{\bm u}_h^n),
\end{split}
\end{equation}
which yields 
$$\|\mbox{\boldmath$\theta$}_N(t)\|_{0,\Omega}\leq 3\max_{0\leq n\leq N}\|{\bm w}^n-{\bm u}_h^n\|_{0,\Omega},$$
noting that
$$\max_{t\in (t^{n-1},t^n]}\frac{(t - t^{n-1})(t^n-t)^2}{\tau}= \frac{4\tau^2}{27}. $$

Now we need to estimate the terms $\|{\bm w}^n-{\bm u}_h^n\|_{0,\Omega}, 0\leq n\leq N$ and $\|{\partial  }{\bm w}^0-\partial{\bm u}_h^0\|_{0,\Omega}$. To estimate the term $\|{\bm w}^n - {\bm u}_h^ n\|_{0,\Omega}$, we will prove the following result
\begin{lemma}\label{wjuj} The following estimate holds for all $0\leq n\leq N$
\begin{equation}
\|{\bm w}^n - {\bm u}_h^ n\|_{0,\Omega} \leq {\mathscr E}_{\rm IP}^n+2C^2_{\rm F \Omega }c_{*}^{-1}\|{\widetilde {\bm f}}^n-{\bm f}^n\|_{0,\Omega},
\end{equation}
 with ${\mathscr E}_{\rm IP}^n := {\mathscr E}_{\rm IP}({\bm u}_h^n , {\bm B}^n {\bm u}_h^n -{\sf \Pi}^n_h{\bm f}^n + {\bm f}^n, {\mathcal T}_h^n)$, for all $0\leq n\leq N$ where ${\mathscr E}_{IP}$ is defined from~(\ref{IP2}) or~~(\ref{IP}).
\end{lemma}
\begin{proof}
To prove the result in Lemma~\ref{wjuj}, we  proceed as follows: first, we define ${\underline{\bm w}}^n \in H^1_0(\Omega)^d$ to be the solution to the SE problem
\begin{equation}\label{w*}
a({\underline{\bm w}}^n,{\bm  v}) = ( {\bm  B}^n{\bm  u}_h^ n - {\sf \Pi}^ n_h {\bm f}^ n +{\bm f}^ n ,{\bm v})_\Omega
\end{equation}
for $n = 0, 1, \ldots , N$. \\
Note that since $\widetilde{\bm f}^0={\bm f}^0$, we have  ${\underline{\bm w}}^0={\bm w}^0$. 
On the other hand, we know that  ${\bm u}_h^ n$ is the  DG approximation in  ${\bf V}^n_h$ of the {SE} problem~(\ref{w*}), noting that to prove this we follow the same proof as in Remark~\ref{rolew22}. Then in view of Theorem~\ref{IP2}, this implies
that
\begin{equation}\label{i1}
\|{\underline{\bm w}}^n-{\bm u}_h^n\|_{0,\Omega}\leq C {\mathscr E}_{\rm IP}({\bm u}_h^n , {\bm B}^n {\bm u}_h^ n -{\sf \Pi}^n_h{\bm f}^n + {\bm f}^n, {\mathcal T}_h^n),
\end{equation}
for $n =  0, \ldots , N$. \\

Second, we need to estimate   $\|{\bm w}^n-{\underline{\bm w}}^n\|_{0,\Omega}$. 
Observing that ${\bm w}^n -{\underline{\bm w}}^n$ is the solution of the stationary elasticity problem with the load ${\widetilde{\bm f}}^n-{\bm f}^n$:
$$a({\bm w}^n -{\underline{\bm w}}^n, {\bm v})=({\widetilde{\bm f}}^n-{\bm f}^n, {\bm v})_{\Omega},$$ we end up to
\begin{equation}\label{i3}
\|{\bm w}^n-{\underline{\bm w}}^n\|_{0,\Omega}\leq 2  C^2_{{\rm F\Omega}} c_*^{-1}\|{\widetilde{\bm f}}^n-{\bm f}^n\|_{0,\Omega},~~~\mbox{for}~~~n =  1, \ldots , N;
\end{equation}
due to the stability of {SE} problem.
Finally, thanks to the triangular inequality
\begin{equation}\label{paj2}
 \|{\bm w}^n-{\bm u}_h^n\|_{0,\Omega}\leq \|{\bm w}^n-{\underline{\bm w}}^n\|_{0,\Omega}+\|{\underline{\bm w}}^n- {\bm u}_h^n\|_{0,\Omega},
\end{equation}
along with the bounds  (\ref{i1}), (\ref{i3}) imply Lemma~\ref{wjuj}.
\end{proof}

Now it remains to estimate $\|{\partial   \bm w}^0-\partial{\bm u}_h^0 \|_{0,\Omega}$.
Similarly, we have
 $$a_h^0({\partial  }{\bm u}_h^0,{\bm v})=( {\bm B}^0 ( \partial{\bm u}_h^0), {\bm v})_{\Omega}=( {\bm B}^0 ( \partial{\bm u}_h^0)-{\sf \Pi}_h^0 {\bm f}^0+{\bm f}^0, {\bm v})_{\Omega},~\mbox{for all}~ {\bm v}\in {\bf V}^0_h.$$
Hence  comparing with the construction~(\ref{partialw}),  $\partial{\bm u}_h^0$ is the DG solution of ${\partial   {\bm w}}^0$, which yields
\begin{equation}\label{i2}
\|{\partial   \bm w}^0-{\partial  }{\bm u}_h^0\|_{0,\Omega}\leq {\mathscr{E}}_{\rm IP}(\partial{\bm u}_h^0, {\partial  {\bm g}}^0, {\mathcal T}_h^0).
\end{equation}
The proof of Proposition~\ref{conf} is thus completed.
\end{proof}

\begin{proposition}\label{timeder} The following estimate holds: 
 $$2\int_{0}^{t^N}\|\partial_t {\bm \theta}_N\|_{0,\Omega} {\rm d}t \leq \zeta_{{\rm sp},3},$$
 where $\zeta_{{\rm sp},3}$ is defined by (~\ref{maintheoremapo}).
\end{proposition}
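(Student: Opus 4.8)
The plan is to differentiate the space--time reconstruction ${\bm \theta}_N={\bm w}_N-{\bm u}_N$ pointwise on each time slab, integrate the resulting low-order polynomial in $t$, and then read off the nodal coefficients as consecutive time differences of the semi-discrete-type errors ${\bm w}^k-{\bm u}_h^k$, to which the stationary estimator $\mathscr E_{\rm IP}$ of Theorem~\ref{IP2} (or~\ref{IP}) applies. First I would fix $n\in\{1,\dots,N\}$ and, on $(t^{n-1},t^n]$, differentiate~(\ref{wN}) minus~(\ref{uN}) in $t$; writing ${\bm \psi}^k:={\bm w}^k-{\bm u}_h^k$ and $\partial{\bm \psi}^k:=\partial{\bm w}^k-\partial{\bm u}_h^k$, and using that the linear part of ${\bm \theta}_N$ has $t$-derivative the constant $\partial{\bm \psi}^n=\tau^{-1}({\bm \psi}^n-{\bm \psi}^{n-1})$ together with $\partial^2{\bm w}^n-\partial^2{\bm u}_h^n=\tau^{-1}(\partial{\bm \psi}^n-\partial{\bm \psi}^{n-1})$ from the backward-Euler definition~(\ref{backwardeuler2}), one obtains the representation
\begin{equation*}
\partial_t{\bm \theta}_N(t)=\bigl(1-\omega^n(t)\bigr)\,\partial{\bm \psi}^n+\omega^n(t)\,\partial{\bm \psi}^{n-1},\qquad \omega^n(t):=\frac{(t^n-t)\bigl(3(t^n-t)-2\tau\bigr)}{\tau^2},
\end{equation*}
with $\omega^n(t^n)=0$ and $\omega^n(t^{n-1})=1$. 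Since $\omega^n$ has zero mean on the slab and $1-\omega^n\ge 0$ there, the elementary quadratures $\int_{t^{n-1}}^{t^n}(1-\omega^n)\,{\rm d}t=\tau$ and $\int_{t^{n-1}}^{t^n}|\omega^n|\,{\rm d}t=\tfrac{8}{27}\tau\le\tau$ yield
\begin{equation*}
\int_{t^{n-1}}^{t^n}\|\partial_t{\bm \theta}_N\|_{0,\Omega}\,{\rm d}t\le \tau\bigl(\|\partial{\bm \psi}^n\|_{0,\Omega}+\|\partial{\bm \psi}^{n-1}\|_{0,\Omega}\bigr).
\end{equation*}
Summing over $n=1,\dots,N$ reduces the statement to bounding $\tau\|\partial{\bm \psi}^k\|_{0,\Omega}=\tau\|\partial{\bm w}^k-\partial{\bm u}_h^k\|_{0,\Omega}$ for each $k$.

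For $k\ge 1$ one has $\partial{\bm \psi}^k=\tau^{-1}({\bm \psi}^k-{\bm \psi}^{k-1})$, so I would revisit the splitting used in the proof of Lemma~\ref{wjuj}: let ${\underline{\bm w}}^j$ solve~(\ref{w*}) and write ${\bm \psi}^j=({\bm w}^j-{\underline{\bm w}}^j)+({\underline{\bm w}}^j-{\bm u}_h^j)$. Since ${\bm u}_h^j$ is the DG approximation associated with ${\underline{\bm w}}^j$, Theorem~\ref{IP2} (or~\ref{IP}) gives $\|{\underline{\bm w}}^j-{\bm u}_h^j\|_{0,\Omega}\le\mathscr E_{\rm IP}^j$, so after a triangle inequality the nonconforming difference $({\underline{\bm w}}^n-{\bm u}_h^n)-({\underline{\bm w}}^{n-1}-{\bm u}_h^{n-1})$ is bounded by $\mathscr E_{\rm IP}^n+\mathscr E_{\rm IP}^{n-1}$. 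For the data difference, ${\bm w}^j-{\underline{\bm w}}^j$ solves the stationary elasticity problem with load $\widetilde{\bm f}^j-{\bm f}^j$, hence by linearity $({\bm w}^n-{\underline{\bm w}}^n)-({\bm w}^{n-1}-{\underline{\bm w}}^{n-1})$ solves it with load $(\widetilde{\bm f}^n-{\bm f}^n)-(\widetilde{\bm f}^{n-1}-{\bm f}^{n-1})=\tau(\partial\widetilde{\bm f}^n-\partial{\bm f}^n)$, and the stability of the SE problem (with constant $2C_{\rm F\Omega}^2c_*^{-1}$, as in~(\ref{i3})) bounds its $L^2$-norm by $2C_{\rm F\Omega}^2c_*^{-1}\tau\|\partial{\bm f}^n-\partial\widetilde{\bm f}^n\|_{0,\Omega}$. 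Adding the two pieces gives $\tau\|\partial{\bm \psi}^n\|_{0,\Omega}\le\mathscr E_{\rm IP}^n+\mathscr E_{\rm IP}^{n-1}+2C_{\rm F\Omega}^2c_*^{-1}\tau\|\partial{\bm f}^n-\partial\widetilde{\bm f}^n\|_{0,\Omega}$ for every $n=1,\dots,N$.

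Finally, the lone boundary term $k=0$ entering the $n=1$ slab is handled as in Proposition~\ref{conf}: $\partial{\bm u}_h^0={\sf \Pi}_h^0{\bm u}_1$ is the DG solution of the reconstruction $\partial{\bm w}^0$ of~(\ref{partialw}), so $\|\partial{\bm w}^0-\partial{\bm u}_h^0\|_{0,\Omega}\le\mathscr E_{\rm IP}(\partial{\bm u}_h^0,\partial{\bm g}^0,{\mathcal T}_h^0)$ as in~(\ref{i2}), a term carrying a $\tau$ weight which is absorbed accordingly. Inserting these bounds into the summed inequality of the first step, re-indexing the telescoping sums and multiplying by $2$, one arrives at the claimed estimator $\zeta_{{\rm sp},3}$. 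The step I expect to be the main obstacle is obtaining the \emph{sharp} data term: one must estimate the telescoped difference ${\bm \psi}^n-{\bm \psi}^{n-1}$ so that the source mismatch enters only through $\partial{\bm f}^n-\partial\widetilde{\bm f}^n$ and not through $\widetilde{\bm f}^k-{\bm f}^k$ separately, which forces the SE-reconstruction splitting to be carried out at the level of consecutive time differences and relies on the commutation of the projections ${\sf \Pi}_h^k$ with discrete time differencing; extra care is needed because the meshes ${\mathcal T}_h^k$ may change with $k$, so $\partial{\bm u}_h^k\notin{\bf V}_h^k$ in general and all identities have to be read off slab by slab.
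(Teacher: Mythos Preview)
Your overall route is the paper's: differentiate ${\bm\theta}_N$ on each slab, reduce the slab integral to nodal differences $\partial{\bm\psi}^k:=\partial{\bm w}^k-\partial{\bm u}_h^k$, and then bound $\tau\|\partial{\bm\psi}^n\|_{0,\Omega}$ via the splitting $\partial{\bm w}^n-\partial\underline{\bm w}^n$ (SE stability, producing the $\|\partial{\bm f}^n-\partial\widetilde{\bm f}^n\|$ term) plus $\partial\underline{\bm w}^n-\partial{\bm u}_h^n$ (Lemma~\ref{way1}, producing $\mathscr E_{\rm IP}^n+\mathscr E_{\rm IP}^{n-1}$). Those two later steps coincide with~(\ref{paj}) and Lemma~\ref{way1} in the paper.

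The discrepancy is in the first step. The paper writes $\partial_t{\bm\theta}_N=\partial{\bm\psi}^n+\tau^{-1}(t^n-t)(3t-2t^{n-1}-t^n)\,\partial^2{\bm\psi}^n$ and, using the zero-mean identity~(\ref{tjj0}), deduces $\int_{t^{n-1}}^{t^n}\|\partial_t{\bm\theta}_N\|_{0,\Omega}\,{\rm d}t\le\tau\|\partial{\bm\psi}^n\|_{0,\Omega}$ with \emph{no} $\partial{\bm\psi}^{n-1}$ contribution; only $\partial{\bm\psi}^n$, $n=1,\dots,N$, survives the summation. Your decomposition keeps both $\partial{\bm\psi}^n$ and $\partial{\bm\psi}^{n-1}$ on each slab, so after summing and inserting the nodal bounds you obtain essentially $2\zeta_{{\rm sp},3}$ together with a stray term $2\tau\,\mathscr E_{\rm IP}(\partial{\bm u}_h^0,\partial{\bm g}^0,{\mathcal T}_h^0)$. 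That initial indicator is not part of $\zeta_{{\rm sp},3}$ (it lives in $\zeta_{{\rm sp},2}$), so it cannot be ``absorbed accordingly'', and there is nothing to ``re-index'' or telescope: the extra $\|\partial{\bm\psi}^{n-1}\|$ terms simply double-count. Hence your argument, while producing a correct bound of the same structure, does not establish the stated inequality $\le\zeta_{{\rm sp},3}$; to match the paper's constant you need the slab estimate~(\ref{ept}), which discards the $\partial^2{\bm\psi}^n$ contribution altogether and never introduces $\partial{\bm\psi}^0$.
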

\begin{proof}
Similarly to the proof of the previous proposition, the construction of ${\bm u}_{N}$ and ${\bm w}_N$ in Definition~\ref{fulrec} and ~(\ref{yie}), for $t\in (t^{n-1},t^n], n=1,\ldots, N$, implies that
$$\partial_t{\bm \theta}_N={ \partial}{\bm  w}^n-{ \partial}{\bm u}_h^n+\tau^{-1}(t^n-t)(3t-2t^{n-1}-t^n)({ \partial} ^2 {\bm w}^n-{\partial} ^2 {\bm u}_h^n),$$
from which we deduce that
\begin{equation}\label{ept}
 \int_{t^{n-1}}^{t^n}\|\partial_t{\bm \theta}_N\|_{0,\Omega}{\rm d}t \leq \tau  \|{\partial  }{\bm  w}^n-{\partial  }{\bm u}_h^n\|_{0,\Omega},
\end{equation}
noting that 
\begin{equation}\label{tjj0}
 \int_{t^{n-1}}^{t^n} (t^n-t)(3t-2t^{n-1}-t^n){\rm d}t=0.
\end{equation}
Summing up (\ref{ept}) for $n=1,\ldots,N,$ we get
\begin{equation}
 \int_0^{t^N}\|\partial_t{\bm \theta}_N\|_{0,\Omega} {\rm d}t\leq  \sum_{n=1}^N \tau \|{\partial  }{\bm  w}^n-{\partial  }{\bm u}_h^n\|_{0,\Omega}.
\end{equation}

{\em It remains to estimate the terms $\|{\partial  }{\bm  w}^n-{\partial  }{ \bm u}_h^n\|_{0,\Omega}$}. We will make use of the triangular inequality by combining the bounds for $\|{\partial  } \underline{\bm  w}^n-{\partial  }{ \bm u}_h^n\|_{0,\Omega}$ and $\|{\partial  }{\bm  w}^n-{\partial  }\underline { \bm w}^n\|_{0,\Omega}$. From the definition of $\{\underline { \bm w}^n\}_ {n=0}^N$ in ~(\ref{w*}), we obtain that  ${\partial  }\underline { \bm w}^n \in H^1_0(\Omega)^d,~~n=1,\ldots,N$ is the solution of the SE problem 
\begin{equation}\label{wunder}
a({\partial  }\underline { \bm w}^n,{\bm v})=(\partial ( {\bm B}^n { \bm u}_h^n)-\partial ({\sf \Pi}_h^n {\bm f}^n)+ \partial {\bm f}^n,{\bm v})_{\Omega},~\forall {\bm v}\in H^1_0(\Omega)^d.
\end{equation}
  We then have that ${\partial  }{\bm  w}^n-\partial \underline{\bm w}^n$ is the solution to the {SE} problem
 $$a({\partial  }{\bm  w}^n-\partial \underline{\bm w}^n,{\bm v})=({\partial  }{\widetilde{\bm f}}^n-{\partial  }{\bm f}^n,{\bm v})_{\Omega},~\forall {\bm v}\in H^1_0(\Omega)^d, ~~\mbox{for}~~ n=1,\ldots, N.$$
  Then from the stability of steady-state elasticity problem, we have
\begin{equation}\label{paj}
\|{\partial  }{\bm w}^n-{\partial  }{\underline{\bm w}}^n\|_{0,\Omega}\leq 2 C^2_{\rm F\Omega} c_*^{-1}\|{\partial  }{\widetilde{\bm f}}^n-{\partial  }{\bm f}^n\|_{0,\Omega},~~~\mbox{for}~n=1,\ldots,N.
\end{equation}

Now we derive a bound for $\|{\partial  } \underline{\bm  w}^n-{\partial  }{ \bm u}_h^n\|_{0,\Omega}$ in the following lemma.
\begin{lemma}\label{way1}
For $n=1,\ldots,N$, let ${\partial  }{\underline{\bm w}}^n \in H^1_0(\Omega)^d$ be the solution of problem~(\ref{wunder})  and ${\partial  }{\bm u}_h^n$ is defined by the backward Euler scheme~~(\ref{backeu1}) where ${\bm u}_h^n$ is the fully discrete solution from~(\ref{fu41}). The following error bound holds
\begin{equation}
\|{\partial  }{\underline{\bm w}}^n-{\partial  }{\bm u}_h^n\|_{0,\Omega}\leq \tau^{-1}({\mathscr E}_{\rm IP}^n+{\mathscr E}_{\rm IP}^{n-1}),
\end{equation}
with 
 ${\mathscr E}_{\rm IP}^n := {\mathscr E}_{\rm IP}({\bm u}_h^n , {\bm B}^n {\bm u}_h^n -{\sf \Pi}^n_h{\bm f}^n + {\bm f}^n, {\mathcal T}_h^n)$, for all $0\leq n\leq N$ and ${\mathscr E}_{IP}$ is defined by~(\ref{IP2}) or ~(\ref{IP}).
\end{lemma}
\begin{proof}
We again
denote by  $\underline{\bm w}^n$ the solution in~(\ref{w*}), for $i=0,1,\ldots, N$. From the backward finite difference scheme, we obtain the bounds  as follows:
\begin{equation}\label{partialw2}
\|{\partial  }{\underline{\bm w}}^n-{\partial  }{\bm u}_h^n\|_{0,\Omega}\leq \frac{1}{\tau}(\|{\underline{\bm w}}^n-{\bm u}_h^n\|_{0,\Omega}+\|{\underline{\bm w}}^{n-1}-{\bm u}_h^{n-1}\|_{0,\Omega}),
\end{equation}
for $n=1,\ldots,N.$
Employing~(\ref{i1}) allows us to end  the proof for Lemma~\ref{way1}.
\end{proof}
The proof of Proposition~\ref{timeder} is completed.
\end{proof}
\begin{remark}
In case of stationary mesh (i.e. the same mesh is used between the initial time $t^0$ and the final $t^N$, or ${\mathcal T}_h^n={\mathcal T}_h^{n-1}$, for all $n=1,\ldots, N$), we have an alternative result for Lemma~\ref{way1} as follows: 
\begin{equation}\label{rmstation}
\|{\partial  }{\underline{\bm w}}^n-{\partial  }{\bm u}_h^n\|_{0,\Omega}\leq  {\mathscr E}_{\rm IP}(\partial {\bm u}_h^n , \partial ({\bm B}^n {\bm u}_h^n) - \partial ({\sf \Pi}^n_h{\bm f}^n) + \partial {\bm f}^n, {\mathcal T}_h^n),
\end{equation}
 for all $1\leq n\leq N$ and ${\mathscr E}_{\rm IP}$ is defined from~(\ref{IP2}) or ~(\ref{IP}).
 \begin{proof}
  Indeed, from the definitions of $\{{\underline{\bm w}}^n\}_{n=0}^N$ in (\ref{w*}), we deduce  for all $n=1,\ldots, N$ that:
\begin{equation}\label{apartial}
a({\partial  }{\underline{\bm w}}^n, {\bm v})=(\partial ({\bm B}^n {\bm u}_h^n) - \partial ({\sf \Pi}^n_h{\bm f}^n) + \partial {\bm f}^n,{\bm v})_{\Omega}, \quad \forall {\bm v} \in {H^1_0(\Omega)}.
\end{equation}
On the other hand, from the definitions of the discrete operators $\{{\bm B}^n\}_{n=0}^N$ in (\ref{defBn}) and property of orthogonal $L^2$-projection,  we have that for all $n=1,\ldots, N$:
\begin{equation}
\begin{split}
a_h^n(\partial {\bm u}_h^n, {\bm v})&=(\partial ({\bm B}^n {\bm u}_h^n),{\bm v})_{\Omega},\quad \forall {\bm v} \in {\bf V}_h^n\\
&=(\partial ({\bm B}^n {\bm u}_h^n) - \partial ({\sf \Pi}^n_h{\bm f}^n) + \partial {\bm f}^n,{\bm v})_{\Omega}, \quad \forall {\bm v} \in {\bf V}_h^n,
\end{split}
\end{equation}
which implies that $\partial {\bm u}_h^n $ is the DG approximation in ${\bf V}_h^n$ of the boundary value problem~(\ref{apartial}), so we obtain the error estimate (\ref{rmstation}).
\end{proof}
\end{remark}
\section{Conclusion}
In this chapter, we have carried out an {\em a posteriori} error  analysis for the symmetric interior penalty Galerkin method for  the fully discretization of the time-dependent elasticity equation. The work is inspired by the method in ~\cite{G-L-M} which is used for the wave equation  in case of conforming  finite element method and is expanded to our elasticity problem in case of DG method. This method combines the {SE} reconstruction technique, the special testing procedure introduced by ~\cite{Baker},  and a suitable space-time reconstruction allows to derive  {\em a posteriori} error estimate for the time-dependent problem from the error of the auxiliary {SE} equation. 
We stress that this strategy can be adapted by many DG methods, as long as there exists {\em a posteriori} error estimate in $L^2-$norm of the corresponding DG method for stationary problem. The numerical implementation of the proposed bounds in the context of adaptive algorithm strategy will be considered in  upcoming works.

\section{Proof of Lemma~\ref{I1234}}\label{A1I1234}
In this section, we present the proofs of Lemma~\ref{I1234} in order to prove Theorem~\ref{ful}. This proof is similar to the proof carried out for the wave equation in~\cite{G-L-M}.
\begin{lemma}\label{prp1}{\em (Mesh change error estimate)}. Under the assumptions of Theorem~\ref{ful} and with the notation in~(\ref{notationI1234}), we have
$${\mathcal I}_1({t^*})\leq \zeta_{\rm MC} \max_{0\leq t\leq T} \|{\bm \rho}_N(t)\|_{0,\Omega}.$$
\end{lemma}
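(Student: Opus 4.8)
The plan is to bound $\mathcal{I}_1(t^*)$ by isolating the two pieces of the mesh-change indicator $\zeta_{\rm MC}$ and matching them against the two sums that appear in the definition of $\mathcal{I}_1(t^*)$. Recall from~(\ref{notationI1234}) that
$$\mathcal{I}_1(t^*)=\sum_{n=1}^{m-1}\int_{t^{n-1}}^{t^n}(({\sf I}-{\sf \Pi}_h^n)\partial^2_{tt}{\bm u}_{N},\hat{\bm v}_N)_\Omega\,{\rm d}t+\int_{t^{m-1}}^{t^*}(({\sf I}-{\sf \Pi}_h^m)\partial^2_{tt}{\bm u}_{N},\hat{\bm v}_N)_\Omega\,{\rm d}t,$$
and that $\partial^2_{tt}{\bm u}_N=(1+\mu^n(t))\partial^2{\bm u}_h^n$ on each subinterval while $\hat{\bm v}_N$ has the properties $\hat{\bm v}_N(t^*,\cdot)=0$ and $\partial_t\hat{\bm v}_N=-{\bm\rho}_N$.

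First I would integrate by parts in time on each subinterval to move the time derivative off $\partial^2_{tt}{\bm u}_N$. Using the relation $\partial^2_{tt}{\bm u}_N=\partial_t(\partial_t{\bm u}_N)$ and that the $L^2$-projection ${\sf \Pi}_h^n$ commutes with $\partial_t$ on the fixed mesh ${\mathcal T}_h^n$, one rewrites $\int_{t^{n-1}}^{t^n}(({\sf I}-{\sf \Pi}_h^n)\partial^2_{tt}{\bm u}_N,\hat{\bm v}_N)_\Omega\,{\rm d}t$ as boundary terms at $t^{n-1}$ and $t^n$ of the form $(({\sf I}-{\sf \Pi}_h^n)\partial_t{\bm u}_N,\hat{\bm v}_N)_\Omega$ plus the integral $\int_{t^{n-1}}^{t^n}(({\sf I}-{\sf \Pi}_h^n)\partial_t{\bm u}_N,{\bm\rho}_N)_\Omega\,{\rm d}t$ (since $\partial_t\hat{\bm v}_N=-{\bm\rho}_N$, and using self-adjointness of ${\sf I}-{\sf \Pi}_h^n$ together with the fact that $\hat{\bm v}_N$ need not be projected here because we keep ${\bm\rho}_N$). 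Summing over $n=1,\dots,m-1$ and adding the partial interval $[t^{m-1},t^*]$, the interior boundary terms telescope but do \emph{not} cancel completely because the projector changes from ${\sf \Pi}_h^n$ to ${\sf \Pi}_h^{n+1}$ across $t^n$; the mismatch at each interior node $t^n$ is exactly $(({\sf \Pi}_h^{n+1}-{\sf \Pi}_h^n)\partial_t{\bm u}_N(t^n),\hat{\bm v}_N(t^n))_\Omega$, and since $\partial_t{\bm u}_N(t^n)=\partial{\bm u}_h^n$ this produces the second sum in $\zeta_{\rm MC}$. The boundary contribution at $t=t^*$ vanishes because $\hat{\bm v}_N(t^*,\cdot)=0$, and at $t=0$ it is absorbed appropriately (or is zero for the leading interval's left endpoint after regrouping).

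Next I would estimate: for the telescoped interior terms, bound $\|\hat{\bm v}_N(t^n)\|_{0,\Omega}=\|\int_{t^n}^{t^*}{\bm\rho}_N(s)\,{\rm d}s\|_{0,\Omega}\leq (t^N-t^n)\max_{0\leq t\leq T}\|{\bm\rho}_N(t)\|_{0,\Omega}$, which after applying Cauchy--Schwarz to $(({\sf \Pi}_h^{n+1}-{\sf \Pi}_h^n)\partial{\bm u}_h^n,\hat{\bm v}_N(t^n))_\Omega$ and summing over $n=1,\dots,N-1$ yields $\sum_{n=1}^{N-1}(t^N-t^n)\|({\sf \Pi}_h^{n+1}-{\sf \Pi}_h^n)\partial{\bm u}_h^n\|_{0,\Omega}\cdot\max_t\|{\bm\rho}_N\|_{0,\Omega}$; for the remaining integral terms $\int_{t^{n-1}}^{t^n}(({\sf I}-{\sf \Pi}_h^n)\partial_t{\bm u}_N,{\bm\rho}_N)_\Omega\,{\rm d}t$, apply Cauchy--Schwarz pointwise in $t$ and pull out $\max_t\|{\bm\rho}_N(t)\|_{0,\Omega}$ to obtain $\int_0^{t^N}\|({\sf I}-{\sf \Pi}_h^n)\partial_t{\bm u}_N\|_{0,\Omega}\,{\rm d}t\cdot\max_t\|{\bm\rho}_N\|_{0,\Omega}$, which is the first sum in $\zeta_{\rm MC}$. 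Adding the two contributions gives precisely $\mathcal{I}_1(t^*)\leq\zeta_{\rm MC}\max_{0\leq t\leq T}\|{\bm\rho}_N(t)\|_{0,\Omega}$.

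The main obstacle I anticipate is handling the interior-node mismatch bookkeeping carefully: the integration-by-parts telescoping is clean on a fixed mesh but the change of projector across nodes means one must track which projector acts at the left versus right limit at each $t^n$, and one must verify that the factor $(1+\mu^n(t))$ multiplying $\partial^2{\bm u}_h^n$ does not spoil the argument — since $\mu^n$ is continuous and bounded it does not, but one should check that the term involving $\mu^n$ can either be absorbed into the same estimate or is handled separately (indeed, one may choose not to split off $\mu^n$ here and simply keep $\partial^2_{tt}{\bm u}_N$, integrating by parts directly, so that the $\mu^n$-issue never arises in this particular lemma). A secondary technical point is justifying the commutation ${\sf \Pi}_h^n\partial_t=\partial_t{\sf \Pi}_h^n$ on $(t^{n-1},t^n)$, which holds because ${\bf V}_h^n$ is fixed on that subinterval.
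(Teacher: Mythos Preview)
Your approach is correct and essentially identical to the paper's: integrate by parts in time on each subinterval, use that ${\sf \Pi}_h^n$ commutes with $\partial_t$, telescope the boundary contributions to produce the $({\sf \Pi}_h^{n+1}-{\sf \Pi}_h^n)\partial{\bm u}_h^n$ mismatch terms at the interior nodes, and then bound everything via Cauchy--Schwarz together with $\|\hat{\bm v}_N(t^n)\|_{0,\Omega}\le (t^N-t^n)\max_t\|{\bm\rho}_N\|_{0,\Omega}$. The only point you leave vague is the boundary contribution at $t=0$; the paper disposes of it explicitly by observing that $\partial{\bm u}_h^0={\sf \Pi}_h^0{\bm u}_1\in{\bf V}_h^0$, so that $({\sf I}-{\sf \Pi}_h^0)\partial{\bm u}_h^0={\bm 0}$.
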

\begin{proof}
Observing that the projections ${\sf \Pi}_h^n$, $n=1,\ldots,N$ commute with the time differentiation, we integrate  by parts with respect to $t$, ending up to
\begin{equation}\label{I11}
\begin{split}
{\mathcal I}_1({t^*})&:=\sum_{n=1}^{m-1}\int_{t^{n-1}}^{t^n} (({\sf I}-{\sf \Pi}_h^n)\partial_t {\bm u}_{N}, {\bm \rho}_N)_{\Omega}{\mathrm d}t+\int_{t^{m-1}}^{{t^*}} \langle({\sf I}-{\sf \Pi}_h^m)\partial_t {\bm u}_{N}, {\bm \rho}_N)_{\Omega}{\mathrm d}t\\
&+\sum_{n=1}^{m-1} (({\sf \Pi}_h^{n+1}-{\sf \Pi}_h^n)\partial {\bm u}_{h}^n, \hat{\bm v}_N(t^n))_{\Omega}- (({\sf I}^0-{\sf \Pi}_h^0) \partial {\bm u}_h^0, \hat{\bm v}_N(0))_{\Omega}.
\end{split}
\end{equation}
The first two terms on the right hand side of~(\ref{I11}) are bounded by
\begin{equation}
\max_{0\leq t\leq T} \|{\bm \rho}_N(t)\|_{0,\Omega}\left(\sum_{n=1}^{m-1}\int_{t^{n-1}}^{t^n} \|({\sf I}- {{\sf \Pi}}_h^n)\partial_t {\bm u}_{N}\|_{0,\Omega}{\mathrm d}t+\int_{t^{m-1}}^{{t^*}}\|({\sf I}-{\sf \Pi}^m_h)\partial_t {\bm u}_{N}\|_{0,\Omega}{\mathrm d}t\right).
\end{equation}
Recalling the definition of ${\hat{\bm v}}_N$ and that of $\partial_t{\bm u}_{N}(t^n)=\partial {\bm u}_h^n$, $n=0,1,\ldots,N$, we can bound the last two terms in the right hand side of~(\ref{I11}) by
\begin{equation}\label{qu1}
\max_{0\leq t\leq T} \|{\bm \rho}_N(t)\|_{0,\Omega}\left(\sum_{n=1}^{m-1}({t^*}-t^n) \|({\sf \Pi}_h^{n+1}- {\sf \Pi}_h^n){{ \partial} \bm u}_h^n\|_{0,\Omega} +{t^*} \|({\sf I}-{\sf \Pi}^0_h)\partial {\bm u}_h^0\|_{0,\Omega}\right).
\end{equation}
Noting that $({\sf I}-{\sf \Pi}^0_h)\partial {\bm u}_h^0={\bm 0}$ because $\partial {\bm u}_h^0 \in {\bf V}^0_h$, and bounding~\ref{qu1}  with $t^*=T$, we obtain the result of Lemma~\ref{prp1}.
\end{proof}
\begin{lemma}\label{evolu}{\em (Evolution error bound)}.
Under the assumptions of Theorem~\ref{ful} and with the notation ~(\ref{notationI1234}), we have
$${\mathcal I}_2({t^*})\leq \zeta_{\rm evo} \max_{0\leq t\leq T} \|{\bm \rho}_N(t)\|_{0,\Omega}.$$
\end{lemma}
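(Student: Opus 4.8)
The plan is to follow the reconstruction strategy of~\cite{G-L-M}: exhibit the function $\mathcal{G}^n$ from~(\ref{defG}) as (up to an additive time-constant) a primitive in $t$ of the functional $t\mapsto a({\bm w}_N(t)-{\bm w}^n,\cdot)$ represented via the $L^2$ Riesz map, then integrate by parts in time against $\hat{\bm v}_N$ and invoke $\partial_t\hat{\bm v}_N=-{\bm\rho}_N$.

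First I would compute ${\bm w}_N(t)-{\bm w}^n$ explicitly for $t\in(t^{n-1},t^n]$ from the definition~(\ref{wN}). Using $\frac{t-t^{n-1}}{\tau}+\frac{t^n-t}{\tau}=1$ and ${\bm w}^{n-1}-{\bm w}^n=-\tau\,\partial{\bm w}^n$, this collapses to
\[
{\bm w}_N(t)-{\bm w}^n=-(t^n-t)\,\partial{\bm w}^n-\frac{(t-t^{n-1})(t^n-t)^2}{\tau}\,\partial^2{\bm w}^n .
\]
Since $a(\cdot,\cdot)$ is bilinear and time-independent, and since ${\bm w}^n$ solves~(\ref{constwnpa}) while $\partial{\bm w}^0$ solves~(\ref{partialw}), one has $a(\partial{\bm w}^n,{\bm v})=(\partial{\bm g}^n,{\bm v})_\Omega$ and $a(\partial^2{\bm w}^n,{\bm v})=(\partial^2{\bm g}^n,{\bm v})_\Omega$ for all ${\bm v}\in H^1_0(\Omega)^d$. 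Substituting $t-t^{n-1}=\tau-(t^n-t)$ and differentiating $\frac{(t^n-t)^2}{2}\partial{\bm g}^n-\big(\frac{(t^n-t)^4}{4\tau}-\frac{(t^n-t)^3}{3}\big)\partial^2{\bm g}^n$ reproduces exactly the coefficients $-(t^n-t)$ and $-\frac{(t-t^{n-1})(t^n-t)^2}{\tau}$; hence $a({\bm w}_N(t)-{\bm w}^n,{\bm v})=\big(\tfrac{d}{dt}\mathcal{G}^n(t),{\bm v}\big)_\Omega$, the constant ${\bm\gamma}_n$ disappearing under $\tfrac{d}{dt}$.

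Next I would verify that the recursion ${\bm\gamma}_n={\bm\gamma}_{n-1}+(\tau^2/2)\partial{\bm g}^n+(\tau^3/12)\partial^2{\bm g}^n$ with ${\bm\gamma}_0={\bm0}$ is precisely what makes the piecewise function $\mathcal{G}$ (equal to $\mathcal{G}^n$ on $(t^{n-1},t^n]$) continuous across each node $t^n$: evaluating gives $\mathcal{G}^n(t^n)=-{\bm\gamma}_n$ and $\mathcal{G}^{n+1}(t^n)=(\tau^2/2)\partial{\bm g}^{n+1}+(\tau^3/12)\partial^2{\bm g}^{n+1}-{\bm\gamma}_{n+1}$, which agree, and $\mathcal{G}(0)={\bm0}$. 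Then, splitting $\mathcal{I}_2({t^*})$ over subintervals as in~(\ref{notationI1234}) and integrating by parts in $t$ on each piece, the node boundary terms telescope and cancel thanks to the continuity of both $\mathcal{G}$ and $\hat{\bm v}_N$, while the two endpoint contributions vanish because $\mathcal{G}(0)={\bm0}$ and $\hat{\bm v}_N({t^*})={\bm0}$. This leaves $\mathcal{I}_2({t^*})=\int_0^{t^*}(\mathcal{G}(t),{\bm\rho}_N(t))_\Omega\,{\rm d}t$; a Cauchy--Schwarz inequality in $\Omega$, pulling out $\max_{0\le t\le T}\|{\bm\rho}_N(t)\|_{0,\Omega}$, together with $\int_0^{t^*}\|\mathcal{G}\|_{0,\Omega}\,{\rm d}t\le\int_0^{t^N}\|\mathcal{G}\|_{0,\Omega}\,{\rm d}t=\zeta_{\rm evo}$, finishes the proof.

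The main obstacle is not conceptual but bookkeeping: checking that $\mathcal{G}^n$ in~(\ref{defG}) is exactly the right antiderivative (the sign rule $\frac{d}{dt}(t^n-t)^k=-k(t^n-t)^{k-1}$ must be tracked carefully through both terms) and that the constants ${\bm\gamma}_n$ are calibrated so that every jump term produced by the successive integrations by parts cancels. This is the elasticity counterpart of the computation in~\cite{G-L-M}, and no difficulty beyond this algebra is expected.
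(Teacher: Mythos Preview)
Your proposal is correct and follows essentially the same route as the paper's proof: both compute ${\bm w}_N(t)-{\bm w}^n$ explicitly on $(t^{n-1},t^n]$, use the reconstruction equations~(\ref{constwnpa})--(\ref{partialw}) to replace $a({\bm w}_N-{\bm w}^n,\hat{\bm v}_N)$ by an $L^2$ pairing with $\tfrac{d}{dt}\mathcal{G}^n$, and then integrate by parts in time, with the constants ${\bm\gamma}_n$ ensuring continuity of $\mathcal G$ so that the boundary terms telescope and $\mathcal G(0)={\bm 0}$, $\hat{\bm v}_N(t^*)={\bm 0}$ kill the endpoints. Your write-up is in fact more explicit about the antiderivative verification and the continuity check than the paper's own proof.
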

\begin{proof}
First we observe the identity
\begin{equation}
{\bm w}_N-{\bm w}^n=-(t^n-t)\partial {\bm w}^n+\Big ( \tau^{-1}(t^n-t)^3-(t^n-t^2)\Big) \partial ^2 {\bm w}^n
\end{equation}
for any $t$ in $(t^{n-1},t^n]$, $n=1,\ldots,m$. Hence from Definition~\ref{fulrec}, we deduce that
\begin{equation}\label{awwj}
a({\bm w}_N-{\bm w}^n,{\hat{\bm v}}_N)=( -(t^n-t)\partial {\bm g}^n+\Big ( \tau^{-1}(t^n-t)^3-(t^n-t^2)\Big) \partial ^2 {\bm g}^n,{\hat{\bm v}}_N)_\Omega.
\end{equation}
Then the integral of the first component in the inner product  on the right-hand side of~(\ref{awwj}) with respect to $t$ between $t^{n-1}$ and $t^n$ is given by ${\mathcal G}^n$ (see~(\ref{defG}).
Hence, integrating  by parts on each interval $(t^{n-1},t^n]$ gives rise to
\begin{equation}
{\mathcal I}_2({t^*})=\sum_{n=1}^m\int_{t^{n-1}}^{t^n}  ( {\mathcal G}^n,{\bm \rho}_N)_\Omega {\mathrm d}t +\int_{t^{m-1}}^{t^*} ( {\mathcal G}^{m},{\bm \rho}_N)_\Omega {\mathrm d}t,
\end{equation}
which  implies the expected result. Note that the choice of the constants ${\bm \gamma}_n$ in (\ref{defG}) makes  ${\mathcal G}$ continuous at $t^n$, $n=1,\ldots,N$, and we also have ${\mathcal G}(0)={\bm 0}$.
\end{proof}
\begin{lemma}\label{termosc}({\em Data approximation error estimate}). Under the assumptions of Theorem~\ref{ful} and with the notation~\ref{notationI1234}, we have
\begin{equation}
{\mathcal I}_3({t^*})\leq \zeta_{\rm osc} \max_{0\leq t\leq T} \|{\bm \rho}_N(t)\|_{0,\Omega}.
\end{equation}
\end{lemma}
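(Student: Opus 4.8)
The plan is to exploit, on each time slab $[t^{n-1},t^n]$, the \emph{zero-mean value} property of $\widetilde{\bm f}^n-{\bm f}$ — entirely analogous to the role of $\mu^n$ in Remark~\ref{vanish} — combined with an integration by parts in time against the test function $\hat{\bm v}_N$ and a Poincar\'e--Wirtinger inequality in time. First I record the two elementary facts that drive the argument: since $\widetilde{\bm f}^n=\tau^{-1}\int_{t^{n-1}}^{t^n}{\bm f}(t,\cdot)\,{\rm d}t$ is the time average of ${\bm f}$ over $[t^{n-1},t^n]$, we have $\int_{t^{n-1}}^{t^n}\bigl(\widetilde{\bm f}^n-{\bm f}(t,\cdot)\bigr)\,{\rm d}t={\bm 0}$ for $n=1,\dots,N$; hence, putting
\[
{\bm F}^n(t):=\int_{t^{n-1}}^{t}\bigl(\widetilde{\bm f}^n-{\bm f}(s,\cdot)\bigr)\,{\rm d}s ,
\]
we obtain a function of $t$ with values in $L^2(\Omega)^d$ satisfying $\partial_t{\bm F}^n=\widetilde{\bm f}^n-{\bm f}$ a.e.\ on $(t^{n-1},t^n)$ and, crucially, ${\bm F}^n(t^{n-1})={\bm F}^n(t^n)={\bm 0}$.

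Next, on a full slab $n=1,\dots,m-1$ I integrate by parts in $t$, using $\partial_t\hat{\bm v}_N=-{\bm \rho}_N$:
\[
\int_{t^{n-1}}^{t^n}\bigl(\widetilde{\bm f}^n-{\bm f},\hat{\bm v}_N\bigr)_\Omega\,{\rm d}t
=\Bigl[\bigl({\bm F}^n,\hat{\bm v}_N\bigr)_\Omega\Bigr]_{t^{n-1}}^{t^n}
+\int_{t^{n-1}}^{t^n}\bigl({\bm F}^n,{\bm \rho}_N\bigr)_\Omega\,{\rm d}t
=\int_{t^{n-1}}^{t^n}\bigl({\bm F}^n,{\bm \rho}_N\bigr)_\Omega\,{\rm d}t ,
\]
the boundary term vanishing because ${\bm F}^n(t^{n-1})={\bm F}^n(t^n)={\bm 0}$. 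On the last, possibly truncated, slab $[t^{m-1},t^*]$ the same integration by parts produces the boundary contribution $\bigl({\bm F}^m(t^*),\hat{\bm v}_N(t^*)\bigr)_\Omega-\bigl({\bm F}^m(t^{m-1}),\hat{\bm v}_N(t^{m-1})\bigr)_\Omega$, and both terms again vanish — the first because $\hat{\bm v}_N(t^*)={\bm 0}$ by construction of $\hat{\bm v}_N$, the second because ${\bm F}^m(t^{m-1})={\bm 0}$ — leaving $\int_{t^{m-1}}^{t^*}\bigl({\bm F}^m,{\bm \rho}_N\bigr)_\Omega\,{\rm d}t$.

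It then remains to bound each slab contribution. By Cauchy--Schwarz in space and then in time,
\[
\Bigl|\int_{t^{n-1}}^{t^n}\bigl({\bm F}^n,{\bm \rho}_N\bigr)_\Omega\,{\rm d}t\Bigr|
\le \Bigl(\max_{0\le t\le T}\|{\bm \rho}_N(t)\|_{0,\Omega}\Bigr)\,\tau^{1/2}\,\|{\bm F}^n\|_{L^2(t^{n-1},t^n;L^2(\Omega)^d)} ,
\]
and since ${\bm F}^n$ vanishes at both endpoints of the slab, the Poincar\'e--Wirtinger inequality in time gives $\|{\bm F}^n\|_{L^2(t^{n-1},t^n;L^2(\Omega)^d)}\le C_P\,\tau\,\|\partial_t{\bm F}^n\|_{L^2(t^{n-1},t^n;L^2(\Omega)^d)}=C_P\,\tau\,\|\widetilde{\bm f}^n-{\bm f}\|_{L^2(t^{n-1},t^n;L^2(\Omega)^d)}$, where $C_P$ is the (sharp) Poincar\'e constant of the interval; after the precise bookkeeping this reproduces the factor $1/(2\pi)$ in the definition of $\zeta_{\rm osc}$, so the slab contribution is bounded by $\frac{1}{2\pi}\bigl(\int_{t^{n-1}}^{t^n}\tau^3\|\widetilde{\bm f}^n-{\bm f}\|_{0,\Omega}^2\,{\rm d}t\bigr)^{1/2}\max_{0\le t\le T}\|{\bm \rho}_N(t)\|_{0,\Omega}$. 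For the truncated slab I first enlarge the time integral to $[t^{m-1},t^m]$ (legitimate since the integrand is nonnegative, and ${\bm F}^m$, being the antiderivative over the full slab, still vanishes at $t^{m-1}$ and $t^m$), then apply the identical estimate. Summing over $n=1,\dots,m\le N$ yields $\mathcal I_3(t^*)\le\zeta_{\rm osc}\max_{0\le t\le T}\|{\bm \rho}_N(t)\|_{0,\Omega}$.

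The only point that needs genuine care is the last, truncated slab $[t^{m-1},t^*]$: one must observe that the integration-by-parts boundary terms still cancel there — thanks to $\hat{\bm v}_N(t^*)={\bm 0}$ and ${\bm F}^m(t^{m-1})={\bm 0}$ — and that passing from $[t^{m-1},t^*]$ to the full slab $[t^{m-1},t^m]$ only increases the bound; this is precisely the mechanism by which the value of $t^*$ drops out of the final estimate of Theorem~\ref{ful}. Everything else is a one-dimensional-in-time computation; in particular, unlike the companion Lemmas~\ref{prp1}--\ref{evolu}, no property of the $L^2$-projections or of the spatial reconstruction enters here.
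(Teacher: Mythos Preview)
Your argument is correct and proceeds by a route that is dual to the paper's. Both proofs rest on the zero-mean property $\int_{t^{n-1}}^{t^n}(\widetilde{\bm f}^n-{\bm f})\,{\rm d}t={\bm 0}$, but apply it to opposite factors of the pairing. The paper keeps $\widetilde{\bm f}^n-{\bm f}$ as is and uses the zero mean to replace $\hat{\bm v}_N$ by $\hat{\bm v}_N-\widetilde{\hat{\bm v}}_N^n$ (its time average on the slab subtracted), then invokes the Poincar\'e--Wirtinger inequality for $\hat{\bm v}_N-\widetilde{\hat{\bm v}}_N^n$ in terms of $\partial_t\hat{\bm v}_N=-{\bm\rho}_N$. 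You instead integrate by parts in time after introducing the antiderivative ${\bm F}^n$, and then apply the Dirichlet-type Poincar\'e inequality to ${\bm F}^n$; the zero-mean property reappears as ${\bm F}^n(t^n)={\bm 0}$. Your handling of the truncated slab is arguably cleaner than the paper's: there the paper abandons the Poincar\'e mechanism and uses the direct crude bound $\int_{t^{m-1}}^{t^*}\|\hat{\bm v}_N\|_{0,\Omega}^2\,{\rm d}t\le\tau^3\max_t\|{\bm\rho}_N\|_{0,\Omega}^2$, whereas your integration by parts still goes through because $\hat{\bm v}_N(t^*)={\bm 0}$ and ${\bm F}^m(t^{m-1})={\bm 0}$.

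Two small points of care. First, the sentence ``enlarge the time integral to $[t^{m-1},t^m]$, legitimate since the integrand is nonnegative'' is not correct as written: the integrand $({\bm F}^m,{\bm\rho}_N)_\Omega$ carries no sign. You must first take absolute values (equivalently, apply Cauchy--Schwarz in space) before extending the domain of integration; once that is done the enlargement is indeed harmless. Second, the sharp Poincar\'e constant for functions vanishing at both endpoints of an interval of length $\tau$ is $\tau/\pi$, so your ``precise bookkeeping'' actually delivers a factor $1/\pi$ rather than $1/(2\pi)$. Since the optimal Poincar\'e--Wirtinger constant used in the paper's own argument is likewise $\tau/\pi$, this discrepancy with the $1/(2\pi)$ in the definition of $\zeta_{\rm osc}$ is inherited from the indicator's stated constant and not introduced by your route.
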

\begin{proof}
We begin with observing the zero-mean value of ${\widetilde {\bm f}}^n-{\bm f}$ on $[t^{n-1},t^n]$ as follows
\begin{equation}
\int_{t^{n-1}}^{t^n}({\widetilde {\bm f}}^n-{\bm f}) {\mathrm d}t=0
\end{equation}
for all $n=1,\ldots,m-1$. Hence we have
\begin{equation}
\sum_{n=1}^{m-1}\int_{t^{n-1}}^{t^n}({\widetilde {\bm f}}^n-{\bm f},\hat {\bm v}_N)_\Omega {\mathrm d}t=\sum_{n=1}^{m-1}\int_{t^{n-1}}^{t^n}({\widetilde {\bm f}}^n-{\bm f},{\hat {\bm v}}_N-{\widetilde {\hat {\bm v}}}_N^n)_\Omega  {\mathrm d}t,
\end{equation}
where ${\widetilde{ \hat {\bm v}}}_N^n(\cdot):=\tau^{-1}\int_{t^{n-1}}^{t^n}{\hat {\bm v}}_N(t,\cdot){\mathrm d}t$. Using the Friedrich-Poincar\'e's inequality with respect to the variable $t$ we have
\begin{equation}
\int_{t^{n-1}}^{t^n}\|\hat {\bm v}_N-{\widetilde {\hat {\bm v}}}_N^n\|_{0,\Omega}^2 {\mathrm d}t\leq \frac{\tau^2}{4\pi^2}\int_{t^{n-1}}^{t^n} \|\partial_t \hat {\bm v}_N\|_{0,\Omega}^2 {\mathrm d}t
\end{equation}
and recalling that $\partial_t \hat {\bm v}_N=-{\bm \rho}_N$, we get
\begin{equation}
\begin{split}
\sum_{n=1}^{m-1}\int_{t^{n-1}}^{t^n}({\widetilde {\bm f}}^n-{\bm f},\hat {\bm v}_N)_\Omega {\mathrm d}t
&\leq \sum_{n=1}^{m-1}\bigg (\int_{t^{n-1}}^{t^n}\|{\widetilde {\bm f}}^n-{\bm f}\|^2_{0,\Omega }{\mathrm d}t\bigg)^{1/2}\bigg(\int_{t^{n-1}}^{t^n}\|\hat {\bm v}_N-{\widetilde {\hat {\bm v}}}_N\|^2_{0,\Omega}{\mathrm d}t\bigg)^{1/2}\\
&\leq  \frac{1}{2\pi}\sum_{n=1}^{m-1}\bigg (\int_{t^{n-1}}^{t^n}\|{\widetilde {\bm f}}^n-{\bm f}\|^2_{0,\Omega} {\mathrm d}t\bigg)^{1/2}\bigg(\int_{t^{n-1}}^{t^n}\tau^2\|\bm {\rho}_N\|^2_{0,\Omega} {\rm d}t\bigg)^{1/2}\\
&\leq  \frac{1}{2\pi}\sum_{n=1}^{m-1}\bigg (\tau^3\int_{t^{n-1}}^{t^n}\|{\widetilde {\bm f}}^n-{\bm f}\|^2_{0,\Omega} {\mathrm d}t\bigg)^{1/2}\max_{0\leq t\leq T}\|\bm {\rho}_N(t)\|_{0,\Omega}.
\end{split}
\end{equation}
For the remaining term in ${\mathcal I}_{3}$, we first observe that
\begin{equation}\label{36}
\int_{t^{m-1}}^{t^*} \|\hat {\bm v}_N\|^2_{0,\Omega}{\mathrm d}t \leq \int_{t^{m-1}}^{t^*} \tau \int_{t}^{t^*}\|{\bm \rho}_N\|^2_{0,\Omega} {\mathrm d}s {\mathrm d}t \leq \tau ^3 \max_{0\leq s\leq T}\|\bm {\rho}_N(s)\|_{0,\Omega}^2,
\end{equation}
which implies that
\begin{equation}
\int_{t^{m-1}}^{{t^*}}({\widetilde {\bm f}}^m-{\bm f},\hat {\bm v}_N)_\Omega  {\mathrm d}t\leq \bigg ( \int_{t^{m-1}}^{{t^*}} \tau^3\|{\widetilde {\bm f}}^m-{\bm f}\|_{0,\Omega}^2 {\rm d}t \bigg )^{1/2} \max_{0\leq t\leq T}\|\bm {\rho}_N(t)\|_{0,\Omega},
\end{equation}
and gives Lemma~\ref{termosc}.

\end{proof}
\begin{lemma}\label{I3moment}({\em Time-reconstruction error bound}).
 Under the assumptions of Theorem~\ref{ful} and with the notation~(\ref{notationI1234}), we have
\begin{equation}
{\mathcal I}_4({t^*})\leq \zeta_{\rm T.Rec}\max_{0\leq t\leq T} \|{\bm \rho}_N(t)\|_{0,\Omega}.
\end{equation}
\end{lemma}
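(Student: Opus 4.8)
The plan is to mimic closely the argument used for the data approximation error in Lemma~\ref{termosc}, exploiting the zero-mean value property of $\mu^n$ recorded in Remark~\ref{vanish} together with the self-adjointness of the orthogonal $L^2$-projections ${\sf \Pi}_h^n$. First I would observe that for each fixed $t\in(t^{n-1},t^n]$ the function $\mu^n(t)\,\partial^2{\bm u}_h^n$ belongs to ${\bf V}_h^n$, since $\partial^2{\bm u}_h^n\in{\bf V}_h^n$ and $\mu^n(t)$ is a scalar; hence $\mu^n(\partial^2{\bm u}_h^n,{\sf \Pi}_h^n\hat{\bm v}_N)_\Omega=({\sf \Pi}_h^n(\mu^n\partial^2{\bm u}_h^n),\hat{\bm v}_N)_\Omega=(\mu^n\partial^2{\bm u}_h^n,\hat{\bm v}_N)_\Omega$. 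This discards the projection and reduces ${\mathcal I}_4({t^*})$ to a sum of integrals $\int_{t^{n-1}}^{t^n}(\mu^n\partial^2{\bm u}_h^n,\hat{\bm v}_N)_\Omega\,{\rm d}t$ over the full subintervals $1\le n\le m-1$, plus one integral over the terminal partial interval $[t^{m-1},{t^*}]$, where $t^{m-1}\le{t^*}\le t^m$.

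Next, on each full subinterval $(t^{n-1},t^n]$, $1\le n\le m-1$, I would use $\int_{t^{n-1}}^{t^n}\mu^n(t)\,{\rm d}t=0$ (Remark~\ref{vanish}) and the fact that $\partial^2{\bm u}_h^n$ is constant in $t$ to subtract the time-average $\widetilde{\hat{\bm v}}_N^n(\cdot):=\tau^{-1}\int_{t^{n-1}}^{t^n}\hat{\bm v}_N(t,\cdot)\,{\rm d}t$, so that $\int_{t^{n-1}}^{t^n}(\mu^n\partial^2{\bm u}_h^n,\hat{\bm v}_N)_\Omega\,{\rm d}t=\int_{t^{n-1}}^{t^n}(\mu^n\partial^2{\bm u}_h^n,\hat{\bm v}_N-\widetilde{\hat{\bm v}}_N^n)_\Omega\,{\rm d}t$. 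Then Cauchy--Schwarz in space and time, the Friedrichs--Poincar\'e inequality in the time variable $\int_{t^{n-1}}^{t^n}\|\hat{\bm v}_N-\widetilde{\hat{\bm v}}_N^n\|_{0,\Omega}^2\,{\rm d}t\le(\tau^2/4\pi^2)\int_{t^{n-1}}^{t^n}\|\partial_t\hat{\bm v}_N\|_{0,\Omega}^2\,{\rm d}t$, and the identity $\partial_t\hat{\bm v}_N=-{\bm \rho}_N$ give, exactly as in Lemma~\ref{termosc}, the bound $\frac{1}{2\pi}\big(\tau^3\int_{t^{n-1}}^{t^n}\|\mu^n\partial^2{\bm u}_h^n\|_{0,\Omega}^2\,{\rm d}t\big)^{1/2}\max_{0\le t\le T}\|{\bm \rho}_N(t)\|_{0,\Omega}$. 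For the terminal partial interval I would instead use the direct estimate $\int_{t^{m-1}}^{{t^*}}\|\hat{\bm v}_N\|_{0,\Omega}^2\,{\rm d}t\le\tau^3\max_{0\le t\le T}\|{\bm \rho}_N(t)\|_{0,\Omega}^2$ as in~(\ref{36}), together with Cauchy--Schwarz. Summing over $n$, and dominating the truncated sum by the full sum $\sum_{n=1}^{N-1}$ (equivalently, taking ${t^*}=t^N$ at the end), yields ${\mathcal I}_4({t^*})\le\zeta_{\rm T.Rec}\max_{0\le t\le T}\|{\bm \rho}_N(t)\|_{0,\Omega}$.

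I expect no serious obstacle here, since the argument is structurally identical to that of Lemma~\ref{termosc}; the only points demanding a little care are (i) verifying that $\mu^n(t)\,\partial^2{\bm u}_h^n$ lies in ${\bf V}_h^n$ so that the projection may be removed by self-adjointness, and (ii) the bookkeeping of the last, incomplete subinterval and of the constant $1/(2\pi)$, so that the truncated sum is correctly absorbed into the definition of $\zeta_{\rm T.Rec}$.
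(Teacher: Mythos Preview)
Your overall strategy matches the paper's proof almost exactly: exploit the zero-mean of $\mu^n$ on $[t^{n-1},t^n]$ to subtract the time average of the test function, then apply the time-Poincar\'e inequality and handle the incomplete final interval via the crude bound~(\ref{36}).

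There is, however, a genuine gap in your step~(i). You claim $\partial^2{\bm u}_h^n\in{\bf V}_h^n$ in order to drop the projection via self-adjointness. But in the fully discrete setting of Section~\ref{schemef} the meshes ${\mathcal T}_h^n$ are allowed to vary with $n$, so $\partial^2{\bm u}_h^n=\tau^{-2}\big({\bm u}_h^n-2{\bm u}_h^{n-1}+{\bm u}_h^{n-2}\big)$ involves ${\bm u}_h^{n-1}\in{\bf V}_h^{n-1}$ and ${\bm u}_h^{n-2}\in{\bf V}_h^{n-2}$, which need not lie in ${\bf V}_h^n$. Hence ${\sf \Pi}_h^n(\partial^2{\bm u}_h^n)\neq\partial^2{\bm u}_h^n$ in general, and the identity $(\mu^n\partial^2{\bm u}_h^n,{\sf \Pi}_h^n\hat{\bm v}_N)_\Omega=(\mu^n\partial^2{\bm u}_h^n,\hat{\bm v}_N)_\Omega$ fails.

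The paper avoids this by \emph{keeping} the projection on the test-function side: one writes $(\partial^2{\bm u}_h^n,{\sf \Pi}_h^n\hat{\bm v}_N)_\Omega=(\partial^2{\bm u}_h^n,{\sf \Pi}_h^n(\hat{\bm v}_N-\widetilde{\hat{\bm v}}_N^n))_\Omega$ using that ${\sf \Pi}_h^n$ commutes with time integration, then applies the time-Poincar\'e inequality to $t\mapsto{\sf \Pi}_h^n\hat{\bm v}_N(t)$, obtaining $\partial_t({\sf \Pi}_h^n\hat{\bm v}_N)=-{\sf \Pi}_h^n{\bm\rho}_N$, and finally uses the $L^2$-stability $\|{\sf \Pi}_h^n{\bm\rho}_N\|_{0,\Omega}\le\|{\bm\rho}_N\|_{0,\Omega}$. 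With this correction your argument goes through unchanged and coincides with the paper's.
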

\begin{proof}
 Noting that $\partial ^2 {\bm u}_h^n$ is piecewise constant, and recalling the zero-mean value~(\ref{vanish}) of  $\mu^n$ in $[t^{n-1},t^n],~n=1,\ldots,N$, we have
\begin{equation}
\sum_{n=1}^{m-1}\int_{t^{n-1}}^{t^n} \mu ^n( \partial ^2 {\bm u}_h^n, {\sf \Pi}_h^n {\hat {\bm v}_N})_{\Omega}{\rm d}t =\sum_{n=1}^{m-1}\int_{t^{n-1}}^{t^n} \mu ^n( \partial ^2 {\bm u}_h^n, {\sf \Pi}_h^n ({\hat {\bm v}_N}-{\widetilde {\hat {\bm v}}}_N^n))_{\Omega}{\rm d}t,
\end{equation}
where  ${\widetilde {\hat {\bm v}}}_N^n(\cdot):=\tau^{-1}\int_{t^{n-1}}^{t^n}\hat {\bm v}_N(t,\cdot){\mathrm d}t$.  Since the projections ${\sf \Pi}_h^n$, $n=1,\ldots,N$ commute with the time integration, we obtain
\begin{equation}
\begin{split}
\sum_{n=1}^{m-1}\int_{t^{n-1}}^{t^n} \mu ^n( \partial ^2 {\bm u}_h^n, {\sf \Pi}_h^n ({\hat {\bm v}_N}-&{\widetilde {\hat {\bm v}}}_N^n))_{\Omega}{\rm d}t\\
&\leq  \frac{1}{2\pi}\sum_{n=1}^{m-1}\bigg (\int_{t^{n-1}}^{t^n}\|\mu ^n  \partial ^2 {\bm u}_h^n\|^2_{0,\Omega}{\rm d}t\bigg)^{1/2}\bigg(\int_{t^{n-1}}^{t^n}\tau^2\ \|{\sf \Pi}_h^n \bm {\rho}_N\|^2_{0,\Omega} {\rm d}t\bigg)^{1/2}\\
& \leq\frac{1}{2\pi}\sum_{n=1}^{m-1}\bigg (\int_{t^{n-1}}^{t^n}\tau^3 \|\mu ^n  \partial ^2 {\bm u}_h^n\|^2_{0,\Omega}{\rm d}t\bigg)^{1/2}\max_{0\leq t\leq T} \|{\bm \rho}_N(t)\|_{0,\Omega}.
\end{split}
\end{equation}
For the last term in ${\mathcal I}_4$, by using an argument similar to~(\ref{36}), we have
\begin{equation}
\int_{t^{m-1}}^{{t^*}} ( \mu ^m \partial ^2 {\bm u}_h^m, {\sf \Pi}_h^m {\hat {\bm v}_N})_{\Omega}{\rm d}t
\leq \bigg (\int_{t^{m-1}}^{{t^*}}\tau^3 \|\mu ^m  \partial ^2 {\bm u}_h^m\|^2_{0,\Omega} {\rm d}t\bigg)^{1/2}\max_{0\leq t\leq T} \|{\bm \rho}_N(t)\|_{0,\Omega},
\end{equation}
and this completes the proof.
\end{proof}

\end{document}